\theoremstyle{definition}
\newtheorem{theorem}{Theorem}
\newtheorem{corollary}[theorem]{Corollary}
\newtheorem{lemma}[theorem]{Lemma}
\newtheorem{definition}{Definition}
\newtheorem{remark}{Remark}
\newtheorem{assumption}{Assumption}
\newcommand{\R}{\mathbb{R}}
\newcommand{\E}{\mathbb{E}}
\newcommand{\lb}{\langle}
\newcommand{\rb}{\rangle}
\newcommand{\argmin}{\text{argmin}}
\title{Conditional Density Estimation, Latent Variable Discovery and Optimal Transport}
\date{July 1, 2020}
\author{Hongkang Yang, Esteban G. Tabak}
\begin{document}
\maketitle

\begin{abstract}
A framework is proposed that addresses both conditional density estimation and latent variable discovery.
The objective function maximizes explanation of variability in the data,  achieved through the optimal transport barycenter generalized to a collection of conditional distributions indexed by a covariate---either given or latent---in any suitable space. Theoretical results establish the existence of barycenters, a minimax formulation of optimal transport maps, and a general characterization of variability via the optimal transport cost. This framework leads to a family of non-parametric neural network-based algorithms, the BaryNet, with a supervised version that estimates conditional distributions and an unsupervised version that assigns latent variables. The efficacy of BaryNets is demonstrated by tests on both artificial and real-world data sets. A parallel drawn between autoencoders and the barycenter framework leads to the Barycentric autoencoder algorithm (BAE).
\end{abstract}

{\bf Keywords:} Unsupervised learning, optimal transport, neural network, autoencoders, factor discovery.

\medskip

{\bf AMS Subject classification:} 62H25, 62G07, 62M45, 49K30

\section{Introduction}

In machine learning, one often considers joint distributions of the form $\rho(x,z)$, where $x$ is an observable and $z$ some latent variable, or alternatively $z$ is the source and $x$ the target variable. For instance, in images of human faces, the data space $x\in X$ may have a dimension up to $10^5 \sim 10^6$ if counted in pixels, while a covariate $z \in Z$ consisting of the face orientation is only two-dimensional.

A task of broad applicability is to extract, given data drawn from $\rho(x, z)$, the conditional distributions $\rho(x|z)$. An example in medical studies has $\rho(x|z)$ representing the distribution of blood sugar level conditioned on a patient's age and diet. In generative modeling, $\rho(x|z)$ can represent the distribution of images conditioned on a text description such as $z$=``cat", and one seeks to generate samples from $\rho(x|z)$. A knowledge of $\rho(x|z)$ allows one to estimate the conditional expectation $\E_{\rho(x|z)}[f(x)]$ for any function $f$ of interest.

Alternatively, if one is only given the raw data $\rho(x)$, then one can try to infer a reasonable latent variable $z$ that underlies $x$. For instance, for the facial images, discovering $z$ as the face orientation explains away a great portion of the data's variability. Whether or not this latent variable has a clear interpretation, it can potentially facilitate data compression and generative modeling.

These two problems are known, respectively, as conditional density estimation and latent variable discovery.
The former can be seen as a probabilistic generalization of classification and regression, while the later contains as special cases clustering and dimensionality reduction.
They form a pair of supervised/unsupervised problems, such that one learns the dependency of $x$ on $z$, while the other discovers $z$. This paper formulates and solves both problems in a single framework based on optimal transport.

\subsection{Related work}
Existing methods for conditional density estimation generally follow one of two approaches: to directly model $\rho(x,z)$ using kernel smoothing techniques \cite{hyndman1996estimating,holmes2012fast}, or to model the mapping
$$z \mapsto \rho(x|z)$$
For instance, the Mixture Density Network \cite{bishop1994mixture} models $\rho(x|z)$ as a Gaussian mixture with $z$-dependent parameters, the Conditional GAN \cite{mirza2014conditional} models $\rho(x|z)$ by generative adversarial networks (GAN), Deep Conditional Generative Models \cite{sohn2015learning} use variational autoencoders (VAE), and \cite{Agnelli-et-al} and \cite{trippe2018conditional} use normalizing flows. Essentially, these methods apply density estimation techniques for a single distribution to the modeling of all conditional distributions simultaneously. We will introduce an alternative approach such that all $\rho(x|z)$ are represented by a single distribution $\mu$, from which we can easily recover each $\rho(x|z)$, so that we only need to estimate $\mu$.

Existing methods for latent variable discovery are vast and rich. For discrete latent variables $z$, the problem reduces to clustering, where popular methods include $k$-means and the EM algorithm \cite{alpaydin2009introduction,bishop2006pattern}. For continuous $z$, we have dimensionality reduction algorithms such as principal component analysis (PCA), principal curves and surfaces \cite{hastie2005elements}, and undercomplete autoencoders (also known as autoassociative neural networks) \cite{alpaydin2009introduction}. Depending on different regularizations on $z$, there are also the VAE \cite{kingma2013auto}, AAE \cite{makhzani2015adversarial}, WAE \cite{tolstikhin2017wasserstein}, and denoising and sparse autoencoders \cite{alpaydin2009introduction}.
We will identify below a parallelism between autoencoders and the algorithms that we propose.

Our theoretical model is based on optimal transport, in particular on the barycenter of probability measures. The idea of applying barycenters to conditional density estimation originates from \cite{tabak2018conditionaldensity}, while the application to latent variable discovery is based on the previous work in \cite{tabak2018explanation,yang2019clustering}. This paper lays the theoretical foundation for the technique of barycenters, and introduces several neural network-based algorithms.

\subsection{Sketch of our approach}

Intuitively, one of the principles of learning is to reduce uncertainty. Given arbitrary data, an effective way to learn it is to find a representation of it so that some measure of uncertainty is reduced. One prototypical example is the Kolmogorov complexity (or descriptive complexity) \cite{sipser2013introduction}: when the data consists of a string such as $ababababab$, it is natural to represent it by $ab\times 5$, so the variability of a long string is reduced to that of a shorter representative.
Another instance is PCA, which seeks a low-dimensional representation of high-dimensional data. It maximizes the amount of variance explained by the principal components, thus minimizing the uncertainty remaining.

Clustering provides a similar setting: suppose that we are given the data displayed on the left image of Figure \ref{fig: three clusters}, divided into three labeled clusters. We would naturally \emph{learn} the data by memorizing the clusters' common shape and their relative positions. Equivalently, as in the right image, the data can be represented by a common distribution plus the translations that bring the three clusters to it. As we apply these translations to transform the original data, the variance is greatly reduced.
\begin{figure}[H]
\centering
\subfloat{\includegraphics[scale=0.75]{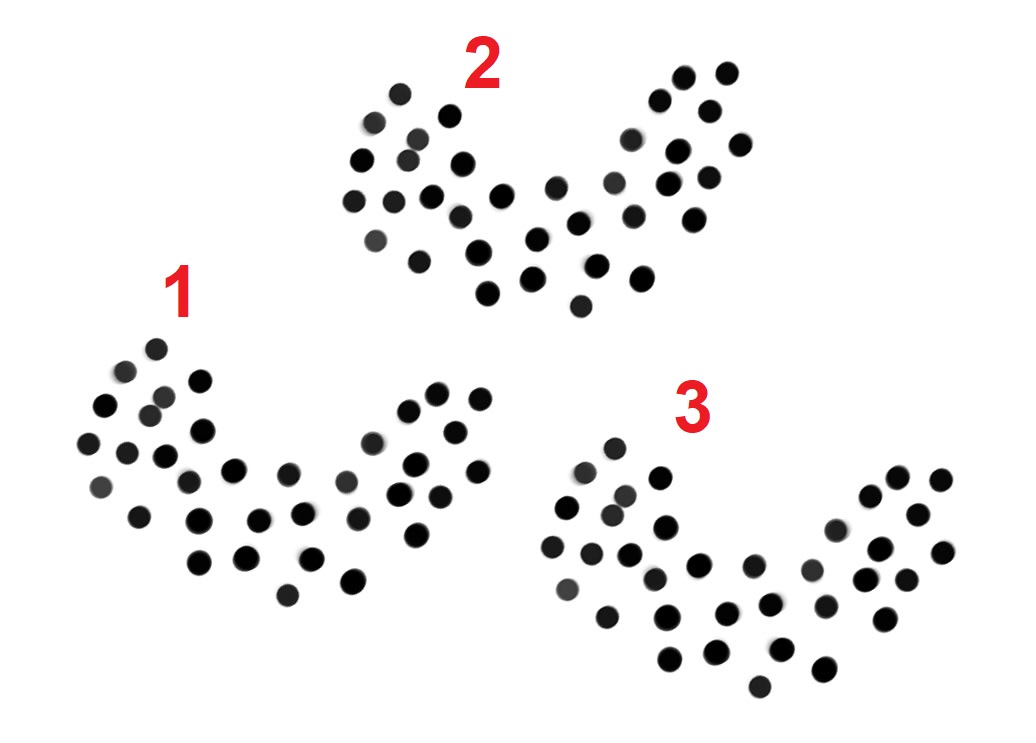}}
\subfloat{\includegraphics[scale=0.75]{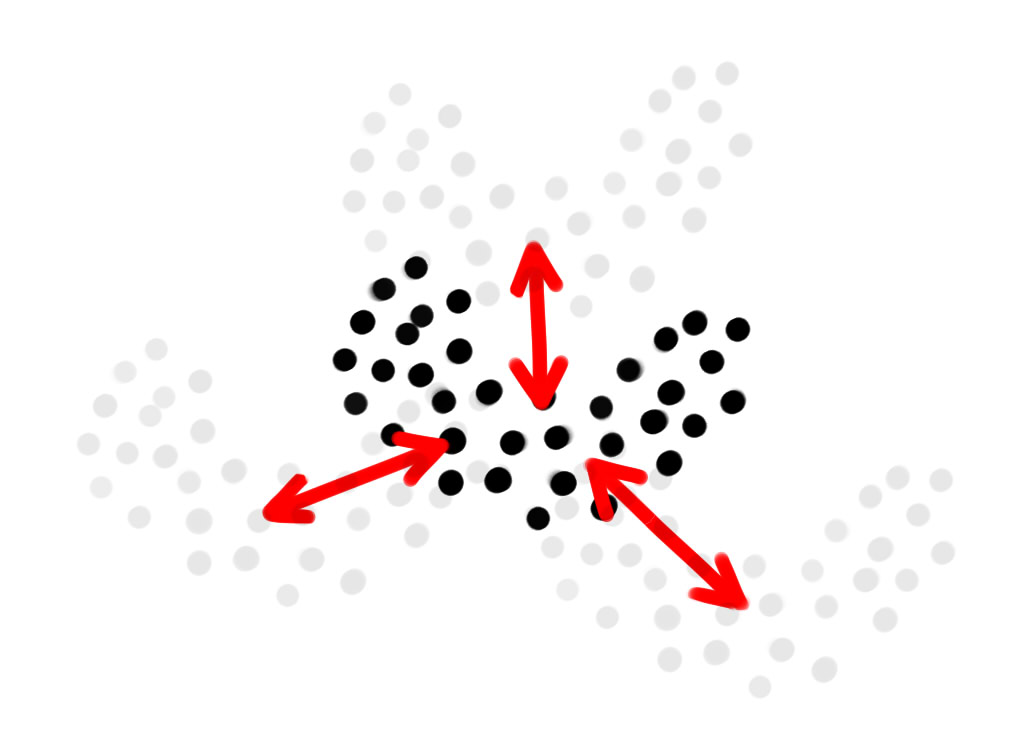}}
\caption{Clusters and their representative}
\label{fig: three clusters}
\end{figure}

This intuition can be summarized as follows: given a sample space $X$ such as $\R^2$ and a latent variable space $Z$ such as $\{1,2,3\}$, an effective way to learn a distribution $\rho(x,z) \in P(X\times Z)$ is to find a representative distribution $\mu$ with smaller variability, as well as the transformations between each conditional distribution $\rho(x|z)$ and $\mu$.

In a data-based scenario, we are given a labeled sample $\{x_i,z_i\}_{i=1}^N$. Once we obtain the transformations $T_z$ that send $\rho(x|z)$ to $\mu$ and their inverse transformations $S_z$, the representative $\mu$ can be estimated by aggregating all sample points: $\{T_{z_i}(x_i)\}$. Then, given any $z$, the conditional distribution $\rho(x|z)$ can be sampled via $\{S_z \circ T_{z_i}(x_i)\}$.

We can already see one advantage of this procedure: once the transformations are known, $N$ sample points for $\rho(x, z)$ automatically provide $N$ samples for each conditional $\rho(x|z)$. This is particularly helpful when there are many latent variables $z$ or when these are continuous, so that for most values of $z$, the conditional $\rho(x|z)$ has very few or zero samples in $\{x_i,z_i\}_{i=1}^N$. Furthermore, this procedure can be used in conjunction with other density estimation algorithms or generative models: the difficult task of modeling many $\rho(x|z)$ (or a high-dimensional $\rho(x,z)$) is simplified into modeling a single $\mu$, and one can, for instance,  first train the GAN or VAE on $\mu$ and then concatenate it with $S_z$ to model each $\rho(x|z)$.\\

The theory of optimal transportation is ideal for the formalization of the procedure above. Intuitively, the representative distribution $\mu$ should closely resemble each conditional $\rho(x|z)$, that is, $\mu$ is the minimizer of some average ``distance" between $\mu$ and $\rho(x,z) = \rho(x|z)\nu(z)$:
$$\mu = \argmin_{\tilde{\mu}} \int_Z \text{distance}\big(\tilde{\mu},\rho(x|z)\big) d\nu(z)$$
Thus, we refer to $\mu$ as the ``barycenter" of $\rho(x,z)$. The optimal transport cost (or ``Earth mover's distance") is a good candidate for distance, such that informally the distance between two distributions $\mu_1$ and $\mu_2$ is the minimum ``work" required to transport $\mu_1$ (thought of as a pile of sand) to $\mu_2$. When a cost function $c(x,y)$ is given (such as the Euclidean distance $\|x-y\|$), the optimal transport cost is
$$I_c(\mu_1,\mu_2) \approx \inf_{T} \int c(x,T(z)) d\mu_1,$$
where the infimum is taken over all maps $T$ that transport $\mu_1$ to $\mu_2$. If we consider $c(x,T(x))$ as a measure of pointwise distortion, then $I_c(\rho(x|z),\mu)$ is the distortion or information loss incurred on the original data.

Optimal transportation has several advantages. The optimal transport cost $I_c(\mu_1,\mu_2)$ depends on a user-specified cost $c$ and thus can directly incorporate task-specific information. In particular, if the cost is based on the metric of the space, then $I_c(\mu_1,\mu_2)$ reliably captures our intuitive sense of distance between distributions \cite{arjovsky2017wasserstein}, whereas other measures, such as the Kullback-Leibler divergence and total variation, fail when the distributions have disjoint supports. Also, given that optimal transport minimizes data distortion, it is natural to expect that $\mu_1$ can be easily recovered from its transported image $\mu_2$, that is, the transport maps $T_z$ are invertible. This is a useful property, since our procedure needs to transform back and forth between the conditionals $\rho(x|z)$ and the representative $\mu$.

The greatest advantage, however, is duality. The theory of optimal transport abounds with duality techniques, through which we convert optimization problems over probability measures to problems over functions, and vice versa. A general rule is that, being less restricted, functions are easier to model and optimize than probability distributions, and we perform this conversion whenever possible. The primal problem of conditional density estimation is often intractable, because it is difficult to model directly each of the possibly infinitely many conditionals $\rho(x|z)$. Yet, optimal transport duality converts the primal into an optimization over one transport map $T(x,z)$ and one discriminator $\psi(y,z)$, which can be more easily solved by methods such as neural networks.\\

The next step is to apply our principle of minimum uncertainty to latent factor discovery, the unsupervised counterpart to conditional density estimation. Recall that in the supervised setting with $\rho(x, z)$, our procedure reduces the uncertainty or ``variability" of $\rho(x)$ to the smaller variability of the representative (or barycenter) $\mu$. If one has the freedom to determine the labels $z$, then the variability of $\mu$ can be further reduced. Specifically, what matters is the choice of $\rho(x|z)$, whereas the labels $z$ by themselves are equivalent under permutations and we can arrange them into some prior distribution $v(z)$.

Thus, given an unlabeled data $\rho(x)$, factor discovery should seek a labeling $\rho(x,z)$ that minimizes the variability of its barycenter $\mu$, or equivalently,
$$\max_{\rho(x,z)} \text{Variability}(\rho(x)) - \text{Variability}(\mu).$$

If, for the dataset in Figure \ref{fig: three clusters}, one did not know the labels $\{1,2,3\}$, one could assign them. Clearly some labelings are better than others: in the worst scenario, the labels $\{1,2,3\}$ would be distributed uniformly within each cluster, and our procedure would yield a barycenter $\mu$ with the same shape and size as the original data, with no variability reduction at all.

We should define ``variability" in a way that generalizes variance, so that factor discovery can yield the obvious labeling of Figure \ref{fig: three clusters}. Also, variability should depend on the cost $c(x,y)$ in order to incorporate task-specific information. Intuitively, how much we learn is proportional to how much effort we spend learning, or equivalently,
\begin{equation*}
\text{Reduced uncertainty} = \text{Work}.
\end{equation*}
So we characterize ``variability" as a measurement that satisfies
\begin{equation}
\label{variability definition}
\text{Variability}(\rho(x)) - \text{Variability}(\mu) = \text{Total transport cost } \int I_c\big(\rho(x|z),\mu\big) d\nu(z).
\end{equation}
In fact, Corollary \ref{cor: variability is variance} below shows that definition (\ref{variability definition}) yields exactly the variance when we use the squared Euclidean distance cost $c=\|x-y\|^2$. Hence, factor discovery becomes
$$\max_{\rho(x,z)} \text{Total transport cost},$$
which differs from conditional density estimation only by the additional maximization.\\

This paper is structured as follows. Section \ref{sec: theoretical foundation} develops the ideas presented in the introduction, formulating conditional density estimation and latent factor discovery in the framework of optimal transport barycenters. Section \ref{sec: algorithmic design} addresses the algorithmic aspects, proposing the supervised and unsupervised BaryNet algorithms, which use neural networks. It also discusses BaryNet's relation to existing methods, in particular the autoencoders, and introduces the Barycentric autoencoder (BAE) based on BaryNet. Section \ref{sec: test results} tests the performance of the BaryNet algorithms on real-world and artificial data sets, and verifies that they can reliably solve conditional density estimation and latent factor discovery. Finally, Section \ref{sec: conclusion} summarizes the results and discusses possible future work. The proofs of most theorems are provided in an appendix.

\section{Theoretical foundation}
\label{sec: theoretical foundation}
The ideas presented in the introduction are formalized and proved in this section. We first define optimal transport barycenter and prove its existence. Then, we obtain the conditional transport map $T(x,z)$ from a minimax problem. Finally, we prove the variance decomposition theorem and justify our definitions of variability and latent factor discovery.

\subsection{Preliminaries}
We denote by $X$ and $Z$ the sample and latent variable spaces, and by $Y$ the space that the barycenter $\mu$ belongs to. In practice one often has $X=Y$, but this is not required here.

Most of our results will be presented with $(X,Y, Z)$ Polish spaces, which are complete separable metric spaces. These have enough structure to handle problems of optimal transport, while they are general enough to include most spaces in real-world applications, such as Euclidean spaces $\R^d$, closed subsets of $\R^d$, complete Riemannian manifolds $M^d$, discrete sets such as $\{1,\dots K\}$, and function spaces such as $C([0,1]),P(\R^d)$.

Given a Polish space $X$, we denote the space of continuous functions by $C(X)$, the space of bounded continuous functions by $C_b(X)$ and the space of Borel probability measures by $P(X)$.

For clarity, we sometimes write a measure $\rho \in P(X)$ informally as $\rho(x)$ to indicate the space it belongs to, not implying by this that $\rho$ has a density function, unless explicitly declared. For joint probability measures, e.g. $\pi \in P(X\times Y\times Z)$, we denote its marginals by $\pi_X,\pi_{YZ}$, etc. The tensor product of probability measures $\mu$ and $\nu$ is denoted by $\mu \otimes \nu$.

Given $\rho(x,z) \in P(X\times Z)$, we define the conditional distributions $\rho(x|z)\nu(z) = \rho(x,z)$ using the disintegration theorems \cite{chang1997conditioning}. The conditional $\rho(x|z)$ always exists as a Borel measurable map from $Z$ to $P(X)$ in the topology of weak convergence, and it is unique $\nu(z)$-almost surely. Conversely, given $\nu(z)$ and a measurable $\rho(x|z)$, we define the joint distribution $\rho(x,z):=\rho(x|z)\nu(z)$ by
$$\forall \psi \in C_b(X\times Z), ~\int \psi d\rho(x,z) := \int \psi d\rho(x|z)d\nu(z)$$

\subsection{Optimal transport and barycenter}

A map $T: X\to Y$ pushes-forward $\rho \in P(X)$ to $\mu \in P(Y)$ (denoted $T\#\rho = \mu$) if
$$\mu(A) = \rho(T^{-1}(A))$$
for all measurable subsets $A \subseteq X$. Monge's original formulation of optimal transport  \cite{monge1781memoire}:
\begin{equation*}
\inf_{T\#\rho=\mu} \int_X c(x,T(x)) d\rho(x)
\end{equation*}
minimizes over all transport maps $T$ the expected value of a cost function $c$ on $X\times Y$. Kantorovich \cite{kantorovich1942translocation} generalized the transport maps to probabilistic couplings,
$$\Pi(\rho,\mu) := \{ \pi \in P(X\times Y), ~\pi_X = \rho, \pi_Y = \mu \},$$
relaxing the optimal transport problem to
\begin{equation}
\label{Kantorovich OT}
I_c(\rho,\mu) = \inf_{\pi \in \Pi(\rho,\mu)} \int_{X\times Y} c(x,y) d\pi(x,y).
\end{equation}
If the minimum of (\ref{Kantorovich OT}) is achieved by some coupling $\pi$, we call it an optimal transport plan, or a Kantorovich solution. If $\pi$ is concentrated on the graph of some function $T:X\to Y$, then $T$ is called an optimal transport map, or a Monge solution.\\

Inspired by definitions from \cite{tabak2018conditionaldensity} and \cite{kim2017wasserstein}, we define optimal transport barycenter as follows:

\begin{definition}[Barycenter problem]
Given a cost function $c(x,y)$, a labeled distribution $\rho(x,z) = \rho(x|z)\nu(z) \in P(X\times Z)$, and any $\mu \in P(Y)$, the \textit{total transport cost} between $\rho(x,z)$ and $\mu$ is defined by
\begin{equation}
\label{total transport cost}
I_c(\rho(x,z),\mu) = \int_Z I_c(\rho(x|z),\mu) d\nu(z)
\end{equation}
If the minimum total transport cost
$$\inf_{\mu \in P(Y)} I_c(\rho(x,z),\mu)$$
is achieved by some $\mu$, then we call it the \textit{barycenter} of $\rho(x,z)$.
\end{definition}

The notion of a barycenter of finitely many conditionals $\rho(x|z)$ (that is, with finite label space $Z=\{1,\dots K\}$) was introduced in \cite{carlier2010matching,chiappori2010hedonic,rabin2011wasserstein}, and its existence, uniqueness, and regularity were examined in \cite{agueh2011barycenters,pass2012multi,kim2017wasserstein}. Barycenters of infinitely many conditional distributions are studied in \cite{pass2013optimal,kim2017wasserstein}, which deal with the special case when $X$ is either a Euclidean space or a compact Riemannian manifold and $c$ is the squared distance cost.

We show that the barycenter problem as defined above is well-posed, and the barycenter exists under general conditions:

\begin{theorem}[Well-posedness and Existence of Barycenter]
\label{thm: well-posedness and existence}
Let $X,Y,Z$ be Polish spaces, let $c \in C(X\times Y)$ be a continuous cost that is bounded below ($\inf c > -\infty$), and let $\rho(x,z) = \rho(x|z)\nu(z) \in P(X\times Z)$ be a probability measure. Then,
\begin{enumerate}
\item Given any $\mu \in P(Y)$, the total transport cost (\ref{total transport cost}) is well-defined, and
\begin{equation}
\label{joint measure formulation of total transport cost}
\int_{Z} I_c(\rho(\cdot|z),\mu) d\nu(z) =
\min_{\substack{\pi \in P(X\times Y\times Z)\\ \pi_{XZ} = \rho\\ \pi_{YZ} = \mu \otimes v}}
\int_{X\times Y\times Z} c(x,y) d\pi(x,y,z),
\end{equation}
so there exists a Kantorovich solution in the form $\pi \in P(X\times Y\times Z)$.

\item If Assumption \ref{assumption: coercive and Heine Borel} from Appendix \ref{appendix: assumptions} holds, then there exists a barycenter $\mu \in P(Y)$. Specifically,
\begin{align}
\label{joint measure formulation of barycenter problem}
\min_{\mu \in P(Y)}
\int_{Z} I_c(\rho(\cdot|z),\mu) d\nu(z)
=&
\min_{\substack{\pi \in P(X\times Y\times Z)\\ \pi_{XZ} = \rho\\ \pi_{YZ} = \pi_Y \otimes \pi_Z}}
\int_{X\times Y\times Z} c(x,y) d\pi(x,y,z),
\end{align}
and the marginal $\pi_Y$ of every solution $\pi$ is a barycenter.
\end{enumerate}
\end{theorem}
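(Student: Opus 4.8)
The plan is to reduce both claims to standard facts about Kantorovich's problem (\ref{Kantorovich OT}) on Polish spaces --- attainment of optimal plans, weak lower semicontinuity of $\pi \mapsto \int c\, d\pi$, and Prokhorov's compactness criterion --- combined with the disintegration theorem recalled in the Preliminaries and a measurable selection of optimal couplings.

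For Part 1, I would first check that the integrand $z \mapsto I_c(\rho(\cdot|z),\mu)$ on the left of (\ref{joint measure formulation of total transport cost}) is $\nu$-measurable, so that the integral is well-defined (possibly $+\infty$). This follows from Kantorovich duality: $I_c(\rho(\cdot|z),\mu)$ is a supremum over a fixed separable family of admissible pairs $(\phi,\psi)$ of the quantities $\int \phi\, d\rho(\cdot|z) + \int\psi\, d\mu$, each of which is a measurable function of $z$ because $z \mapsto \rho(\cdot|z)$ is Borel measurable into $P(X)$ with the weak topology. Next I would prove the inequality ``$\le$'' in (\ref{joint measure formulation of total transport cost}): any admissible $\pi$ has $\pi_Z = \nu$ (as $\pi_{XZ}=\rho$), so disintegrating $\pi = \pi(\cdot,\cdot|z)\,\nu(z)$ and using $\pi_{XZ}=\rho$, $\pi_{YZ}=\mu\otimes\nu$ shows $\pi(\cdot,\cdot|z)\in\Pi(\rho(\cdot|z),\mu)$ for $\nu$-a.e.\ $z$; then $\int c\, d\pi = \int\!\big(\int c\, d\pi(\cdot,\cdot|z)\big) d\nu(z) \ge \int I_c(\rho(\cdot|z),\mu)\, d\nu(z)$. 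For ``$\ge$'', for each $z$ the Kantorovich problem for $(\rho(\cdot|z),\mu)$ has a nonempty (and weakly compact) set $\mathrm{Opt}(z)\subseteq P(X\times Y)$ of optimal plans, since $c$ is continuous and bounded below and $X,Y$ are Polish; I would argue that $z\mapsto\mathrm{Opt}(z)$ is a measurable closed-valued multifunction, invoke the Kuratowski--Ryll-Nardzewski selection theorem to get a measurable $z\mapsto\pi_z\in\mathrm{Opt}(z)$, and set $\pi := \int_Z \pi_z\, d\nu(z)$, which is admissible and attains the left-hand side. This simultaneously establishes the displayed equality and the existence of a Kantorovich solution in $P(X\times Y\times Z)$.

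For Part 2, I would first obtain the identity (\ref{joint measure formulation of barycenter problem}) as a bookkeeping consequence of Part 1. The constraint $\pi_{YZ}=\pi_Y\otimes\pi_Z$ together with $\pi_{XZ}=\rho$ (hence $\pi_Z=\nu$) is exactly the union over $\mu\in P(Y)$ of the constraint sets $\{\pi_{XZ}=\rho,\ \pi_{YZ}=\mu\otimes\nu\}$ appearing in Part 1 (take $\mu=\pi_Y$), and every $\mu$ is realized, e.g.\ by $\pi=(\rho(\cdot|z)\otimes\mu)\,\nu(z)$; so the two optimization values coincide, and for any minimizer $\pi$ of the right-hand side, applying the Part 1 equality with $\mu=\pi_Y$ shows $\pi_Y$ is a barycenter. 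It remains to produce a minimizer. Take a minimizing sequence $\pi_n$ (we may assume the value is finite, otherwise Part 1 makes every $\mu$ a minimizer). The $X$- and $Z$-marginals of $\pi_n$ are fixed ($\rho_X$ and $\nu$), hence tight, and Assumption \ref{assumption: coercive and Heine Borel} --- coercivity of $c$ in $y$ plus the Heine--Borel property --- converts the uniform bound on $\int c\, d\pi_n$ into tightness of the $Y$-marginals $\mu_n=\pi_{n,Y}$; thus $\{\pi_n\}$ is tight on $X\times Y\times Z$ and, along a subsequence, $\pi_n\rightharpoonup\pi$. Lower semicontinuity of $\pi\mapsto\int c\, d\pi$ (continuous, bounded-below integrand) gives $\int c\, d\pi\le\liminf\int c\, d\pi_n$; the constraint $\pi_{XZ}=\rho$ passes to the limit by weak continuity of the marginal projections; and $\pi_{n,YZ}=\mu_n\otimes\nu\rightharpoonup\pi_Y\otimes\nu=\pi_Y\otimes\pi_Z$ (since $\mu_n\rightharpoonup\pi_Y$) forces $\pi_{YZ}=\pi_Y\otimes\pi_Z$. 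Hence $\pi$ is admissible and optimal, and $\pi_Y$ is a barycenter; the same reasoning applies to any minimizer, giving the last assertion.

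The main obstacle I anticipate is the measurable selection step in Part 1: verifying that $z\mapsto\mathrm{Opt}(z)$ is a measurable (Borel-graph, closed-valued) multifunction into $P(X\times Y)$, which requires combining the joint lower semicontinuity of $(\sigma,\pi)\mapsto\int c\, d\pi$ in the marginal pair and the plan with the continuity of $z\mapsto\rho(\cdot|z)$. An alternative that partially sidesteps it is to minimize $\int c\, d\pi$ directly over the weakly compact convex set $\{\pi_{XZ}=\rho,\ \pi_{YZ}=\mu\otimes\nu\}$, obtaining attainment of the glued minimum for each fixed $\mu$; one is then left only to argue that the disintegration of such a minimizer is fiberwise optimal. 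The second genuinely nontrivial point is the tightness of $\{\mu_n\}$ in Part 2, which is precisely what Assumption \ref{assumption: coercive and Heine Borel} (see Appendix \ref{appendix: assumptions}) is designed to supply; everything else is the standard direct method in the calculus of variations.
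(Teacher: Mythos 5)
Your proposal is correct and, for Part 1, is essentially the paper's own argument: the paper also establishes well-definedness and the ``$\ge$'' direction of \eqref{joint measure formulation of total transport cost} by a measurable selection of fiberwise optimal plans (invoking Corollary~5.22 of Villani's \emph{Optimal Transport: Old and New}, which is exactly the Kuratowski--Ryll-Nardzewski step you propose to carry out by hand) and then gluing them against $\nu$; the ``$\le$'' direction by disintegration is the same observation in both.

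For Part 2, you take a genuinely different, and in some respects cleaner, route. The paper treats the barycenter $\mu$ as the variable: it first proves an auxiliary Kantorovich-duality lemma --- reformulating the total transport cost as an ordinary optimal transport problem between $\rho$ on $X\times Z$ and $\mu\otimes\nu$ on $Y\times Z$ with the degenerate cost $\tilde c(x,z_1,y,z_2)=c(x,y)+\infty\cdot\delta_{z_1\neq z_2}$ --- and deduces from the dual formula that $\mu\mapsto F(\mu):=\int I_c(\rho(\cdot|z),\mu)\,d\nu(z)$ is weakly lower semicontinuous; it then proves a lower-bound lemma $F(\mu)\ge I_c(\rho_X,\mu)$, and finally runs the direct method on minimizing sequences $\mu^n\in P(Y)$, using coercivity plus Heine--Borel for tightness. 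You instead run the direct method on the couplings $\pi_n\in P(X\times Y\times Z)$ themselves: tightness of $\{\pi_n\}$ comes for free from tightness of the three marginals (the $X$- and $Z$-marginals are fixed, and the $Y$-marginal is controlled by coercivity because the $XY$-marginal of $\pi_n$ is itself an admissible coupling of $\rho_X$ and $\mu_n$, which replaces the paper's separate lower-bound lemma), lower semicontinuity of $\pi\mapsto\int c\,d\pi$ is the textbook fact for continuous bounded-below $c$, and the independence constraint passes to the limit by continuity of products with a fixed factor. This sidesteps the duality/LSC lemma entirely and is a perfectly valid alternative; the paper's detour through duality does however serve double duty, since the same duality formula is reused (via $C_b(Y\times Z)$) in the proof of the Monge-map theorem that follows. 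One small point worth tightening in your write-up: the measurability of $z\mapsto I_c(\rho(\cdot|z),\mu)$ via ``a fixed separable family of dual pairs'' deserves a sentence of justification (or you can simply read it off from the measurable selection plus monotone convergence with $c_n=\min(c,n)$, which is what the paper does).
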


\begin{proof}
See Appendix \ref{appendix: proof of wellposedness and existence}. Note that $\pi_{YZ} = \pi_Y \otimes \pi_Z$ implies that the barycenter is independent of the latent variable.
\end{proof}

\begin{remark}
There are pathological examples where the barycenter does not exist: if $X=Y=\R^d$ and
$c(x,y) = \exp[-\|x-y\|^2]$, then any barycenter will tend to be pushed arbitrarily far away. Assumption \ref{assumption: coercive and Heine Borel} is modeled after the squared distance cost $c=\|x-y\|^2$ and prevents such degeneracy.
\end{remark}

\subsection{Conditional transport maps}
\label{sec: conditional transport map}
Having shown that the barycenter $\mu$ exists, the next step is to find the transport maps $T_z$ and inverse transport maps $S_z$ between each conditional distribution $\rho(x|z)$ and $\mu(y)$.

From Theorem \ref{thm: well-posedness and existence}, the barycenter problem admits a Kantorovich solution $\pi \in P(X\times Y\times Z)$. If $\pi$ should also be a Monge solution, that is, $\pi$ were concentrated on the graph of some transport map $T: X\times Z \to Y$, it would follow that for $v$-almost all $z$, 
$$T(\cdot,z)\#\rho(x|z)=\mu(y)$$
or equivalently,
\begin{equation}
\label{def: monge transport map}
\tilde{T}\#\rho(x,z) = \mu(y)\otimes \nu(z), \text{ where } \tilde{T}(x,z):=(T(x,z),z)
\end{equation}
We show that this holds in general:

\begin{theorem}
\label{thm: barycenter transport map}
Given Polish spaces $X,Y,Z$, probability $\rho(x,z) \in P(X\times Z)$ and cost $c \in C(X \times Y)$ that is bounded below ($\inf c > -\infty$), under Assumptions \ref{assumption: coercive and Heine Borel} and \ref{Monge assumption} from Appendix \ref{appendix: assumptions}, the barycenter problem has a Monge solution: the minimum total transport cost (\ref{joint measure formulation of barycenter problem}) becomes
\begin{align}
\label{transport map objective}
\min_{\substack{\text{Borel measurable}\\T:X\times Z \to Y}}
\sup_{\substack{\psi_Y(y) \in C_b(Y)\\
\psi_Z(z) \in C_b(Z)\\
\int \psi_Z(z) d\nu(z) = 0}}
\int \big[c(x,T(x,z)) - \psi_Y(T(x,z))\psi_Z(z) \big] d\rho(x,z)
\end{align}
and every minimizer $T$ is a transport map from $\rho(x,z)$ to a barycenter $\mu(y)$.
\end{theorem}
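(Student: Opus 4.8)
The plan is to manufacture a Monge solution out of the Kantorovich solution already guaranteed by Theorem~\ref{thm: well-posedness and existence}, and then to recognize the minimax in (\ref{transport map objective}) as a Lagrangian relaxation of the constraint that $T(\cdot,z)$ push every conditional $\rho(x|z)$ onto one common measure. First I would invoke Theorem~\ref{thm: well-posedness and existence}: under Assumption~\ref{assumption: coercive and Heine Borel} there is a barycenter $\mu\in P(Y)$ and a Kantorovich solution $\pi\in P(X\times Y\times Z)$ with $\pi_{XZ}=\rho$, $\pi_{YZ}=\mu\otimes\nu$, and $\int c\,d\pi$ equal to the minimum total transport cost. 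Disintegrating $\pi=\pi_z\otimes\nu$ over $Z$ gives $\pi_z\in\Pi(\rho(\cdot|z),\mu)$, and since
$$\int_Z\Big(\int c\,d\pi_z - I_c(\rho(\cdot|z),\mu)\Big)\,d\nu(z)=\int c\,d\pi-\int_Z I_c(\rho(\cdot|z),\mu)\,d\nu(z)=0$$
has a nonnegative integrand, each $\pi_z$ is an optimal plan for $I_c(\rho(\cdot|z),\mu)$ for $\nu$-a.e.\ $z$. Assumption~\ref{Monge assumption} then forces $\pi_z$ to be concentrated on the graph of a map $T_z:X\to Y$; a measurable-selection/disintegration argument (e.g.\ recovering $T(x,z)$ from the fact that the $Y$-disintegration of $\pi$ over $X\times Z$ is $\rho$-a.e.\ a Dirac mass and that $y\mapsto\delta_y$ is a Borel isomorphism onto the Diracs) assembles them into a single Borel $T:X\times Z\to Y$ with $\tilde T\#\rho(x,z)=\mu\otimes\nu$ as in (\ref{def: monge transport map}) and $\int c(x,T(x,z))\,d\rho(x,z)$ equal to the minimum total transport cost.

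Next I would evaluate the inner supremum in (\ref{transport map objective}) for an arbitrary fixed Borel $T$. Since $\int c(x,T(x,z))\,d\rho$ does not depend on $\psi_Y,\psi_Z$, maximizing the objective over $(\psi_Y,\psi_Z)$ amounts to minimizing the bilinear penalty $\int\psi_Y(T(x,z))\psi_Z(z)\,d\rho$, which disintegration rewrites as $\int_Z\psi_Z(z)\,\lb\psi_Y,\mu_z^T\rb\,d\nu(z)$ with $\mu_z^T:=T(\cdot,z)\#\rho(\cdot|z)$. For fixed $\psi_Y$, putting $g(z):=\lb\psi_Y,\mu_z^T\rb\in L^\infty(\nu)$ and $\bar g:=\int g\,d\nu$, the infimum over mean-zero $\psi_Z\in C_b(Z)$ of $\int_Z\psi_Z(g-\bar g)\,d\nu$ is $0$ if $g=\bar g$ $\nu$-a.s.\ and $-\infty$ otherwise (a finite signed Borel measure on a Polish space annihilated by every bounded continuous function vanishes, and the functional is positively homogeneous in $\psi_Z$). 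Taking a countable convergence-determining family $\{f_k\}\subset C_b(Y)$ for the weak topology on $P(Y)$, I conclude that the inner supremum equals $\int c(x,T(x,z))\,d\rho(x,z)$ when $\mu_z^T$ equals a fixed $\mu_T\in P(Y)$ for $\nu$-a.e.\ $z$, and equals $+\infty$ otherwise.

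Finally I would combine the two steps. The outer minimization discards every $T$ whose inner supremum is $+\infty$, leaving precisely the $T$ with $T(\cdot,z)\#\rho(\cdot|z)$ $\nu$-a.s.\ equal to a fixed measure $\mu_T$; for any such $T$,
$$\int c(x,T(x,z))\,d\rho=\int_Z\!\Big(\!\int c(x,T(x,z))\,d\rho(\cdot|z)\Big)d\nu\ \ge\ \int_Z I_c(\rho(\cdot|z),\mu_T)\,d\nu\ \ge\ \min_{\mu'\in P(Y)}\int_Z I_c(\rho(\cdot|z),\mu')\,d\nu,$$
which is the value of (\ref{joint measure formulation of barycenter problem}), and Step~1 exhibits a $T$ attaining it. Hence (\ref{transport map objective}) equals the minimum total transport cost, and for every minimizer $T$ the displayed inequalities are equalities, so $\mu_T$ is a barycenter and $\tilde T\#\rho=\mu_T\otimes\nu$. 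I expect Step~1 to be the main obstacle: the supremum computation and the inequalities are essentially soft, whereas passing from the fiberwise optimal plans $\pi_z$ to graphs and then gluing the fiberwise Monge maps into a single jointly Borel $T$ on $X\times Z$ is exactly where Assumption~\ref{Monge assumption} and a careful measurability argument are genuinely needed.
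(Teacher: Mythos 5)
Your proposal is correct and follows essentially the same route as the paper's proof: both (a) take the Kantorovich solution $\pi$ from Theorem~\ref{thm: well-posedness and existence}, invoke Assumption~\ref{Monge assumption} on the fiberwise optimal plans $\pi(\cdot,\cdot|z)$ to get $\rho$-a.e.\ Dirac disintegrations $\pi(y|x,z)=\delta_{T(x,z)}$, and use the Borel measurability of the Dirac-to-point map to glue these into a jointly Borel $T$; and both (b) identify the inner supremum over $(\psi_Y,\psi_Z)$ as the indicator of the constraint ``$T(\cdot,z)\#\rho(\cdot|z)$ is $\nu$-a.e.\ the same measure,'' which is exactly the paper's Corollary~\ref{cor: verify independence} applied to $\tilde T\#\rho$. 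The only real variation is cosmetic: you verify the constraint fiberwise via a countable convergence-determining family in $C_b(Y)$, whereas the paper states the equivalent joint-measure independence criterion (Lemma~\ref{lemma: independence check}) and proves it with Dynkin's $\pi$--$\lambda$ theorem; likewise, the measurable-selection step you flag as the crux is handled in the paper by first showing $\Delta_A\subseteq P(Y)$ is weakly closed (Lemma~\ref{lemma: Dirac weakly closed}) and building from that a measurable $F_\Delta:P(Y)\to Y$ with $F_\Delta(\delta_y)=y$ (Corollary~\ref{cor: Dirac identification}). Your instinct that this measurability/gluing step is where the real work lies is borne out by the paper's treatment.
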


\begin{proof}
See Appendix \ref{appendix: barycenter transport map}.
\end{proof}

\begin{remark}
The test function $\psi_Y(y)\psi_Z(z)$ in (\ref{transport map objective}) serves as the ``discriminator" that checks that all the conditional distributions $\rho(x|z)$ have been pushed-forward to the same barycenter $\mu$. The technique of discriminator has appeared in \cite{goodfellow2014generative,arjovsky2017wasserstein} to train the generative adversarial networks, and it has been applied to the barycenter problem by \cite{tabak2018conditionaldensity}, which derived test functions of the form
$$\psi(y,z) \text{ such that} \int \psi(y,z) d\nu(z) \equiv 0$$
Theorem \ref{thm: barycenter transport map} improves this technique, because $\psi_Y(y)\psi_Z(z)$ has much less complexity than $\psi(y, z)$. From a data-based perspective, with the distributions given through sample points $\{x_i,y_i,z_i\}_{i=1}^N$, the test function $\psi(y,z)$ becomes a full $N\times N$ matrix, whereas $\psi_Y(y), \psi_Z(z)$ are two $1\times N$ vectors, which can be seen as providing a rank-one factorization of $\psi(y, z)$. Later sections show that all computations are thereby reduced from quadratic to linear time $O(N)$.

An explanation for this improvement is that the barycenter problem has more freedom than the ordinary optimal transport problem. Optimal transport would require the pushforward $\tilde{T}\#\rho(x,z)$ to match a fixed target distribution, so that the dual problem needs to mobilize the entire $C_b(Y\times Z)$ to pin it down. For the barycenter problem, however, Theorem \ref{thm: well-posedness and existence} shows that $\pi_{YZ}=\tilde{T}\#\rho(x,z)$ only needs to satisfy the independence condition
$$\pi_{YZ} = \pi_Y \otimes \pi_Z.$$
Correspondingly, the dual problem only requires a small subspace of $C_b(Y\times Z)$.
\end{remark}

\medskip
Regarding the inverse transport maps $S_z$, one approach is to set $S_z = T_z^{-1}$. This is viable in many scenarios: for instance, by Brennier's Theorem \cite{villani2003topics}, the inverse function $T^{-1}_z$ exists almost surely,
$$T^{-1}_z \circ T_z(x) = x \text{ for $\rho(x|z)$-almost all $x$},$$
and it is the optimal transport map for the inverse transport,
$$T^{-1}_z \# \mu = \rho(x|z).$$
Then, computing $S_z$ becomes a simple regression problem. Given the labeled data $\{x_i,z_i\}_{i=1}^N$, we first compute the barycenter $\{y_i = T(x_i,z_i)\}$ and then find a map $S:Y\times Z\to X$ that approximates $\{x_i\}$ by $\{y_i,z_i\}$. This is the approach used by our algorithms.

It might be helpful to note that there is a more general approach, which directly solves the optimal transport from $\mu(y)\otimes \nu(z)$ back to $\rho(x,z)$. Assertion 1 of Theorem \ref{thm: well-posedness and existence} shows that there is always a Kantorovich solution, while the arguments of Theorem \ref{thm: barycenter transport map} can be applied to show that the inverse transport map $S(y,z)$ can be solved from
$$\min_{\substack{\text{Borel measurable}\\S:Y\times Z \to X}}
\sup_{\psi(x,z) \in C_b(X\times Z)}
\int \big[c(S(y,z),y) - \psi(S(y,z),z)\big] d\mu(y)d\nu(z) + \int \psi(x,z) d\rho(x,z).$$

\begin{remark}
As discussed in the introduction, given a labeled sample $\{x_i,z_i\}_{i=1}^N$, we can estimate each conditional distribution $\rho(x|z)$ by the computed sample $\{S_z \circ T_{z_i}(x_i)\}_{i=1}^N$. Yet, when $X=Y$ is Euclidean and $T_z(x)$ is differentiable (e.g. when modeled by a neural net), we can also derive the density function of $\rho(x|z)$: First, estimate the barycenter's density $\mu(y)$ from the computed sample $\{y_i\}_{i=1}^N$ (e.g. by kernel smoothing). Then, estimate the density through
$$\rho(x|z) = |J(T_z(x))| ~ \mu(T_z(x)),$$
where $|J|$ is the Jacobian determinant. Then, we can estimate the density function $\rho(x,z)$ through $\rho(x|z) \nu(z)$, where $\nu(z)$ is estimated from the $\{z_i\}$.
\end{remark}

\subsection{Latent factor discovery}
\label{sec: latent factor discovery}

Finally, we justify the definition (\ref{variability definition}) of variability, from which it follows that the minimization of the barycenter's variability, which is the objective of factor discovery, is equivalent to the maximization of total transport cost.
We illustrate this intuition in the case where $X=Y=\R^d$ and $c(x,y)=\|x-y\|^2$. Then, the optimal transport cost $I_c$ becomes $W_2^2$, where $W_2$ is the $2$-Wasserstein distance. (See \cite{villani2003topics,villani2008optimal} for a reference of Wasserstein distance, and \cite{agueh2011barycenters} for Wasserstein barycenters.)

\begin{theorem}
\label{thm: variance decomposition}
Given any measurable space $Z$ and probability measure $\rho(x,z) = \rho(x|z)\nu(z) \in P(\R^d\times Z)$, there exists a Wasserstein barycenter $\mu \in P(\R^d)$ that satisfies
\begin{equation}
\label{variance decomposition continuous}
Var(\rho(x)) - Var(\mu) =  \int_Z W_2^2(\rho(x|z),\mu) d\nu(z).
\end{equation}
\end{theorem}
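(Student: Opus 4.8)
The plan is to reduce the statement to a pointwise identity about second moments, exploiting the special structure of the quadratic cost. The key classical fact is that for $c(x,y)=\|x-y\|^2$, the squared Wasserstein distance $W_2^2(\rho,\mu)$ and the variance interact linearly: expanding $\|x-y\|^2 = \|x-m_\rho\|^2 + \|y-m_\mu\|^2 + \|m_\rho - m_\mu\|^2 + (\text{cross terms})$ under an optimal coupling $\pi$ with marginals $\rho(x|z)$ and $\mu$, one finds
\begin{equation}
\label{eq:W2decomp}
W_2^2(\rho(x|z),\mu) = \operatorname{Var}(\rho(x|z)) + \operatorname{Var}(\mu) + \|m(z) - m_\mu\|^2 - 2\,\E_\pi\lb x - m(z), y - m_\mu\rb,
\end{equation}
where $m(z)$ is the mean of $\rho(x|z)$ and $m_\mu$ the mean of $\mu$. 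First I would integrate \eqref{eq:W2decomp} against $d\nu(z)$ and use the law of total variance, $\operatorname{Var}(\rho(x)) = \int_Z \operatorname{Var}(\rho(x|z))\,d\nu(z) + \int_Z \|m(z) - m_\rho\|^2\,d\nu(z)$ with $m_\rho = \int m(z)\,d\nu(z)$. The target identity \eqref{variance decomposition continuous} would then follow provided the barycenter $\mu$ can be chosen so that (i) $m_\mu = m_\rho$ (the barycenter is centered at the global mean) and (ii) the integrated cross term matches up to give a clean cancellation — concretely, I would aim to show the barycenter satisfies an optimality relation forcing $\int_Z \E_{\pi_z}\lb x - m(z), y - m_\mu\rb\,d\nu(z) = \int_Z \operatorname{Var}(\mu)\,d\nu(z)$ via a first-order variation argument.

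The cleaner route, which I would actually pursue, is to invoke Theorem~\ref{thm: well-posedness and existence}: there is a joint Kantorovich solution $\pi \in P(\R^d \times \R^d \times Z)$ with $\pi_{XZ} = \rho$, $\pi_{YZ} = \mu \otimes \nu$, and $\pi$ minimizing $\int \|x-y\|^2\,d\pi$. On this joint plan, write $\int \|x-y\|^2\,d\pi = \int \|(x - m_\rho) - (y - m_\mu)\|^2 \, d\pi + \|m_\rho - m_\mu\|^2$ after re-centering; expanding the square gives $\operatorname{Var}(\rho(x)) + \operatorname{Var}(\mu) - 2\,\E_\pi\lb x - m_\rho, y - m_\mu\rb + \|m_\rho - m_\mu\|^2$. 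Since the left side equals $\int_Z W_2^2(\rho(x|z),\mu)\,d\nu(z)$, the theorem reduces to establishing the single scalar identity
\begin{equation}
\label{eq:crossterm}
2\,\E_\pi\lb x - m_\rho,\, y - m_\mu\rb - \|m_\rho - m_\mu\|^2 = 2\operatorname{Var}(\mu),
\end{equation}
which I expect to derive from the barycenter's variational characterization — perturbing $\mu$ by an infinitesimal translation $\mu \mapsto (\mathrm{id} + \epsilon v)\#\mu$ must leave the total cost stationary, yielding $\E_\pi[x - y] = 0$ hence $m_\mu = m_\rho$, killing the $\|m_\rho - m_\mu\|^2$ term; and perturbing $\mu$ by an infinitesimal linear map $\mu \mapsto (\mathrm{id} + \epsilon A)\#\mu$ (or by a general smooth map and using the Monge structure from Theorem~\ref{thm: barycenter transport map}) yields the matrix relation $\E_\pi[(x-m_\rho)(y-m_\mu)^\top] = \operatorname{Cov}(\mu)$, whose trace is exactly \eqref{eq:crossterm}.

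The main obstacle is rigor in the variational step: justifying that the barycenter $\mu$ exists as a genuine probability measure with finite second moment (so that all the quantities above are finite — this needs $\operatorname{Var}(\rho(x)) < \infty$, which should be noted as an implicit hypothesis or extracted from Assumption~\ref{assumption: coercive and Heine Borel}), and that one may differentiate the total transport cost $\mu \mapsto \int_Z W_2^2(\rho(x|z),\mu)\,d\nu(z)$ along the perturbations $(\mathrm{id}+\epsilon A)\#\mu$ and pass the derivative inside the integral over $Z$. For the latter I would use the envelope/Danskin-type fact that the first variation of $\mu \mapsto W_2^2(\rho(x|z),\mu)$ is computed by freezing the optimal coupling (plus uniform integrability from the quadratic moment bound to interchange $\frac{d}{d\epsilon}$ and $\int_Z$); alternatively, working directly with the joint plan $\pi$ and perturbing its $Y$-marginal sidesteps the need to differentiate $W_2^2$ itself. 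If second moments are not assumed finite, the statement should be read as an identity in $[0,\infty]$, with both sides simultaneously infinite — a case I would dispatch separately at the start. The remaining steps (the expansions, the law of total variance) are routine.
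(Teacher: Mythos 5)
Your proposal is correct and takes a genuinely different route from the paper. The paper first establishes the identity for finitely-supported $\nu$ with absolutely continuous conditionals (its Lemma~\ref{lemma: variance decomposition discrete}), invoking Brenier's theorem and the Agueh--Carlier barycentric identity $\sum_k P_k \nabla\psi_k^* = \mathrm{Id}$ to make the cross term vanish, and then lifts to general $\rho(x,z)$ by approximating $\Omega \in P_2(P_2(\R^d))$ with finitely-supported measures and proving $W_2$-convergence of the barycenters (Lemma~\ref{lemma: barycenter convergence}, which is quite technical, involving a tightness-bootstrap for the second moments of $\mu^n$). You instead work directly with the joint Kantorovich plan $\pi$ from Theorem~\ref{thm: well-posedness and existence} and derive the cross-term identity from first-order optimality. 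Your second route (perturbing the $Y$-marginal of the joint plan rather than differentiating $W_2^2$ itself) is exactly the way to make this rigorous and self-contained: with $\pi_\epsilon := (\mathrm{Id}, \mathrm{Id}+\epsilon A, \mathrm{Id})\#\pi$ feasible for the perturbed constraint $\pi_{YZ} = \mu_\epsilon \otimes \nu$, the function $G(\epsilon) := \int \|x-y-\epsilon Ay\|^2 \, d\pi$ is a quadratic polynomial (finite coefficients once the finite-second-moment case is isolated, as you correctly propose), with $F(\mu) = G(0) \le F(\mu_\epsilon) \le G(\epsilon)$, so $G'(0)=0$ and $\E_\pi[(x-y)y^\top]=0$ for free — no Danskin envelope lemma or interchange of derivative and $\int_Z$ needed. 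The translation perturbation gives $m_\mu = m_\rho$ the same way. Your approach buys considerable economy: no absolute-continuity hypothesis on the conditionals, no Brenier or Agueh--Carlier machinery, no two-level approximation in $P_2(P_2(\R^d))$. What the paper's route buys in exchange is the structural identity $\sum_k P_k \nabla\psi_k^* = \mathrm{Id}$ (the ``barycenter as convex combination of conditionals'' picture, reused later for Theorem~\ref{thm: Gaussian covariance}) and explicit uniqueness of the barycenter in the absolutely continuous case, neither of which your argument delivers, but neither of which the statement of Theorem~\ref{thm: variance decomposition} requires. One small imprecision worth fixing in your write-up: in your first displayed expansion \eqref{eq:W2decomp} the term should read $\|m_\rho - m_\mu\|^2$ only after integrating in $z$ and applying the law of total variance (at the pointwise-in-$z$ level it is $\|m(z)-m_\mu\|^2$, which is what you actually wrote, but the subsequent bookkeeping then needs $\int \E_{\pi_z}\lb m_\rho - m(z), y - m_\mu\rb\,d\nu = 0$, which holds because $\E_{\pi_z}[y] = m_\mu$ for every $z$); your second route avoids this bookkeeping entirely and is the one to keep.
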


\begin{proof}
See Appendix \ref{appendix: variance decomposition} for the proof. To illustrate the proof's intuition, consider the trivial case with Dirac masses:
$$\rho(x,z) = \sum_{k=1}^K P_k \delta_{x_k} \otimes \delta_{z_k}$$
where $P_k$ are positive weights. Then, $\rho(x)$ becomes $\sum P_k \delta_{x_k}$, and the unique $W_2$-barycenter $\mu$ is the Dirac mass on the mean of $\rho(x)$. Both sides of (\ref{variance decomposition continuous}) reduces to $Var(\rho)$. 
The rest would be an approximation argument that goes from Dirac masses to general probabilities, and the high-level idea is that the geometric properties of $\R^d$ can be lifted to $(P(\R^d),W_2)$.
One result that we apply repeatedly comes from Proposition 3.8 and Remark 3.9 in \cite{agueh2011barycenters}: Under technical conditions, given weights $P_k$ and conditionals $\rho_k$, their $W_2$-barycenter $\mu$, and optimal transport maps $T_k$ from $\mu$ back to $\rho_k$, we have the identity
\begin{equation*}
\sum_{k=1}^K P_k T_k = Id, ~\mu-a.e.
\end{equation*}
This identity indicates that the $W_2$-barycenter is exactly the convex sum of the conditionals, just as in the simple setting with Dirac masses illustrated above.
It is worth mentioning that this identity can be generalized to compact Riemannian manifolds \cite[Theorem 4.4]{kim2017wasserstein}.
\end{proof}

\begin{corollary}
\label{cor: variability is variance}
Let $V:P(\R^d)\to \R\cup\{\infty\}$ be any function such that $V(\delta_x) = 0$ for any Dirac mass $\delta_x$. Then, $V$ is the variance $Var$ if and only if for any measurable space $Z$ and any $\rho(x,z) \in P(\R^d\times Z)$, there exists a barycenter $\mu$ that satisfies
\begin{equation}
\label{variance decomposition V}
V(\rho(x)) - V(\mu) =  \int_Z W_2^2(\rho(x|z),\mu) d\nu(z)
\end{equation}
\end{corollary}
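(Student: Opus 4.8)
The plan is to prove the two directions of the equivalence separately. The ``only if'' direction is immediate: if $V = Var$, then (\ref{variance decomposition V}) is precisely the statement of Theorem \ref{thm: variance decomposition}, and $Var(\delta_x) = 0$ trivially. So the content is entirely in the ``if'' direction: assuming $V$ satisfies $V(\delta_x) = 0$ and the decomposition identity (\ref{variance decomposition V}) for every $Z$ and every $\rho(x,z)$, I must show $V(\mu_0) = Var(\mu_0)$ for an arbitrary $\mu_0 \in P(\R^d)$.

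\medskip

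The key idea is to feed the hypothesis a cleverly chosen labeled distribution so that (\ref{variance decomposition V}) pins down $V(\mu_0)$ in terms of quantities we already control. First I would handle the case where $\mu_0$ has finite variance (if $Var(\mu_0) = \infty$, a separate truncation argument is needed — see below). Fix $\mu_0$ and let $m = \E_{\mu_0}[x]$ be its mean. The cleanest construction: take $Z = \R^d$ with $\nu = \mu_0$, and set $\rho(x|z) = \delta_z$, so that $\rho(x,z) = \int \delta_z \otimes \delta_z\, d\mu_0(z)$ is supported on the diagonal. Then $\rho(x) = \mu_0$, each conditional $\rho(x|z) = \delta_z$ is a Dirac mass, and the $W_2$-barycenter of a family of Dirac masses $\delta_z$ weighted by $\mu_0$ is the Dirac mass $\delta_m$ at the mean (this is the ``trivial case'' already noted in the proof sketch of Theorem \ref{thm: variance decomposition}, and uses the convex-combination identity $\sum P_k T_k = Id$). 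Plugging into (\ref{variance decomposition V}): the left side is $V(\mu_0) - V(\delta_m) = V(\mu_0) - 0$, and the right side is $\int W_2^2(\delta_z, \delta_m)\, d\mu_0(z) = \int \|z - m\|^2 \, d\mu_0(z) = Var(\mu_0)$. Hence $V(\mu_0) = Var(\mu_0)$, as desired.

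\medskip

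There is a subtlety: the hypothesis only guarantees existence of \emph{a} barycenter $\mu$ satisfying the identity, and I have chosen a $\rho(x,z)$ for which the barycenter is unique and explicitly $\delta_m$ — so any barycenter furnished by the hypothesis must equal $\delta_m$, and the identity reads as above. I should verify this uniqueness: for squared cost, the $W_2$-barycenter of Dirac masses is the Dirac at the (weighted) mean, which follows from the fact that $\mu \mapsto \int W_2^2(\delta_z, \mu)\, d\mu_0(z) = \int \E_\mu[\|z - y\|^2]\, d\mu_0(z)$ is minimized uniquely (among all of $P(\R^d)$, not just Diracs) at $\mu = \delta_m$ by the standard bias–variance split $\E_\mu\|z-y\|^2 = \|z - \E_\mu[y]\|^2 + Var(\mu) \ge \|z-m\|^2$ after integrating in $z$ — equality forcing $Var(\mu) = 0$ and $\E_\mu[y] = m$.

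\medskip

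The main obstacle is the case $Var(\mu_0) = +\infty$, where I must show $V(\mu_0) = +\infty$ as well. Here the diagonal construction still works formally but I should argue it carefully: with $\rho(x,z)$ as above, $V(\mu_0) - V(\delta_m) = Var(\mu_0) = +\infty$, and since $V(\delta_m) = 0$ is finite, this forces $V(\mu_0) = +\infty$ (using that $V$ takes values in $\R \cup \{\infty\}$, so the subtraction $V(\mu_0) - 0$ makes sense and equals $+\infty$ only if $V(\mu_0) = +\infty$). One should double-check that Theorem \ref{thm: variance decomposition}, and hence the hypothesis (\ref{variance decomposition V}), is genuinely asserted for all $\rho(x,z) \in P(\R^d \times Z)$ including infinite-variance ones — the statement of Theorem \ref{thm: variance decomposition} as written has no moment hypothesis, so the diagonal $\rho$ is admissible and a barycenter (necessarily $\delta_m$) exists. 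If one wanted to avoid relying on the infinite-variance case of Theorem \ref{thm: variance decomposition}, an alternative is to truncate $\mu_0$ to a ball $B_R$, apply the finite-variance result to each truncation, and let $R \to \infty$, using lower semicontinuity or monotonicity of $V$ along the truncations — but this requires extra regularity of $V$ not assumed, so the direct argument above is preferable.
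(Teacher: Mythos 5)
Your proof is correct and follows the same construction as the paper: the ``only if'' direction invokes Theorem~\ref{thm: variance decomposition}, and the ``if'' direction feeds in the diagonal coupling $\rho(x,z)=\delta_z(x)\,\mu_0(z)$ with $Z=\R^d$, whose barycenter is the Dirac at the mean, so that~(\ref{variance decomposition V}) collapses to $V(\mu_0)=Var(\mu_0)$. You go further than the paper's very terse proof by explicitly verifying that the barycenter of this diagonal family is \emph{unique} (so the hypothesis's unspecified barycenter is pinned down to $\delta_m$) and by flagging the infinite-variance edge case; these are genuine loose ends the paper silently passes over, and your treatment of them is a useful addition rather than a detour.

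One small caveat on your infinite-variance discussion: if $Var(\mu_0)=\infty$, the uniqueness argument for the barycenter breaks down --- the total transport cost $\int W_2^2(\delta_z,\mu)\,d\mu_0(z)$ is $+\infty$ for \emph{every} $\mu$, so every $\mu$ trivially attains the infimum and the hypothesis no longer forces the barycenter to be $\delta_m$. Your conclusion $V(\mu_0)=\infty$ then requires knowing the barycenter $\mu$ furnished by the hypothesis has $V(\mu)<\infty$, which is not guaranteed. The paper has the same gap (its proof simply asserts uniqueness). The cleanest fix is to note that, among the multiple barycenters in this degenerate case, $\delta_m$ is one of them, and to strengthen the hypothesis's reading to ``for every barycenter $\mu$'' rather than ``there exists a barycenter $\mu$''; alternatively, accept the convention that~(\ref{variance decomposition V}) with both sides infinite is satisfied vacuously, which is evidently what the paper intends (cf.\ the opening of the proof of Theorem~\ref{thm: variance decomposition} in Appendix~\ref{appendix: variance decomposition}).
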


\begin{proof}
The ``only if" part follows from Theorem \ref{thm: variance decomposition}. For the ``if" part, set $Z=\R^d$. Given any $\rho(x) \in P(\R^d)$, set $\rho(x,z) = \delta_z(x) \rho(z)$. Then, the $X$-marginal of $\rho(x,z)$ is $\rho(x)$, and the unique barycenter $\mu$ is the Dirac measure at the mean of $\rho(x)$. Then, (\ref{variance decomposition V}) reduces to $V(\rho(x)) = Var(\rho(x))$.
\end{proof}

Since a Dirac measure $\delta_{x}$ represents a deterministic event without any uncertainty, $V(\delta_{x})$ should be zero for any reasonable variability function $V$. Then, it follows from Corollary \ref{cor: variability is variance} that the variability defined by (\ref{variability definition}) is exactly the variance $Var$, when the cost is the Euclidean squared distance.

\begin{remark}
As a further justification, notice that if the cost $c(x,y) = \|x-y\|^2$ is generalized to $(x-y)^T Q (x-y)$ for some positive-definite symmetric matrix $Q$, then the corresponding variability becomes a ``weighted" variance, with a different scaling factor in each eigenspace of $Q$:
$$V(\rho) = \int (x-\overline{x})^T Q (x-\overline{x}) d\rho(x) = Var(\sqrt{Q}\#\rho)$$
where $\overline{x}$ is the mean and $\sqrt{Q}\#\rho$ is the pushforward by the linear map $\sqrt{Q}$.
\end{remark}
\begin{proof}
Given any $\rho(x,z)$, formula (\ref{joint measure formulation of barycenter problem}) becomes
\begin{align*}
\min_{\substack{\pi\in P(\R^d\times \R^d\times Z)\\ \pi_{XZ}=\rho(x,z)\\ \pi_{YZ} = \pi_Y\otimes\pi_Z}} \int (x-y)^T Q (x-y) d\pi(x,y,z) &= 
\min_{\substack{\pi\in P(\R^d\times \R^d\times Z)\\ \pi_{XZ}=\rho(x,z)\\ \pi_{YZ} = \pi_Y\otimes\pi_Z}} \int \|x-y\|^2 d(\sqrt{Q},\sqrt{Q},Id)\#\pi(x,y,z)\\
&= Var(\sqrt{Q}\#\rho(x)) - Var(\sqrt{Q}\#\mu),
\end{align*}
where $\sqrt{Q}\#\mu$ is a barycenter of $(\sqrt{Q},Id)\#\rho(x,z)$ (under cost $\|x-y\|^2$) that satisfies (\ref{variance decomposition continuous}). Then, $\mu$ is a barycenter of $\rho(x,z)$ (under cost $(x-y)^TQ(x-y)$) and satisfies (\ref{variance decomposition V}) with $V(\rho)=Var(\sqrt{Q}\#\rho)$.
\end{proof}

\medskip
It follows from definition (\ref{variability definition}) that the variability of the barycenter is complementary to the total transport cost (\ref{transport map objective}) to the barycenter. As argued in the introduction,  given any unlabeled data $\rho(x)$, latent factor discovery looks for a labeling $\rho(x,z)$ that minimizes the variability of its barycenter. Then, factor discovery has the following equivalent formulation based on (\ref{transport map objective}):
\begin{equation}
\label{factor discovery objective}
\sup_{\substack{\rho(x,z)\\ \rho_X = \rho(x)}}
\min_{\substack{\text{measurable}\\T:X\times Z \to Y}}
\sup_{\substack{\psi_Y(y) \in C_b(Y)\\
\psi_Z(z) \in C_b(Z)\\
\int \psi_Z(z) d\nu(z) = 0}}
\int \big[c(x,T(x,z)) - \psi_Y(T(x,z))\psi_Z(z) \big] d\rho(x,z).
\end{equation}

To solve (\ref{factor discovery objective}), factor $\rho(x,z)$ into $p(z|x)\rho(x)$, where $p(z|x)$ is the conditional label distribution for the sample point $x$. In particular, when $Z$ is finite, $p(z|x)$ can be seen as a classifier on $X$. Thus, an effective way to optimize $\rho(x,z)$ and regularize the solution is to parameterize $p(z|x)$: e.g. we can set
\begin{equation}
\label{neural net assignment distribution}
p_{\theta}(z |x) = G_{\theta}(x, \cdot)\# \mathcal{N}(0,I)
\end{equation}
where $G_{\theta}$ is a neural net with two inputs and $\mathcal{N}(0,I)$ is a unit normal distribution.


Something to be aware of is the $\sup_{\rho}$ in problem (\ref{factor discovery objective}), which could lead to degenerate solutions if there is no bound on the ``expressivity" of $p(z|x)$. Its behavior is well-controlled when $Z$ is finite and $\rho(x,z)$ is simply a clustering plan of $\rho(x)$. However, theoretical issues could arise when $Z$ is a larger space such as $[0,1]$: we can always find a measurable map $f:[0,1]\to X$ that transports the uniform distribution $U[0,1]$ onto $\rho(x)$ (e.g. via Theorem 1.1 of \cite{kallenberg2017random}), and then we can set
\begin{equation}
\label{singular decomposition}
v(z):= U[0,1], ~\rho(x,z) := \delta_{f(z)}(x) ~v(z)
\end{equation}
This $\rho(x,z)$ has the right marginal $\rho_X=\rho(x)$, but since every conditional $\rho(x|z)$ is a Dirac mass, the barycenter is also a Dirac mass. So problem (\ref{factor discovery objective}) leads to a trivial solution compressing all data to a single point.

This situation is analogous to ``overfitting" in regression problems, when one intends to learn a rough sketch of the data but the algorithm learns all the fine details instead. Fortunately, such an issue can be avoided in practice by controlling how much the algorithm learns during training. Suppose we adopt an implementation similar to (\ref{neural net assignment distribution}). A desirable solution $p(z|x)$ should capture the overall shape of the data $\rho(x)$ but should not become as singular as the solution $p(z|x) = \delta_{f^{-1}(x)}(z)$ from (\ref{singular decomposition}). The key is to control the complexity of the function $G_{\theta}$, so that $G_{\theta}$ does not become as complex as, for instance, the $f(z)$ in (\ref{singular decomposition}). This can be achieved by either explicit or implicit regularizations. Specifically, the complexity of $G_{\theta}$ can be characterized by appropriate functional norms such as the Barron norm \cite{e2019barron}, the number of parameters, or the Fourier spectrum \cite{xu2019frequency}. Explicit regularizations include penalizing the norm \cite{e2018priori}, bounding the number of parameters, and early stopping \cite{li2019stopping}. Implicit regularizations include training dynamics that protect against overfitting \cite{advani2017high} and the frequency principle \cite{rahaman2018spectral} that prioritizes learning low frequencies. Even though these regularity results were obtained in settings different from problem (\ref{factor discovery objective}), they are in a sense universal in neural network training, and hence applicable to our setting. Indeed, none of our experimental results in Section \ref{sec: test results} exhibit degenerate solutions.

Besides the unlimited complexity of $p(z|x)$, there is another, rather trivial way for degenerate solutions to arise in problem (\ref{factor discovery objective}). If $X$ and $Z$ are Euclidean and $Z$ has higher dimension than $X$ (or more generally, $X \subseteq Z$), then we can set
\begin{equation*}
p(z|x) = \delta_z(x), ~\rho(x,z) = \delta_x(z) \rho(x)
\end{equation*}
This kind of overfitting is analogous to an autoencoder whose hidden layers have the same size as the input/output layers, so that the network can become the identity function and learn the trivial latent variable $z = x$. We will discuss more about the connection between problem (\ref{factor discovery objective}) and autoencoders in Sections \ref{sec: label net} and \ref{sec: autoencoder}.

\section{Algorithmic design}
\label{sec: algorithmic design}
In the previous section, we converted conditional density estimation, a problem involving probability distributions, to the dual of the barycenter problem (\ref{transport map objective}), which involves only functions. Then, latent factor discovery becomes (\ref{factor discovery objective}) with an additional maximization over all labelings $\rho(x, z)$ whose marginal $\rho_X$ is the given unlabeled distribution $\rho(x)$.

In practice, we are given a labeled finite sample set $\{x_i,z_i\}_{i=1}^N$ for conditional density estimation, and the objective (\ref{transport map objective}) becomes
\begin{equation}
\label{data-based barycenter objective}
\inf_{\tau} \sup_{\xi} L(\tau,\xi)
= \frac{1}{N} \sum_{i=1}^N \big[ c\big(x_i,T_{\tau}(x_i,z_i)\big) - \psi^Y_{\xi} \big(T_{\tau}(x_i,z_i)\big) \tilde{\psi}^Z_{\xi} (z_i) \big],
\end{equation}
where $T_{\tau}, \psi^Y_{\xi}, \psi^Z_{\xi}$ are maps parameterized by $\tau,\xi$ and
\begin{equation}
\label{integral constraint for psi Z}
\tilde{\psi}^Z_{\xi}(z) := \psi^Z_{\xi}(z) - \frac{1}{N}\sum_{i=1}^N \psi^Z_{\xi}(z_i),
\end{equation}
which is a sample-based version of the constraint $\int \psi^Z dv = 0$ from (\ref{transport map objective}).

For factor discovery, we follow the analysis in Section \ref{sec: latent factor discovery} to model the labelings $\rho(x,z)$ via $p_{\theta}(z|x)\rho(x)$, where $p_{\theta}(z|x)$ is a parameterized conditional label distribution. Then the objective (\ref{factor discovery objective}) becomes
\begin{align}
\label{data-based factor discovery objective}
\begin{split}
\sup_{\theta} \inf_{\tau} \sup_{\xi} L(\theta,\tau,\xi)
&= \frac{1}{N} \sum_{i=1}^N \E_{p_{\theta}(z|x_i)} \big[ c\big(x_i,T_{\tau}(x_i,z)\big) - \psi^Y_{\xi} \big(T_{\tau}(x_i,z)\big) \tilde{\psi}^Z_{\xi} \big(z\big) \big]\\
\tilde{\psi}^Z(z) &:= \psi^Z(z) - \frac{1}{N} \sum_{i=1}^N \E_{p_{\theta}(\tilde{z}|x_i)} \psi^Z(\tilde{z}).
\end{split}
\end{align}
It could be difficult to compute the expectation $\E_{p_{\theta}(z|x_i)}$ directly, unless it has an analytical solution or $Z$ is finite. For simplicity, we often restrict to the case $p_{\theta}(z|x) = \delta_{z_{\theta}(x)}$, that is, the labeling is given by a deterministic map, $z_i = z_{\theta}(x_i)$.

In the following sections, we demonstrate the efficacy of (\ref{data-based barycenter objective}) and (\ref{data-based factor discovery objective}) by implementing them through neural networks. We focus on the special case when $X,Y$ are Euclidean spaces, $Z$ is either Euclidean or finite, and the cost $c$ is differentiable.

\subsection{BaryNet}

Since (\ref{data-based barycenter objective}) and the deterministic version of (\ref{data-based factor discovery objective}) are optimization problems that involve only functions, it is natural to solve them by neural networks. By Theorems 1 and 2 of \cite{hornik1991approximation}, feedforward neural nets are universal approximators for continuous functions $C(\R^d)$ and measurable functions $L^1(d\rho)$, so they can model the continuous test function $\psi^Y(y)$ (and $\psi^Z(z)$ when $Z$ is Euclidean) and the measurable transport map $T(x,z)$. We can also model the conditional latent distribution $p_{\theta}(z|x)$ or the deterministic $z_{\theta}(x)$ by neural nets, if we require that they depend continuously on their parameters. Hence, (\ref{data-based barycenter objective}) and (\ref{data-based factor discovery objective}) become a collection of interacting networks, an architecture that we refer to as ``BaryNet", for barycenter network. As (\ref{data-based barycenter objective}) and (\ref{data-based factor discovery objective}) can be seen as a supervised/unsupervised pair, we call the corresponding networks the supervised/unsupervised BaryNet.

One advantage of neural nets is the ease to control their expressivity. A neural net can approximate any continuous function if either its width \cite{hornik1991approximation} or depth \cite{lu2017expressive} goes to infinity, so we can adjust the network's size, or more generally its functional norm \cite{e2019barron}, to solve problems with varying complexity. In factor discovery, for instance, if we know a priori that the ideal labeling $z_{\theta}$ should approximate the data's principal components, or if we desire simple labelings that are more interpretable, then we can reduce the size of $z_{\theta}$ or penalize its norm.

Another advantage is that the structure of the solution can be easily encoded in the network architecture. For instance, if the ideal solution should be a perturbation to the identity: $f(x) = x+o(|x|)$, then we can model only the perturbation part: $f_{\theta}(x) = x+ g_{\theta}(x)$. This approach, known as ``residual network" \cite{he2016deep}, makes the network easier to optimize and increases the likelihood to reach optimal solutions. It turns out that this residual design resembles the structure of the transport map $T(x,z)$.

\subsubsection{Transport and inverse transport nets}
\label{sec: transport net}
As in residual networks, our transport map can be modeled as
\begin{equation}
\label{residual transport map}
T_{\tau}(x,z) = x + R_{\tau}(x,z),
\end{equation}
if the transportation takes place within a single space, $X=Y=\R^d$. A motivation is that solutions to the barycenter problem (\ref{transport map objective}) generally have the following properties:

\begin{enumerate}
\item Each transport map $T_{z}\#\rho(x|z) = \mu$ starts from an identity component $x$. This holds in general for optimal transport maps in Euclidean spaces, as these are special cases of the transport maps on complete Riemannian manifolds:
\begin{equation*}
T(x) = \exp_x(R(x))
\end{equation*}
where $R(x)$ is a tangent vector that ``points" to the transportation's destination (e.g. see McCann's theorem, Theorem 2.47 of \cite{villani2003topics}), which in the Euclidean setting reduces to $T(x) = x+R(x)$.

As a concrete example, Theorem 2.44 of \cite{villani2003topics} shows that if the cost $c=c(x-y)$ is strictly convex and superlinear, and if the source and target measures are absolutely continuous, then the optimal transport map has the residual form
\begin{equation*}
T(x) = x - \nabla c^* (\nabla \phi(x)),
\end{equation*}
where $c^*$ is the Legendre transform of $c$ and $\phi$ is $c$-concave.
Another example is provided in Section 3.3 of \cite{tabak2018explanation}: if $\rho(x|z)$ and $\mu$ have similar shapes, then the transport map have the form
\begin{equation}
\label{first-moment approximation transport}
T(x) \approx x + \beta(z), ~\beta(z) = \overline{y} - \overline{x}(z),
\end{equation}
where $\overline{x}(z),\overline{y}$ are the means of $\rho(x|z)$ and $\mu$. This $T(x)$ approximates the optimal transport map up to the first moment.

\item Each $T_z$ is invertible: it was argued in Section \ref{sec: conditional transport map} that under general conditions, such as under the hypothesis of Brennier's theorem \cite{villani2003topics}, the transport map $T_z$ is invertible $\rho(x|z)$-almost surely and its inverse $T_z^{-1}$ transports $\mu$ back to $\rho(x|z)$. The residual design (\ref{residual transport map}) is an effective way to ensure that $T_z$ is invertible: if the residual term is small, in the sense that $\nabla_x R \approx O$, then the inverse exists locally by the inverse function theorem, and it has the form
\begin{equation}
\label{asymptotic inverse}
S_z(x) = x - R(x,z) + O(\|\nabla_x R(x,z)\| \cdot \|R(x,z)\|).
\end{equation}
\end{enumerate}

\noindent
An additional benefit of the residual design is that the Jacobian matrix $\nabla_x T_{\tau}$ is close to the identity when the residual term is small, thus alleviating the exploding and vanishing gradient problem during training \cite{he2016deep}.\\


Regarding the inverse transport map $S(y,z)$, formula (\ref{asymptotic inverse}) suggests that we should also model $S(y,z)$ as a residual network with the same architecture as $T_{\tau}$. As argued in Section \ref{sec: conditional transport map}, after the transport map $T_{\tau}$ is obtained from the barycenter problem (\ref{data-based barycenter objective}) or (\ref{data-based factor discovery objective}), the inverse $S(y,z)$ can be found through a regression problem:
\begin{equation}
\label{inverse transport by regression}
\inf_{\theta} \mathbb{E}_{\rho(x,z)}\Big[ c\big(x, S_{\theta}(T_{\tau}(x,z),z)\big) \Big]
\approx \frac{1}{N} \sum_{i=1}^N c\big(x_i, S_{\theta}(y_i,z_i)\big).
\end{equation}

\subsubsection{Label net}
\label{sec: label net}
For the factor discovery problem (\ref{data-based factor discovery objective}), we focus on two cases, when $Z$ is either finite: $Z=\{1,\dots K\}$, or Euclidean: $Z=\R^k$. For the finite case, the conditional label distribution $p(z|x)$ becomes a probability vector, which can be modeled by the SoftMax function
\begin{equation*}
p_{\theta}(z|x) = \text{SoftMax}(p_{\theta}(x)) = \bigg[\frac{e^{p^1_{\theta}(x_i)}}{\sum_{k=1}^K e^{p^k_{\theta}(x_i)}}, ~\dots~ \frac{e^{p^K_{\theta}(x_i)}}{\sum_{k=1}^K e^{p^k_{\theta}(x_i)}} \bigg],
\end{equation*}
where $p_{\theta}(x): \R^d\to\R^K$ is some neural net. The test function $\psi^Z(z)$ reduces to a vector $[q_1,\dots q_K]$, and the transport map $T(x,z)$ splits into $K$ maps $T^k(x)$. The objective (\ref{data-based factor discovery objective}) becomes
\begin{align}
\label{discrete factor discovery objective}
\begin{split}
\sup_{\theta} \inf_{\tau} \sup_{\xi} L(\theta,\tau,\xi)
&= \frac{1}{N} \sum_{i=1}^N \sum_{k=1}^K p_{\theta}(k|x_i) \big[ c\big(x_i,T^k_{\tau}(x_i)\big) - \psi^Y_{\xi} \big(T^k_{\tau}(x_i)\big) \tilde{q}_k \big],\\
\tilde{q}_k &:= q_k - \sum_{h=1}^K  q_h \sum_{i=1}^N \frac{p_{\theta}(h|x_i)}{N}.
\end{split}
\end{align}
If we consider the conditional distributions $\rho_k := \rho(x|k)$ as clusters, then $p(k|x_i)$ is the membership probability that sample $x_i$ belongs to cluster $\rho_k$, and $\rho_1,\dots \rho_K$ become a clustering plan for $\rho(x)$ with weights $\frac{1}{N}\sum_{i=1}^N p(k|x_i)$. Hence, problem (\ref{discrete factor discovery objective}) reduces to clustering (with soft assignments).

\begin{remark}
In prior work, optimal transport barycenter has been applied to the clustering problem in \cite{tabak2018explanation,yang2019clustering}, which study the case with squared Euclidean distance cost and solve directly the primal problem
\begin{equation}
\label{clustering primal}
\min_{p(k|x_i)}Var(\text{barycenter})
\end{equation}
instead of the dual problem (\ref{discrete factor discovery objective}). If we simplify the transport maps $T_k$ by their first-moment approximations (\ref{first-moment approximation transport}), then \cite{tabak2018explanation} shows that (\ref{clustering primal}) produces the $k$-means algorithm. If we approximate $T_k$ so that it aligns the second moments of $\rho(x|z)$ and $\mu$, then \cite{yang2019clustering} shows that (\ref{clustering primal}) leads to more robust algorithms that recognize non-isotropic clusters. While \cite{tabak2018explanation,yang2019clustering} only compute the membership probabilities $p(k|x_i)$, the dual problem (\ref{discrete factor discovery objective}) solves for both $p(k|x_i)$ and $T_k$, without any simplifying assumption on $T_k$.
\end{remark}

For the Euclidean case $Z=\R^k$, we focus on deterministic labelings $p(z|x_i) = \delta_{z_i}$ and model $z_i$ by $z_{\theta}(x_i)$. Then the objective (\ref{data-based factor discovery objective}) simplifies into
\begin{align}
\label{deterministic factor discovery objective}
\begin{split}
\sup_{\theta} \inf_{\tau} \sup_{\xi} L(\theta,\tau,\xi)
&= \frac{1}{N} \sum_{i=1}^N \big[ c\big(x_i,T_{\tau}(x_i,z_{\theta}(x_i))\big) - \psi^Y_{\xi} \big(T_{\tau}(x_i,z_{\theta}(x_i))\big) \tilde{\psi}^Z_{\xi} \big(z_{\theta}(x_i)\big) \big],\\
\tilde{\psi}^Z(z) &:= \psi^Z(z) - \frac{1}{N} \sum_{i=1}^N \psi^Z(z_{\theta}(x_i)).
\end{split}
\end{align}

A useful property of the labeling $\rho(x,z)$ is that it is invariant under bijections of the latent variable space $Z$, because essentially we are only looking for a disintegration $\rho(x|z)$ regardless of the specific $z$. This is trivial for the clustering problem, since any permutation of the labels $Z=\{1,\dots K\}$ produces a different but equivalent labeling. In general, given any $\rho(x,z), T(x,z), \psi^Z(z)$ for the factor discovery problem (\ref{factor discovery objective}) and given any measurable $f: Z\to Z$, the triple
$$\big( (Id,f)\#\rho(x,z), ~T, ~\psi^Z \big)$$
produces the same value as
$$\big( \rho(x,z), ~T\circ (Id,f), ~\psi^Z \circ f \big),$$
so they can be considered equivalent solutions.

This invariance suggests that we can reduce the freedom in the architecture of $z_{\theta}$ without affecting the expressivity of the BaryNet. Let $NN(X,Z)$ denote the set of all neural nets mapping $X$ to $Z$. Originally, $z_{\theta}$ should range in $NN(\R^d,\R^k)$, which is dense in $C(K,\R^k)$ for any compact $K\subseteq\R^d$, but now we can restrict to some smaller family $\mathcal{Z} \subsetneq NN(\R^d,\R^k)$ such that $NN(\R^k,\R^k) \circ \mathcal{Z}$ is dense in $C(K,\R^k)$. For instance, it is straightforward to show that $\mathcal{Z}$ can be the set of bounded Lipschitz neural nets whose last layer is bias-free. Such restriction is helpful for training, as it reduces the size of the search space.

\begin{remark}
The finite case (\ref{discrete factor discovery objective}) and the Euclidean case (\ref{deterministic factor discovery objective}) can be combined into a cluster detection task: first, we solve (\ref{deterministic factor discovery objective}) with $Z=\R^k$ and $k\leq 3$ small, so that $Z$ can be visualized. Then, we inspect the latent variables $\{z_i\}$ to see if there are recognizable clusters, and how many. If so, we perform clustering (\ref{discrete factor discovery objective}) on either the original data $\{x_i\}$ or the processed data $\{z_i\}$.
\end{remark}

As discussed in Section \ref{sec: latent factor discovery}, degenerate solutions $\rho(x,z)$ can arise either because the complexity of $p(z|x)$ goes unbounded or because $\text{dim} Z = k \geq d = \text{dim} X$. For the latter case, we obtain the trivial labeling $z=x$, that is, $\rho(x,z) = \delta_{z}(x)\rho(z)$. One simple regularization is to impose a bottleneck architecture, setting $k<d$. It is analogous to the \textit{undercomplete autoencoder} \cite{goodfellow2016deep}, whose intermediate layers are smaller than the input/output layers so that the autoencoder cannot pass by learning the identity function. 
As it turns out, the connection to autoencoders runs deeper than this.

\subsection{Relation to autoencoders and generative modeling}
\label{sec: autoencoder}
The unsupervised BaryNet (\ref{data-based factor discovery objective}) can be conceptualized in terms of encoders and decoders. The label net $z_{\theta}$ (or more generally $p_{\theta}(z|x)$) encodes each $x_i$ into a latent code $z_i$, and to recover $x_i$, the inverse transport map $S(\cdot,z)$ decodes $z_i$ probabilistically as the conditional distribution $\rho(x|z_i) = S_{z_i}\#\mu$. Then the ``reconstruction loss" of the encoding/decoding process should be proportional to the variability of $\rho(x|z)$,
\begin{equation}
\label{reconstruction loss proportional to conditional variability}
\text{Reconstruction loss} \propto \int V\big(\rho(x|z)\big) dv(z).
\end{equation}
Meanwhile, it is natural to expect that the variability of the barycenter $V(\mu)$ is positively correlated to each $V(\rho(x|z))$, that is, greater variability in the conditional distributions results in greater variability in their representative $\mu$. By combining the two correlations, it appears that $V(\mu)$ behaves like a reconstruction loss, making the factor discovery problem analogous to an autoencoder.

We formalize this intuition in the case when $X=Y=\R^d$ with squared distance cost $c(x,y)=\|x-y\|^2$, and when all conditionals $\rho(x|z)$ are Gaussians. By Corollary \ref{cor: variability is variance}, the variability $V$ becomes the variance $Var$. Denote each $\rho(x|z)$ by $\mathcal{N}(\overline{x}(z),S(z))$, where $\overline{x}$ is the mean and $S$ is the covariance matrix. Denote the principal square root matrix by $\sqrt{S}$.

\begin{theorem}
\label{thm: Gaussian covariance}
Given any measurable space $Z$ and any $\rho(x,z) = \rho(x|z)v(z) \in P(\R^d\times Z)$ such that each $\rho(x|z)$ is a Gaussian distribution $\mathcal{N}(\overline{x}(z),S(z))$, if the marginal $\rho(x)$ has finite second moment: $\E_{\rho(x)}\big[\|x\|^2\big] \leq \infty$, then there exists a barycenter $\mu$, which is a Gaussian $\mathcal{N}(\overline{x},S)$ and satisfies
\begin{align*}
\overline{x} &= \int \overline{x}(z) dv(z) = \E_{\rho(x)}[x]\\
S &= \int \sqrt{\sqrt{S} \cdot S(z) \cdot \sqrt{S}} ~dv(z).
\end{align*}
Furthermore, if the set of $z$ such that $\rho(x|z)$ is non-degenerate (its covariance $S(z)$ is positive-definite) has positive $v$-measure, then this is the unique barycenter.
\end{theorem}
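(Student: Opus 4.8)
The plan is to collapse the (a priori infinite-dimensional) barycenter problem onto a finite-dimensional minimization over a single mean vector and a single covariance matrix, exploiting the closed form of the $2$-Wasserstein distance between Gaussians together with Gelbrich's inequality, and then to read off the two displayed identities from the first-order optimality conditions.

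First I would record the consequences of the moment hypothesis. Disintegrating, $\E_{\rho(x)}[\|x\|^2]=\int_Z\big(\|\overline x(z)\|^2+\operatorname{tr}S(z)\big)\,dv(z)<\infty$, so $C_1:=\int_Z\operatorname{tr}S(z)\,dv(z)<\infty$ and $m^\ast:=\int_Z\overline x(z)\,dv(z)$ converges, with $m^\ast=\E_{\rho(x)}[x]$ by Fubini. I would then invoke two classical facts: for Gaussians, $W_2^2(\mathcal N(m_1,S_1),\mathcal N(m_2,S_2))=\|m_1-m_2\|^2+\mathfrak B^2(S_1,S_2)$ with $\mathfrak B^2(A,B):=\operatorname{tr}A+\operatorname{tr}B-2\operatorname{tr}(A^{1/2}BA^{1/2})^{1/2}$ the squared Bures distance; and Gelbrich's bound $W_2^2(\alpha,\beta)\ge\|m_\alpha-m_\beta\|^2+\mathfrak B^2(S_\alpha,S_\beta)$ for arbitrary $\alpha,\beta$ with the indicated moments, which becomes an equality when both $\alpha,\beta$ are Gaussian.

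Next I would set $F(\Sigma):=\int_Z\mathfrak B^2(S(z),\Sigma)\,dv(z)$ on the positive semidefinite cone. Using $\operatorname{tr}(A^{1/2}BA^{1/2})^{1/2}\le\sqrt{d\operatorname{tr}(AB)}\le\sqrt{d\operatorname{tr}A\operatorname{tr}B}$ and Cauchy--Schwarz one gets $F(\Sigma)\ge C_1+\operatorname{tr}\Sigma-2\sqrt{dC_1}\sqrt{\operatorname{tr}\Sigma}$, so $F$ is coercive; it is also continuous by dominated convergence (since $0\le\mathfrak B^2(S(z),\Sigma)\le\operatorname{tr}S(z)+\operatorname{tr}\Sigma$), hence attains a minimum at some $\Sigma^\ast$. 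Put $G^\ast:=\mathcal N(m^\ast,\Sigma^\ast)$. For an arbitrary competitor $\mu\in P(\R^d)$: if $\mu$ has infinite second moment, then by the $W_2$ triangle inequality and the finiteness of the second moments of a.e.\ $\rho(x|z)$ its total cost is $+\infty$; if $\mu$ has finite moments $m_\mu,S_\mu$, then Gelbrich together with the minimality of $m^\ast$ for $m\mapsto\int_Z\|\overline x(z)-m\|^2dv$ and of $\Sigma^\ast$ for $F$ gives $\int_Z W_2^2(\rho(x|z),\mu)\,dv\ge\int_Z\|\overline x(z)-m^\ast\|^2dv+F(\Sigma^\ast)=\int_Z W_2^2(\rho(x|z),G^\ast)\,dv$, the last step being the Gaussian equality case. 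Hence $G^\ast$ is a barycenter (equivalently, take a barycenter from Theorem \ref{thm: variance decomposition} and Gaussianize it to its moments, which by Gelbrich cannot increase the total cost). The mean equation $\overline x=m^\ast=\int\overline x(z)\,dv=\E_{\rho(x)}[x]$ is now immediate, and the covariance equation is the Euler--Lagrange condition for $\Sigma^\ast$: with the affine optimal maps $T_z(x)=\overline x(z)+(\Sigma^\ast)^{-1/2}\big((\Sigma^\ast)^{1/2}S(z)(\Sigma^\ast)^{1/2}\big)^{1/2}(\Sigma^\ast)^{-1/2}(x-m^\ast)$ from $G^\ast$ to $\rho(x|z)$, the continuum analogue of the barycenter identity $\int_Z T_z\,dv=\mathrm{Id}$ (the identity used for finitely many $\rho_k$ in the proof of Theorem \ref{thm: variance decomposition}), after conjugating by $(\Sigma^\ast)^{1/2}$, reads exactly $\Sigma^\ast=\int_Z\big((\Sigma^\ast)^{1/2}S(z)(\Sigma^\ast)^{1/2}\big)^{1/2}dv(z)$.

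Finally, for uniqueness: when $v(\{z:S(z)\succ0\})>0$ a positive $v$-fraction of the conditionals are absolutely continuous, and the standard uniqueness criterion for Wasserstein barycenters (uniqueness holds as soon as a positive fraction of the averaged measures vanishes on $(d-1)$-rectifiable sets; cf.\ \cite{agueh2011barycenters,kim2017wasserstein}) forces every barycenter to coincide with $G^\ast$. I expect the main obstacle to lie in the covariance step when $\Sigma^\ast$ is degenerate: one must first show $\Sigma^\ast\succ0$ under the non-degeneracy hypothesis (the identity is the trivial $0=0$ when all $S(z)$ vanish, and otherwise one may restrict to the common range of the $S(z)$), and make the matrix differentiation of $\Sigma\mapsto\operatorname{tr}(\Sigma^{1/2}S\Sigma^{1/2})^{1/2}$ rigorous; by contrast, Gelbrich's inequality and the Gaussian $W_2$ formula are classical and can be cited off the shelf.
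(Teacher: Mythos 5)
Your argument is correct in its essentials and takes a genuinely different route from the paper. The paper proves the theorem by recycling the machinery from Theorem~\ref{thm: variance decomposition}: it lifts $\rho(x,z)$ to a measure $\Omega\in P_2(P_2(\R^d))$, constructs finitely supported approximations $\Omega^n=\sum_k P_k^n\delta_{\rho_k^n}$ whose atoms are \emph{non-degenerate} Gaussians, invokes the Agueh--Carlier fixed-point formula (their Theorem~6.1) for the barycenter of finitely many Gaussians, and then passes to the limit using the $W_2$-convergence lemmas (Lemmas~\ref{lemma: x margin convergence} and~\ref{lemma: barycenter convergence}) already established for Theorem~\ref{thm: variance decomposition}; uniqueness is obtained via Lemma~3.2.1 of \cite{pass2013optimal}. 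You instead collapse the problem to a finite-dimensional minimization over $(m,\Sigma)$ directly, using the closed-form Gaussian $W_2$ and Gelbrich's inequality to show that replacing any competitor $\mu$ by the Gaussian with the same first two moments can only decrease the total cost, then prove coercivity and continuity of $F(\Sigma)=\int\mathfrak B^2(S(z),\Sigma)\,dv$ and read the two displayed identities off the first-order conditions. Your route is more self-contained and avoids the $P_2(P_2(\R^d))$ lift entirely, which is a conceptual simplification; what the paper's route buys is that by approximating with non-degenerate finite families and citing the already-proven finite-$K$ identity, it never needs to differentiate the Bures functional or argue about the boundary of the PSD cone --- issues you correctly flag as the delicate points in your approach (the Euler--Lagrange/barycenter-identity step requires $\Sigma^\ast\succ0$, or a restriction to the common range of the $S(z)$, to be airtight, and the continuum version of $\int_Z T_z\,dv=\mathrm{Id}$ is not itself off the shelf). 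Both styles of argument are sound, and your proposal would be complete once the matrix-calculus and degeneracy points you identify are filled in.
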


\begin{proof}
See Appendix \ref{appendix: Gaussian covariance}.
\end{proof}

Theorem \ref{thm: Gaussian covariance} implies that the variability of the barycenter $V(\mu) = Var(\mu) = Tr[S]$ is positively correlated to the variability of the conditional distributions $V(\rho(x|z)) = Tr[S(z)]$. A rigorous argument could apply the implicit function theorem on Banach spaces to show that $S$ depends differentiably on $S(z) \in L^1((\R^k,dv)\to\R^{d\times d})$, and that any perturbation to $S(z)$ that increases its eigenvalues would also increase the eigenvalues of $S$. Nevertheless, the following corollary seems sufficient to justify the positive correlation.

\begin{corollary}
\label{cor: Gaussian std}
If we further assume that each $\rho(x|z)$ is an isotropic Gaussian: $S(z) = std^2(z) \cdot Id$, then the unique barycenter is an isotropic Gaussian with a standard deviation of
\begin{equation*}
std = \int std(z) dv(z).
\end{equation*}
\end{corollary}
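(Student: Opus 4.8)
The plan is to derive Corollary~\ref{cor: Gaussian std} directly from Theorem~\ref{thm: Gaussian covariance} by specializing to isotropic covariances. Under the hypothesis $S(z) = std^2(z)\cdot Id$, I first note that the set of $z$ with $std(z) > 0$ is precisely the set where $\rho(x|z)$ is non-degenerate; if this set had $v$-measure zero then every conditional would be a Dirac mass and the barycenter would be the Dirac mass at $\overline{x}$, which is the isotropic Gaussian with $std = 0 = \int std(z)\,dv(z)$, so the formula holds trivially. Otherwise Theorem~\ref{thm: Gaussian covariance} applies and gives a \emph{unique} barycenter $\mu = \mathcal{N}(\overline{x}, S)$ with $\overline{x} = \int \overline{x}(z)\,dv(z)$ and $S = \int \sqrt{\sqrt{S}\cdot S(z)\cdot\sqrt{S}}\,dv(z)$.

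The key step is to solve the fixed-point equation for $S$ under the isotropy ansatz. Substituting $S(z) = std^2(z)\cdot Id$, the integrand becomes $\sqrt{\sqrt{S}\cdot std^2(z)\cdot Id\cdot\sqrt{S}} = std(z)\sqrt{S}$ (using $std(z)\geq 0$ and that $\sqrt{S}$ is a well-defined positive-semidefinite square root commuting with the scalar $std^2(z)\cdot Id$), so the equation reduces to
\begin{equation*}
S = \left(\int std(z)\,dv(z)\right)\sqrt{S}.
\end{equation*}
I would then check that $S = \left(\int std(z)\,dv(z)\right)^2 Id$ solves this: writing $\sigma := \int std(z)\,dv(z) > 0$, the candidate is $S = \sigma^2 Id$, whose principal square root is $\sqrt{S} = \sigma\, Id$, and indeed $\sigma^2 Id = \sigma\cdot(\sigma\, Id)$. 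Since Theorem~\ref{thm: Gaussian covariance} guarantees the barycenter is unique in the non-degenerate case, this is \emph{the} barycenter, and it is the isotropic Gaussian with standard deviation $\sigma = \int std(z)\,dv(z)$, as claimed.

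The only subtlety to be careful about—but not a genuine obstacle—is that the fixed-point equation $S = \sigma\sqrt{S}$ could in principle admit non-isotropic solutions, so one should not merely \emph{exhibit} the isotropic solution but invoke uniqueness from Theorem~\ref{thm: Gaussian covariance} to conclude it is the only one; alternatively, one can argue that any positive-semidefinite $S$ satisfying $S = \sigma\sqrt{S}$ must have every eigenvalue $\lambda$ satisfying $\lambda = \sigma\sqrt{\lambda}$, forcing $\lambda \in \{0,\sigma^2\}$, and then rule out $\lambda = 0$ because the barycenter of Gaussians with at least one non-degenerate conditional is itself non-degenerate (its covariance dominates a positive multiple of the non-degenerate $S(z)$). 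I expect no real difficulty here; the corollary is essentially an immediate computation once Theorem~\ref{thm: Gaussian covariance} is in hand.
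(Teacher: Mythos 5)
Your approach matches the paper's, which states the proof in one line: substitute $S(z)=std^2(z)\,Id$ into Theorem~\ref{thm: Gaussian covariance}. Your computation $\sqrt{\sqrt{S}\,std^2(z)\,Id\,\sqrt{S}}=std(z)\sqrt{S}$ is correct, and you rightly flag that one must not merely exhibit $S=\sigma^2 Id$ as a fixed point but show it is \emph{the} barycenter's covariance. However, your first-mentioned resolution is insufficient: uniqueness of the barycenter (Theorem~\ref{thm: Gaussian covariance}) does \emph{not} imply uniqueness of solutions to the fixed-point equation $S=\sigma\sqrt{S}$. That equation is a necessary, not sufficient, condition—any PSD matrix with eigenvalues in $\{0,\sigma^2\}$ (e.g.\ $S=0$, or $\sigma^2$ times any orthogonal projection) solves it. Knowing the barycenter is unique and that it solves the equation does not by itself tell you which solution it is.

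Your alternative route—eigenvalues must lie in $\{0,\sigma^2\}$ and then rule out $0$—is the right idea, but the parenthetical justification (``its covariance dominates a positive multiple of the non-degenerate $S(z)$'') is asserted rather than proved, and is not an obviously available fact in the paper's toolkit. Two cleaner ways to close the gap: (i) by rotational symmetry of the covariance part of the barycenter problem when all $S(z)$ are isotropic (the mean $\overline{x}(z)$ decouples), the unique barycenter covariance must itself be isotropic, hence either $0$ or $\sigma^2 Id$, and $0$ is ruled out since it does not minimize; or (ii) compute directly using the Gaussian $W_2$ formula: the total cost as a function of the barycenter covariance $T$ reduces, after diagonalizing, to $\sum_i\big(\lambda_i-2\sigma\sqrt{\lambda_i}\big)+\text{const}$, and each summand has the unique minimizer $\lambda_i=\sigma^2>0$. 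Route (ii) in particular makes the corollary self-contained and avoids reliance on the fixed-point equation altogether.
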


\begin{proof}
Insert $S(z) = std^2(z) \cdot Id$ into Theorem \ref{thm: Gaussian covariance}.
\end{proof}

It follows that $V(\mu) = std^2$ is proportional to $\int std^2(z) dv = \int V(\rho(x|z)) dv(z)$.\\

Meanwhile, the intuition (\ref{reconstruction loss proportional to conditional variability}) can be justified by the following calculation:
\begin{align*}
\int Var(\rho(x|z)) dv(z) &= \frac{1}{2} \iiint \|x-y\|^2 d\rho(x|z) d\rho(y|z) dv(z)\\
&= \frac{1}{2} \iiint \|x-y\|^2 d\rho(y|z) dp(z|x) d\rho(x)\\
&= \frac{1}{2} \E_{\rho(x)} \E_{p(z|x)} \E_{\rho(y|z)} \big[ \|x-y\|^2 \big]\\
&= \frac{1}{2} \E_{\rho(x)} \E_{\text{encoder}} \E_{\text{decoder}} \big[ \text{pointwise reconstruction error } c(x,y) \big],
\end{align*}
where we interpret the conditional label distribution $p_{\theta}(z|x_i)$ as the encoder and the conditional density $\rho(x|z_i) = S_z\#\mu$ as the probabilistic decoding of $x_i$. The last term above is exactly the reconstruction loss of a stochastic autoencoder, and has been used as objective function by models such as the Wasserstein autoencoder (WAE) \cite{tolstikhin2017wasserstein}.

We conclude that the barycenter's variability is positively correlated with the reconstruction loss of the encoder $p_{\theta}(z|x)$ and decoder $S_z\#\mu$, thus verifying the analogy between unsupervised BaryNet and autoencoders. Nevertheless, a positive correlation does not imply equivalence. For instance, for the clustering problem with $Z=\{1,\dots K\}$, the classical autoencoder's reconstruction loss reduces to the sum of within-cluster variances and the algorithm becomes $k$-means. Yet, \cite{yang2019clustering} shows empirically that minimizing the barycenter's variance leads to algorithms more robust than $k$-means.

\begin{remark}
This correlation was foreshadowed by \cite{tabak2018explanation}, which studied the primal problem (\ref{clustering primal}) of factor discovery. Suppose that all $\rho(x|z)$ have the same shape (i.e. equal up to translations), then the transport maps $T_z$ are simplified into (\ref{first-moment approximation transport}), and factor discovery reduces to finding a principal surface (hypersurface). Specifically, problem (\ref{clustering primal}) can be written as
\begin{align}
\nonumber
\min_{p(z|x)} Var(\mu) &= \min_{p(z|x)} \int Var\big(\rho(x|z)\big) dv(z)\\
\nonumber
&= \min_{p(z|x)} \min_{m(z)} \iint ||x-m(z)||^2 d\rho(x|z) dv(z)\\
\label{principal surface objective}
&= \min_{p(z|x)} \min_{m(z)} \iint ||x-m(z)||^2 dp(z|x) d\rho(x)
\end{align}
This minimization problem can be solved by alternating descent, using the following updates:
\begin{align*}
p(z|x) &\gets \delta_{z(x)}, ~z(x) = \text{argmin}_{z\in Z} ||x-m(z)||^2\\
m(z) &\gets \tilde{x}(z) = \E_{\rho(x|z)}[x]
\end{align*}
Thus, we recover the principal surface algorithm \cite{hastie2005elements}, which produces the hypersurface $m(z)$ that summarizes the data $\rho(x)$. Meanwhile, formula (\ref{principal surface objective}) is also the objective function of the undercomplete autoencoder (with a probabilistic encoder) \cite{goodfellow2016deep}. Hence, the undercomplete autoencoder can be seen as a first-moment approximation of factor discovery (when we only care about the differences in the means of $\rho(x|z)$ and restrict the transport maps to the rigid translations (\ref{first-moment approximation transport})).
This equivalence is more pronounced in the linear setting: assuming further that $z(x)$ or $m(z)$ is linear, then factor discovery (\ref{first-moment approximation transport}) reduces to principal component analysis \cite{tabak2018explanation}, while the autoencoder without nonlinearity also becomes PCA \cite{goodfellow2016deep}.
\end{remark}

Naturally, the next step is to apply the regularization techniques of autoencoders to BaryNet, as we have already explored the undercomplete autoencoder in Section \ref{sec: label net}. One popular regularization requires that the latent distribution $v(z)$ of any labeling $\rho(x,z)$ must match some prescribed distribution $P_Z$ (e.g. a unit Gaussian in $Z=\R^k$). Then, the task of the autoencoder reduces to finding a coupling $\rho(x,z)$ between $\rho(x)$ and $P_Z$, such that the encoder $\rho(z|x)$ and decoder $\rho(x|z)$ minimize the reconstruction loss. The motivation of this regularization is that $v = P_Z$ becomes easier to sample, and together with the decoder $\pi(x|z)$, it makes possible the random sampling of $\rho(x)$. Often, the requirement that $v(z) = P_Z$ is relaxed, replacing it by a penalty on some distance between $v(z)$ and $P_Z$ \cite{tolstikhin2017wasserstein}.

This regularization technique was introduced by the Adversarial autoencoder (AAE) \cite{makhzani2015adversarial}, which refers to $P_Z$ as the \textit{prior} and the latent distribution $v(z)$ as the \textit{aggregated posterior}. AAE is based on the Variational autoencoder (VAE) \cite{kingma2013auto}, which penalizes the KL-divergence between the conditional latent distribution $p(z|x)$ and $P_Z$,
$$\E_{\rho(x)} \big[ D_{KL}(p(z|x),P_Z) \big]$$
and AAE replaces this penalty by
\begin{equation*}
D_{GAN} (v(z),P_Z) := \max_{D(z)} \E_{P_z}\big[\log D(z)\big] + \E_{v(z)}\big[\log(1-D(z))\big]
\end{equation*}
This penalty is taken from GAN, and $D(z)$ is a discriminator that estimates the likelihood that a given $z$ comes from $v(z)$ or $P_z$.\\

By applying the regularization $v(z) = P_z$ to the factor discovery problem, we obtain
\begin{equation*}
\min_{\substack{\rho(x,z)\in P(X\times Z)\\
\rho_X = \rho(x), \rho_Z = P_Z}}
V(\text{barycenter of } \rho(x,z)).
\end{equation*}
Then, we derive a problem analogous to unsupervised BaryNet (\ref{data-based factor discovery objective}),
\begin{align}
\label{barycentric autoencoder}
\begin{split}
\sup_{\theta} \inf_{\tau} \sup_{\xi} L(\theta,\tau,\xi)
=& \E_{\rho(x)} \E_{p_{\theta}(z|x)} \big[ c\big(x,T_{\tau}(x,z)\big) - \psi^Y_{\xi} \big(T_{\tau}(x,z)\big) \tilde{\psi}^Z_{\xi} \big(z\big) \big]\\
&- \big[\E_{\rho(x)} \E_{p_{\theta}(z|x)} \phi_{\tau}(z) - \E_{P_Z} \phi_{\tau}(z) \big]\\
\tilde{\psi}^Z(z) :=& \psi^Z(z) - \E_{\rho(x)} \E_{p_{\theta}(\tilde{z}|x)} \psi^Z(\tilde{z}),
\end{split}
\end{align}
which we call Barycentric autoencoder (BAE). The additional term in (\ref{barycentric autoencoder}) is equivalent to
$$\inf_{\theta}\sup_{\tau} \E_{(v_{\theta}-P_Z)(z)} \big[ \phi_{\tau}(z) \big]$$
and serves as a discriminator that enforces $v=P_Z$. Alternatively, we can use the $f$-divergences or the maximum mean discrepancy \cite{gretton2012kernel,tolstikhin2017wasserstein} to penalize the difference between $v$ and $P_Z$.

Once the barycenter $\mu \approx \{y_i\}_{i=1}^N$ and inverse transport map $S(y,z)$ are computed, $\rho(x,z)$ can be estimated through
\begin{equation*}
\rho(x,z) = S\# (\mu \otimes P_Z) \approx \{S(y_i,z_j)\}_{i=1,\dots N}^{j=1,\dots M},
\end{equation*}
where $\{z_j\}_{j=1}^M$ is any sample drawn from $P_Z$. An advantage of BAE is that since the distribution $P_Z$ is known, there is an unlimited supply of $\{z_j\}$, and thus unlimited samples for $\rho(x,z)$ and $\rho(x)$.

\subsection{Semi-supervised factor discovery}
\label{sec: semi-supervised factor discovery}
Let us mention briefly that BaryNet can be adapted to the semi-supervised setting. Since the supervised (\ref{data-based barycenter objective}) and unsupervised (\ref{data-based factor discovery objective}) differ only in the freedom to choose $p_{\theta}(z|x)$, it is straightforward to modify $p(z|x)$ to be semi-supervised in the following two scenarios, which we demonstrate in the deterministic case (\ref{deterministic factor discovery objective}) with $z_{\theta}$.

In the first scenario, only a subset of the sample has labels. In this case, we have a labeled sample $\{x_i^1,z_i\}_{i=1}^N$ and an unlabeled sample $\{x_j^2\}_{j=1}^M$. Then, problem (\ref{deterministic factor discovery objective}) assigns labels or latent variables $z_j$ to the unlabeled sample through
\begin{align}
\label{semisupervised factor discovery objective}
\begin{split}
\sup_{\theta} \inf_{\tau} \sup_{\xi} L(\theta,\tau,\xi)
=& \frac{\lambda}{N} \sum_{i=1}^N \big[ c\big(x_i^1,T_{\tau}(x_i^1,z_i)\big) - \psi^Y_{\xi} \big(T_{\tau}(x_i^1,z_i)\big) \tilde{\psi}^Z_{\xi} \big(z_i\big) \big]\\
&+\frac{1-\lambda}{M} \sum_{j=1}^M \big[ c\big(x^2_j,T_{\tau}(x^2_j,z_{\theta}(x^2_j))\big) - \psi^Y_{\xi} \big(T_{\tau}(x^2_j,z_{\theta}(x^2_j))\big) \tilde{\psi}^Z_{\xi} \big(z_{\theta}(x^2_j)\big) \big]\\
\tilde{\psi}^Z(z) :=& \psi^Z(z) - \frac{\lambda}{N} \sum_{i=1}^N \psi^Z(z_i) - \frac{1-\lambda}{N} \sum_{j=1}^M \psi^Z(z_{\theta}(x^2_j))
\end{split}
\end{align}
where $\lambda \in (0,1)$ is a constant that weights the two samples.

In the second scenario, only some of the labels $z^1$ are provided, while others $z^2$ are hidden. This scenario was introduced in \cite{tabak2018explanation} as factor discovery with confounding variables: we are given a labeled sample $\{x_i,z_i^1\}$ and need to discover a hidden label $\{z_i^2\}$. For instance, $x_i$ can be climate data, $z^1_i$ can be time of the year, and the uncovered $z^2_i$ could correspond to a priori unknown climate patterns such as El Ni\~no. Then, (\ref{deterministic factor discovery objective}) assigns the label $z^2$ through
\begin{align*}
\begin{split}
\sup_{\theta} \inf_{\tau} \sup_{\xi} L(\theta,\xi,\tau) &= \frac{1}{N} \sum_{i=1}^N \Big[ c\big(x_i,T_{\tau}(x_i,(z^1_i,z^2_{\theta}(x_i)))\big) - \psi^Y_{\xi}\big(T_{\tau}(x_i,(z^1_i,z^2_{\theta}(x_i)))\big) \tilde{\psi}_{\xi}^Z \big( (z^1_i,z_{\theta}^2(x_i)\big) \Big]\\
\tilde{\psi}^Z(z) &:= \psi^Z(z) - \frac{1}{N} \sum_{i=1}^N \psi^Z\big((z_i^1,z_{\theta}^2(x_i))\big).
\end{split}
\end{align*}

\subsection{Optimization}
We train BaryNet by gradient descent. Since (\ref{data-based barycenter objective}) and (\ref{data-based factor discovery objective}) are min-max problems, we need optimization algorithms that are guaranteed to converge to the saddle points. Note that na\"{i}ve methods such as gradient descent-ascent fails to converge \cite{essid2019implicit} even for elementary problems such as $\inf_x \sup_y x\cdot y$.
Among the known saddle point algorithms, we adopt the optimistic mirror descent (OMD) \cite{mertikopoulos2019optimistic}, because it is straightforward to implement, and the quasi implicit twisted descent (QITD) \cite{essid2019implicit}, because it automatically adjusts the learning rate and can accelerate training.
For reference, the algorithms OMD and QITD are listed in Appendix \ref{appendix: saddle point algorithms}.

It is common for saddle point algorithms to assume some convexity condition, e.g. the objective function is (locally) quasiconvex-quasiconcave \cite{essid2019implicit}. Then, minimax theorems such as Sion's \cite{simons1995minimax} imply that (locally) the minimization and maximization can be interchanged. In fact, one can check that neither OMD nor QITD discriminate between
$$\inf_{x} \sup_y F(x,y)$$
and
$$\sup_{y} \inf_{x} F(x,y) = -\inf_y \sup_x \left(-F(x,y)\right),$$
since their update steps for the two problems are the same. Hence, whenever we apply these saddle point algorithms, we are allowed to exchange the inf and sup, so the factor discovery problem (\ref{data-based factor discovery objective}) can be modified into
\begin{equation}
\label{minmax factor discovery}
\sup_{\theta} \inf_{\tau} \sup_{\xi} L(\theta,\tau,\xi) = \inf_{\tau} \sup_{\theta,\xi} L(\theta,\tau,\xi)
\end{equation}

\begin{remark}
It is possible to avoid the min-max in problem (\ref{data-based barycenter objective}) by using the maximum mean discrepancy (MMD) \cite{gretton2012kernel}. The max in (\ref{data-based barycenter objective}) arises from the discriminators $\psi^Y(y)\psi^Z(z)$, which enforce the independence $\pi_{YZ} = \pi_Y \otimes \pi_Z$. We can instead penalize the MMD between $\pi_{YZ}$ and $\pi_Y\otimes\pi_Z$: let $k(y_1,y_2),h(z_1,z_2)$ be characteristic kernels on $Y,Z$, then define
%
\begin{align*}
MMD(\pi_{YZ},\pi_Y\otimes\pi_Z)
&= \E_{[\pi_{YZ}-\pi_Y\otimes\pi_Z]^{\otimes 2}(y,z,y',z')}\big[k(y,y')h(z,z')\big]\\
&\approx \frac{1}{N(N-1)}\sum_{i=1}^N \sum_{n=1}^N k(y_i,y_n) h(z_i,z_n) - \frac{2}{N^3}\sum_{i=1}^N \Big( \sum_{m=1}^N k(y_i,y_m) \cdot \sum_{n=1}^N h(z_i,z_n) \Big)\\
& \quad + \frac{1}{N^2(N^2-1)}\sum_{i,j=1}^N k(y_i,y_m) \cdot \sum_{m,n=1}^N h(z_j,z_n)
\end{align*}
where $\{y_i,z_i\}_{i=1}^N$ is a sample of $\pi_{YZ}$. Thus, (\ref{data-based barycenter objective}) simplifies to a minimization problem. A disadvantage, however, is increased computational time. Computing the objective function in (\ref{data-based barycenter objective}) takes only $O(N)$ time, whereas the above estimator for MMD takes $O(N^2)$ time. It is possible to use subsampling to reduce both the sample size of $\pi_{Y}\otimes \pi_{Z}$ and the cost of MMD down to $O(N)$, giving $O(N)$ time in total, but at the expense of increasing the variance of the estimator.
\end{remark}

\section{Test results}
\label{sec: test results}
In the following sections, we test BaryNet on real and artificial datasets. Section \ref{sec: artificial test} uses supervised BaryNet (\ref{data-based barycenter objective}) on synthetic conditional density estimation problems to verify its effectiveness. Section \ref{sec: seismic test} uses unsupervised BaryNet (\ref{data-based factor discovery objective}) on latent factor discovery problems, discovering meaningful hidden variables in climate and earthquake data. Section \ref{sec: climate test} offers a closer look into the functioning of BaryNet, showing how it learns the dependency of data $x_i$ on label $z_i$ and uncovers patterns in climate data. Finally, Section \ref{sec: color transfer} applies the transport maps $T_z,S_z$ to color transfer. All tests were conducted using \texttt{PyTorch}. See Appendix \ref{appendix: implementation} for the network architecture and training parameters of each experiment.

The BaryNets were trained using either QITD or OMD (Appendix \ref{appendix: saddle point algorithms}). QITD is a second-order method and can automatically adjust its learning rate to accelerate training, but its time complexity is $O(TD^2)$, where $T$ is training time and $D$ is the dimension of the model's parameters. Whereas OMD is a first-order method with time complexity $O(TD)$. In Sections \ref{sec: artificial test} and \ref{sec: climate test}, we use QITD algorithm in order to obtain better convergence. In Sections \ref{sec: seismic test} and \ref{sec: color transfer}, we apply OMD because the networks being trained are large so OMD could be more efficient.

\subsection{Artificial data}
\label{sec: artificial test}
In order to evaluate supervised BaryNet's performance on conditional density estimation, we devise a sample with known conditional distributions. Set $X=Y=\R^2, Z=\R$, and $c=\|x-y\|^2$. The data $\{x_i,z_i\}$ consists of 500 points drawn from the distribution $\rho(x|z)v(z)$ where $v(z)$ is uniform over $[-1,1]$ and each $\rho(x|z)$ consists of a mixture of two Gaussians,
$$\rho(x|z) = \frac{1}{2}\mathcal{N}\left( \begin{bmatrix} (z+1)/2 \\ -(z+1)/2 \end{bmatrix},
\begin{bmatrix}0.1 & \\  & 0.1 \end{bmatrix}
\right) +
\frac{1}{2}\mathcal{N}\left( \begin{bmatrix}-(z+1)/2 \\ (z+1)/2 \end{bmatrix},
\begin{bmatrix}0.1 & \\  & 0.1 \end{bmatrix} \right).$$
%
%
Below are the results of applying supervised BaryNet (\ref{data-based barycenter objective}) with the QITD algorithm.
\begin{figure}[H]
\centering
\subfloat{\includegraphics[scale=0.3]{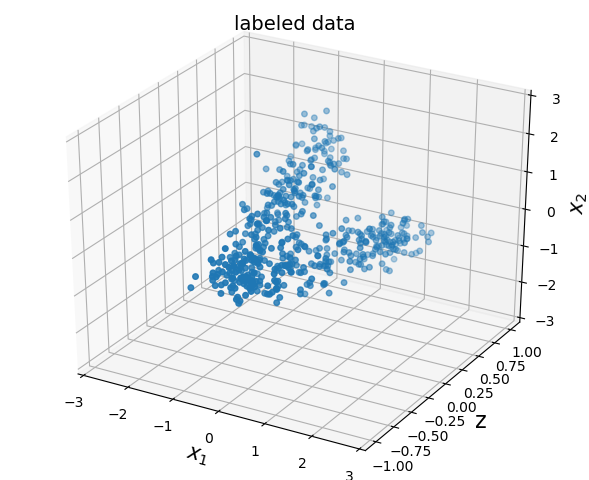}}
\subfloat{\includegraphics[scale=0.3]{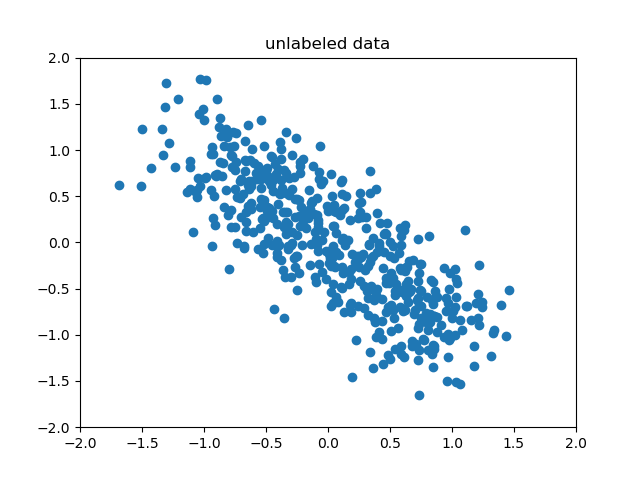}}
\subfloat{\includegraphics[scale=0.3]{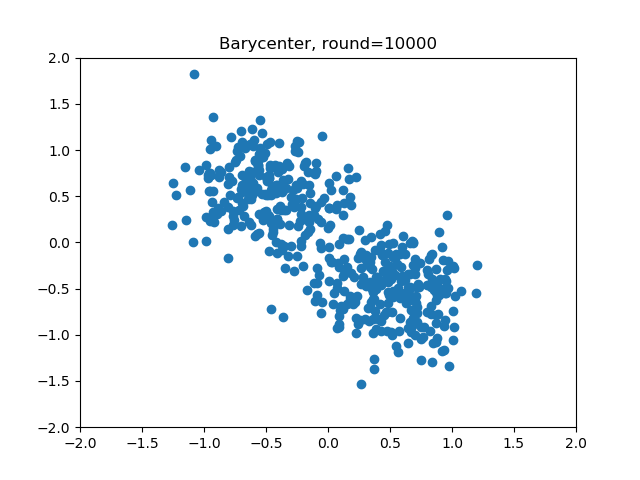}}
\caption{Left: the labeled sample $\rho(x,z) \approx \{x_i,z_i\}$. Middle: the $X$ marginal, $\rho(x)\approx\{x_i\}$. Right: the barycenter $\mu\approx\{y_i = T(x_i,z_i)\}$ produced by BaryNet.}
\end{figure}
Then, the inverse transport map $S(y,z)$ is computed from (\ref{inverse transport by regression}) using SGD. Below, the conditional distributions $\rho(x|z) \approx \{S(y_i,z)\}$ recovered by BaryNet (in orange) are compared with samples of the same size drawn from the true distribution $\rho(x|z)$ (in blue).
\begin{figure}[H]
\centering
\subfloat{\includegraphics[scale=0.3]{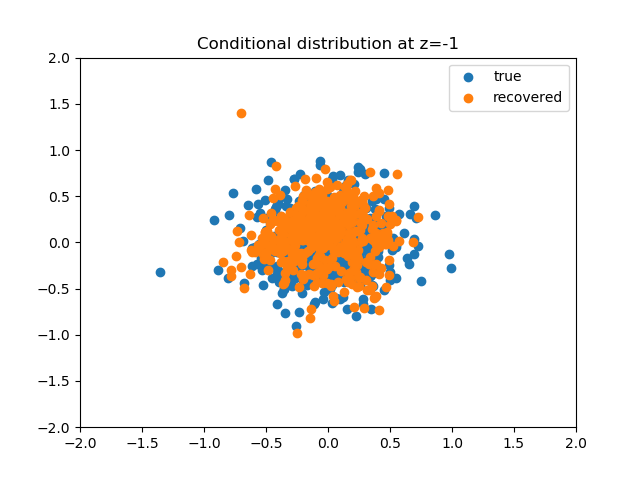}}
\subfloat{\includegraphics[scale=0.3]{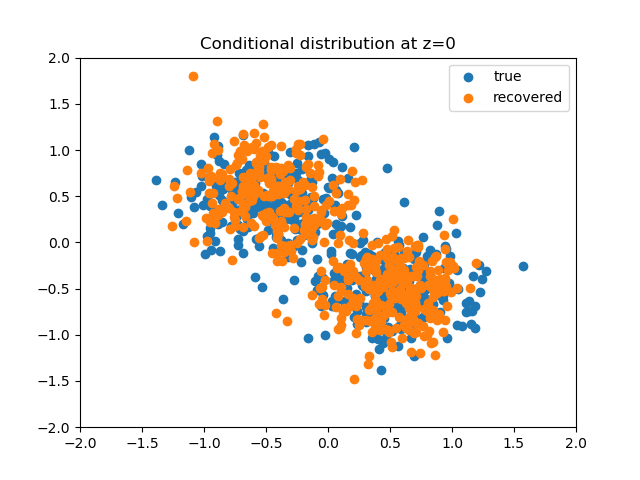}}
\subfloat{\includegraphics[scale=0.3]{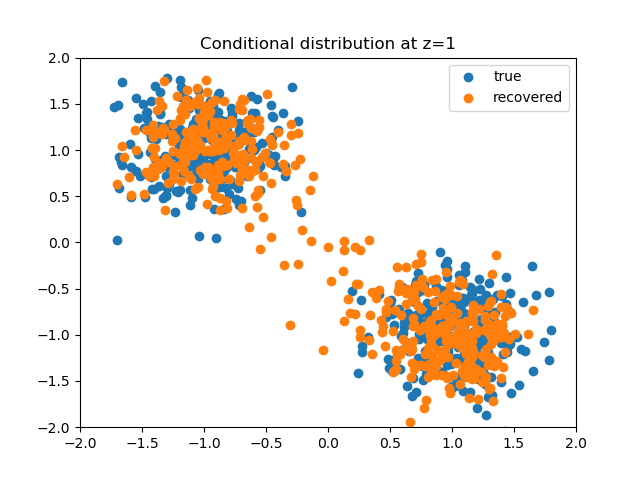}}
\caption{Left: $\rho(x|z=-1)$. Middle: $\rho(x|z=0)$. Right: $\rho(x|z=1)$. All of them show a close match.}
\end{figure}
\noindent
We see that BaryNet can reliably recover the conditional distributions $\rho(x|z)$. In particular, its performance does not deteriorate in the extreme cases with $z=\pm 1$, at the two endpoints of the support of $v(z)$.

The plot below displays the evolution of the objective $L$ from (\ref{data-based barycenter objective}) during training.
\begin{figure}[H]
\centering
\includegraphics[scale=0.66]{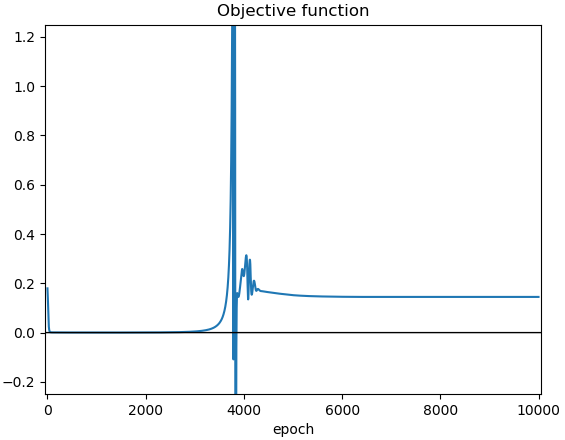}
\caption{The objective function $L$ during training.}
\end{figure}
As the test function $\psi^Y\psi^Z$ is initially set to zero, the cost term $\E [c(x,T(x,z))]$ in (\ref{data-based barycenter objective}) dominates, so the transport map remains near the identity $T(x,z) \approx x$. Then, as $\psi^Y\psi^Z$ is trained and becomes increasingly discriminative, the objective $L$ rises. The transport map $T$ responds and the training enters a brief oscillatory period, corresponding to a competitive stage of the ``game'' performed by the map and the discriminator. When BaryNet converges to the right solution, $L$ becomes flat. Faster convergence can be achieved by preconditioning $\psi$ so that it acts from the very beginning, or implementing a two-time-scale training scheme, with the test function $\psi^Y\psi^Z$ in $\inf_T \sup_{\psi^Y\psi^Z} L$ trained faster than $T$ \cite{lin2019gradient}.

\subsection{Continental climate and seismic belt}
\label{sec: seismic test}

To evaluate unsupervised BaryNet's performance on latent factor discovery, we apply it to real-world data that has a meaningful latent variable and test whether BaryNet can discover it. The first dataset is the average daily temperature recorded from 56 stations across U.S. in the ten-year period $[2009,2019)$, provided by NOAA \cite{NOAA2019daily}. The sample space is $X=Y=\R^{56}$ and each $x_i$ represents the temperature distribution in U.S. at a particular date. The cost is set to be $c=\|x-y\|^2$. An intuitive latent variable would be the seasonal effect, represented by the time of the year
\begin{equation}
\label{time of year}
\cos\Big[ \frac{2\pi}{365}(\text{date}-n) \Big],
\end{equation}
where $n$ is the coldest day in the year (around January 15 in U.S.). Thus, the latent space is $Z=\R$. We apply unsupervised BaryNet (\ref{deterministic factor discovery objective}) in its min-max formulation (\ref{minmax factor discovery}) and train it by the OMD algorithm. As argued in Section \ref{sec: label net}, we restrict the label net $z_{\theta}(x)$ to be Lipschitz, using the clamp function introduced by \cite{arjovsky2017wasserstein}, and set its last layer to be bias-free.

Below are the results of BaryNet on the temperature data $\{x_i\}$. The discovered latent variable $\{z_i\}$ exhibits periodicity in time and a strong correlation with (\ref{time of year}), with a Pearson correlation of $0.9686$, indicating that BaryNet has discovered the seasonal effect, from an input that does not contain any information on time. The non-periodic component of the discovered $z$, a multiyear modulation with a scale in the order of four years, is consistent with El Ni\~no Southern Oscilation.

\begin{figure}[H]
\centering
\includegraphics[scale=1.1]{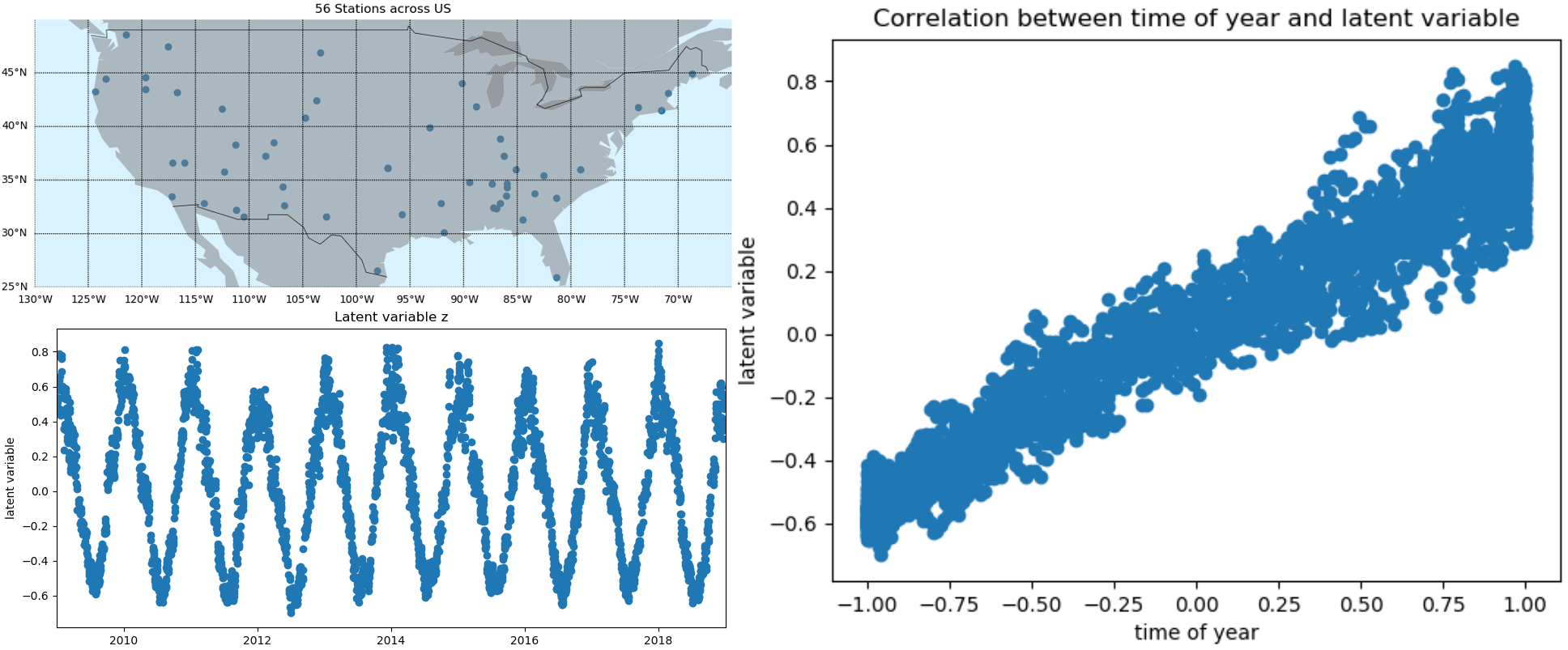}
\caption{Up left: The chosen weather stations. Down left: The latent variables $z_i$ discovered by BaryNet, plotted against time. Right: Scatter plot between the time of the year (\ref{time of year}) and $z_i$.}
\end{figure}

The second dataset consists of earthquakes' coordinates. The data is taken from USGS \cite{USGS2008earthquake}, which records historical earthquakes from 1900 to 2008. We focus on earthquakes that occurred on the Peru-Chile Trench (see Figure \ref{fig: earthquake test} below). The earthquakes' locations are represented in spherical coordinates, so the sample space has $X=Y=S^2$, and we set the cost function $c$ to be the squared great circle distance
$$c = d^2, ~d\big((x^1,x^2),(x^1_*,x^2_*)\big) = \arccos\big(\sin x^2 \sin x^2_* + \cos x^2 \cos x^2_* \cos(x^1-x^1_*)\big)$$
where $x^1,x^2$ are longitude and latitude. Judging from the earthquake plot in Figure \ref{fig: earthquake test}, since the earthquakes are distributed roughly vertically along the Peru-Chile Trench, it is intuitive that the (one-dimensional) latent variable should be proportional to the earthquake's latitude.

We apply unsupervised BaryNet (\ref{minmax factor discovery}) and train it by OMD. The discovered latent variable $\{z_i\}$ shows a strong correlation with the latitude, with a correlation of $0.9954$, indicating that BaryNet has discovered the seismic belt.
\begin{figure}[H]
    \centering
    \subfloat{\includegraphics[scale=0.52]{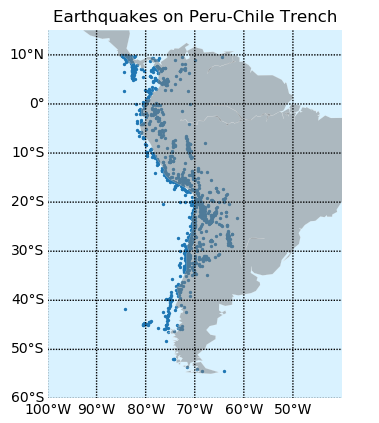}}
    \subfloat{\includegraphics[scale=0.5]{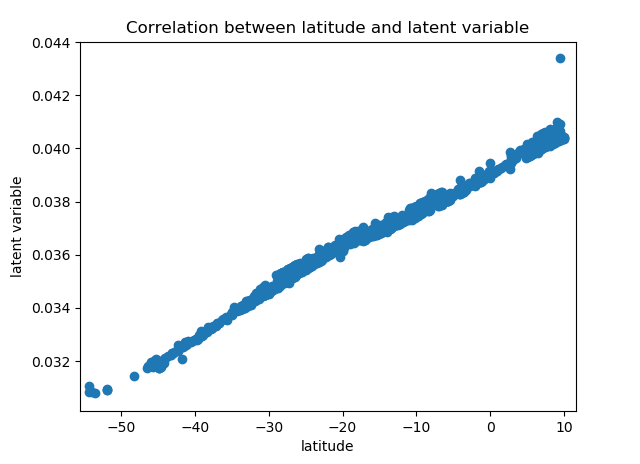}}
    \caption{Left: Earthquakes on Peru-Chile Trench. Right: Scatter plot between the earthquake's latitude and the latent variables $z_i$ discovered by BaryNet.}
    \label{fig: earthquake test}
\end{figure}

The label nets $z_{\theta}(x)$ for both the temperature and the earthquake tests are intentionally set to be feedforward. Unlike the residual nets, these are highly non-linear maps without any linear component, and it is difficult for them to learn linear mappings such as $(x^1,x^2)\mapsto x^2$, making it highly unlikely that BaryNet arrived at the desired solutions by chance.

\subsection{Hourly and seasonal temperature variation}
\label{sec: climate test}
As discussed in the introduction, supervised BaryNet ``learns" the data $\rho(x,z)$ by decomposing it into a representative $\mu$ of the conditional densities $\rho(x|z)$ plus the transformations between them. Equivalently, it represents $\rho(x,z)$ as $S\# (\mu \otimes v)$.
Thus, BaryNet can be seen as a ``probabilistic" generalization of regression, which learns the probabilistic mapping $z\mapsto \rho(x|z)$,  approximating it via
\begin{equation}
\label{probabilitic regression}
z\mapsto (S_{\theta}(\cdot,z)\circ T_{\tau})\#\rho = (S_{z})_{\theta}\#\mu_{\tau}.
\end{equation}
Its expressivity derives from the nonlinearity of $T_{\tau}$ and $S_{\theta}$, which enables BaryNet to learn the complex dependency of the data $x$ on the latent variable $z$.

We demonstrate this intuition using meteorological data where $x_i$ is the average hourly temperature at Ithaca, NY in the ten-year period $[2007,2017)$, provided by NOAA \cite{NOAA2019hourly}. The latent variables $z_i$ chosen are the time of day and time of year, represented by
\begin{equation}
\label{climate test feature functions}
\sin(2\pi \text{hour}/24), ~\cos(2\pi \text{hour}/24), ~\sin(2\pi \text{date}/365), ~\cos(2\pi \text{date}/365)
\end{equation}
Hence $X=Y=\R$, $Z=\R^4$, and we set  $c=(x-y)^2$.

BaryNet (\ref{data-based barycenter objective}) is trained on $\{x_i,z_i\}$ by the QITD algorithm. To facilitate visualization, the probabilistic regression (\ref{probabilitic regression}) is displayed through its conditional mean:
$$z \mapsto \E_{(S_z\#\rho)(x)}[x] \approx \frac{1}{N} \sum_{i=1}^N S_{\theta}(y_i,z)$$

We compare BaryNet with mean-square regression on $\{x_i,z_i\}$. The regression problem fits $\rho(x|z)$ under mean-square loss, and the optimizer is also the conditional mean $\E_{\rho(x|z)}[x]$, so it constitutes a valid comparison. Specifically, we perform linear regression, using the latent variables $z$ (\ref{climate test feature functions}) as features. It may seem unfair to compare the nonlinear BaryNet with a linear model, but the key is that BaryNet is solving the more difficult problem of learning the entire $\rho(x|z)$, whereas regression only learns $\E_{\rho(x|z)}[x]$, so we are satisfied as long as BaryNet performs at least as well as linear regression in terms of the conditional mean.
\begin{figure}[H]
\centering
\includegraphics[scale=1.5]{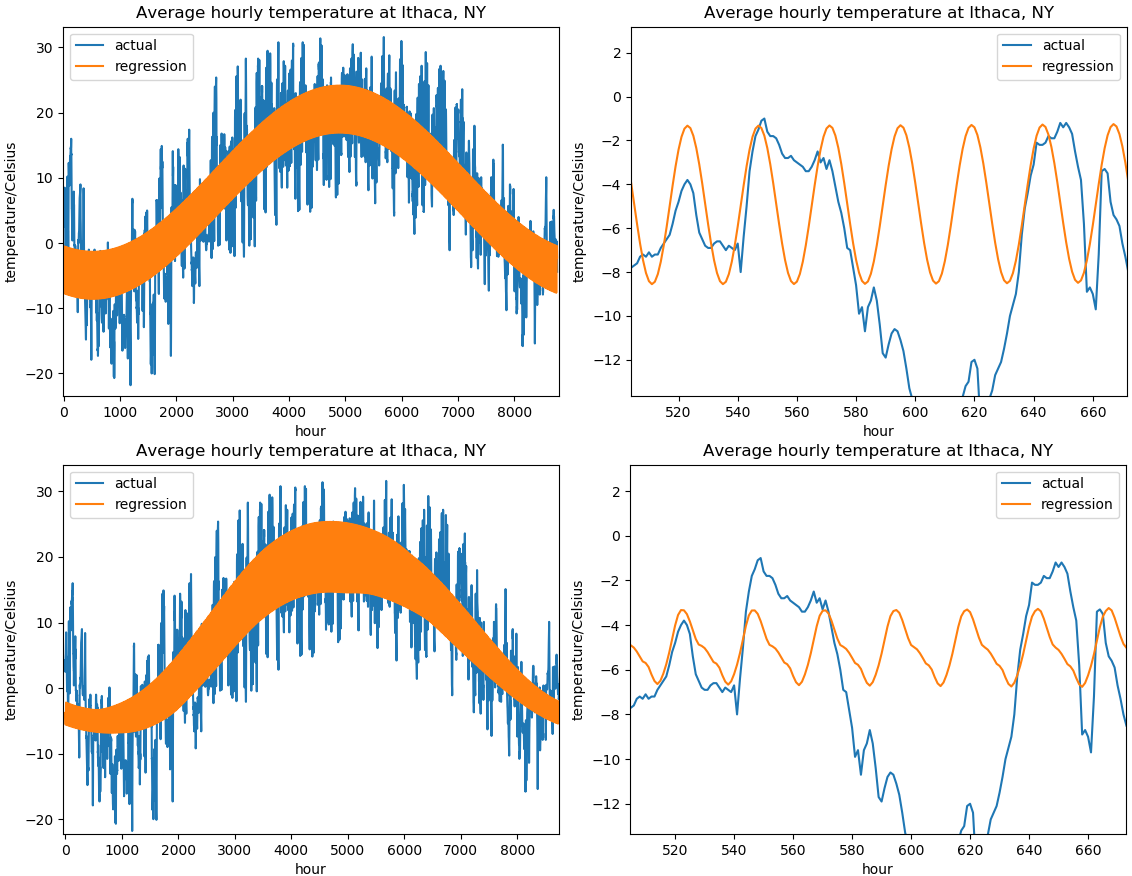}
\caption{Hourly temperature at Ithaca, NY. Left column: data for the year 2007. Right column: data for the week of Jan 21-28, 2007. Top row: Linear regression on $\{x_i,z_i\}$. Bottom row: Supervised BaryNet. The blue curve represents the temperature data, and the orange curve the regression  (for linear regression) and the conditional mean (for BaryNet). (Due to the daily oscillation, the regression curve appears as a thick band in the yearly plot). The regression curves cannot perfectly fit the data, since date and hour alone cannot fully account for the irregularity of weather systems.}
\label{fig: climate regression}
\end{figure}
BaryNet learns more features of the data than linear regression in both the yearly and weekly plots. In the yearly plot, BaryNet's curve oscillates with greater amplitude during summer than in winter, indicating that the daily temperature during summer has greater variance. In the weekly plot, BaryNet's curve is highly non-sinusoidal, and has greater upward slope during the day than downward slope at night, in agreement with the real daily cycle during winter time.

\subsection{Color transfer}
\label{sec: color transfer}
We describe briefly here a useful application of the transport maps. As a by-product of BaryNet, the transport map $T_z(x)$ and inverse transport map $S_z(y)$ can be concatenated into a transportation between any pair of conditional distributions:
\begin{equation*}
(S_{z_2} \circ T_{z_1}) \# \rho(x|z_1) = \rho(x|z_2)
\end{equation*}
Even though we can directly compute a transport map from $\rho(x|z_1)$ to $\rho(x|z_2)$, BaryNet is more efficient if one seeks all pairwise transport maps (just as we may seek all pairwise translations among several languages). If there are $K$ labels ($Z$=\{1,\dots K\}), then there will be $O(K^2)$ pairwise transport maps, whereas BaryNet only needs to compute $2K$ maps, $T_k$ and $S_k$. If $Z$ is continuous (e.g. $\R^k$), then BaryNet only needs two maps, $T(x, z)$ and $S(y, z)$, while doing individual pairwise maps is infeasible, not only because there are infinitely many of them, but also because each conditional distribution has one sample point or less.

Instead of languages, we apply this procedure to images. An image can be seen as a 3-dimensional matrix of size $3 \times H \times W$, which represent the RGB color channels, height and width (in terms of pixels). Alternatively, an image can be treated as a sample of $H \times W$ points in $\R^3$. Thus, we view the following images as color distributions, $\rho_1,\rho_2,\rho_3\in P(\R^3)$, and apply BaryNet to compute the transport maps $T_k,S_k$.
\begin{figure}[H]
\centering
\subfloat{\includegraphics[scale=0.4]{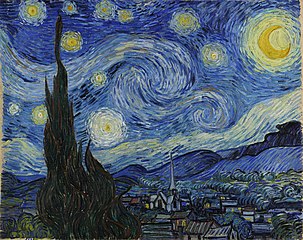}}
\subfloat{\includegraphics[scale=0.4]{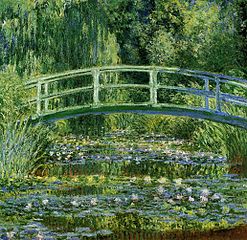}}
\subfloat{\includegraphics[scale=0.4]{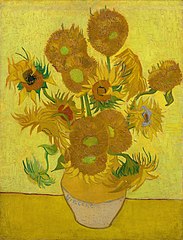}}
\subfloat{\includegraphics[scale=0.3]{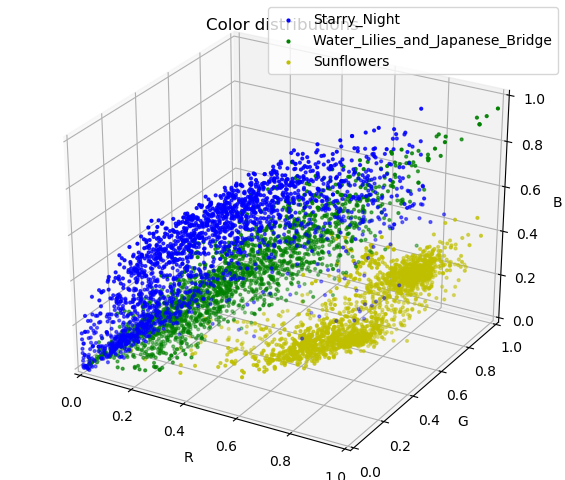}}
\caption{``Starry Night", ``Water Lilies and Japanese Bridge", ``Sunflowers" by Monet and van Gogh, and their color distributions in $\R^3$.}
\end{figure}

Then, the coloring style of image $j$ can be transferred to image $k$ by applying pixel-wise the transport map $S_j \circ T_k$ to image $k$. The results are displayed in Figure \ref{fig: color transfer} below. We used supervised BaryNet (\ref{data-based barycenter objective}) with $Z=\{1,2,3\}$ and trained it by OMD.
\begin{figure}[h]
\centering
\subfloat{\includegraphics[scale=0.12]{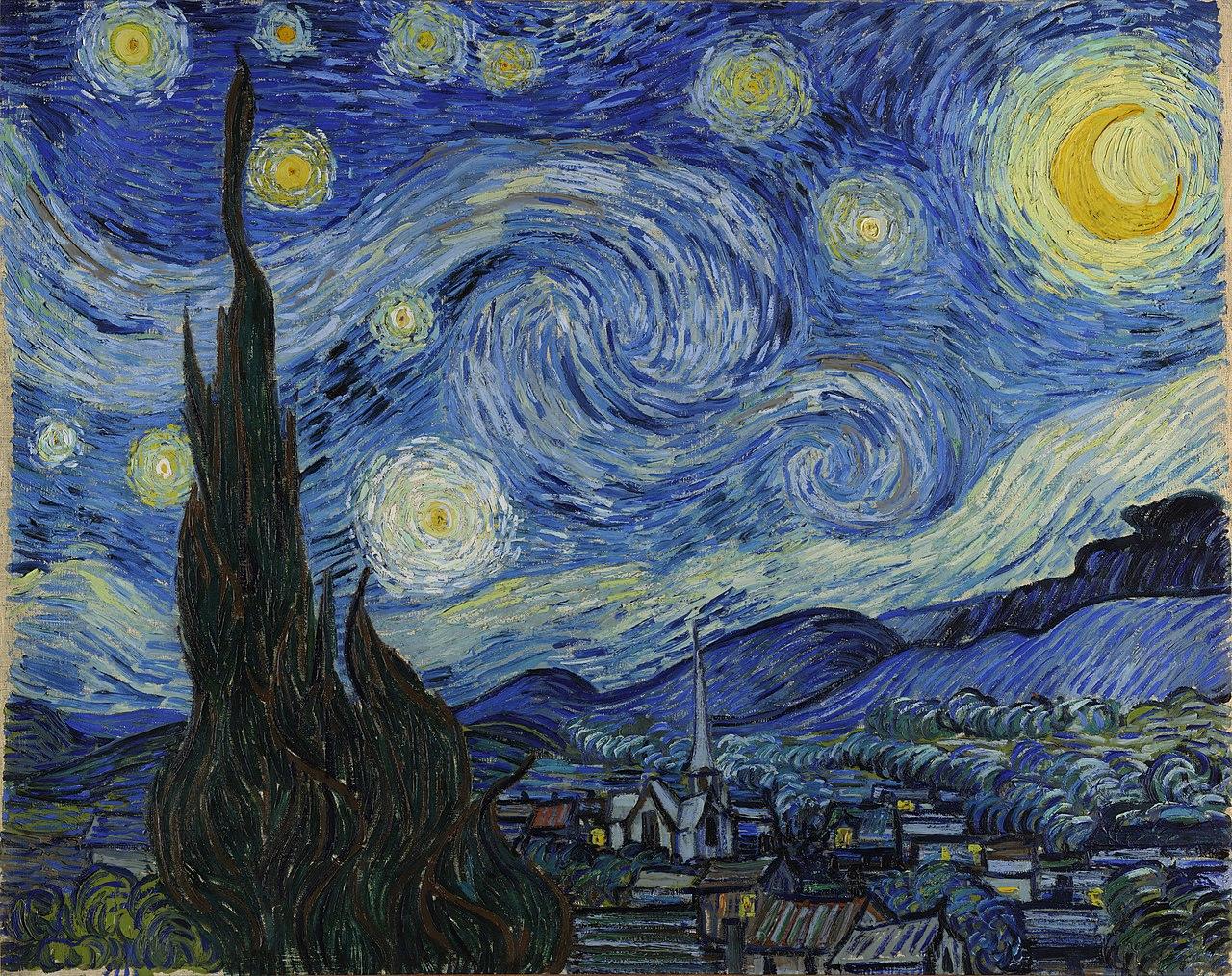}}
\subfloat{\includegraphics[scale=0.122]{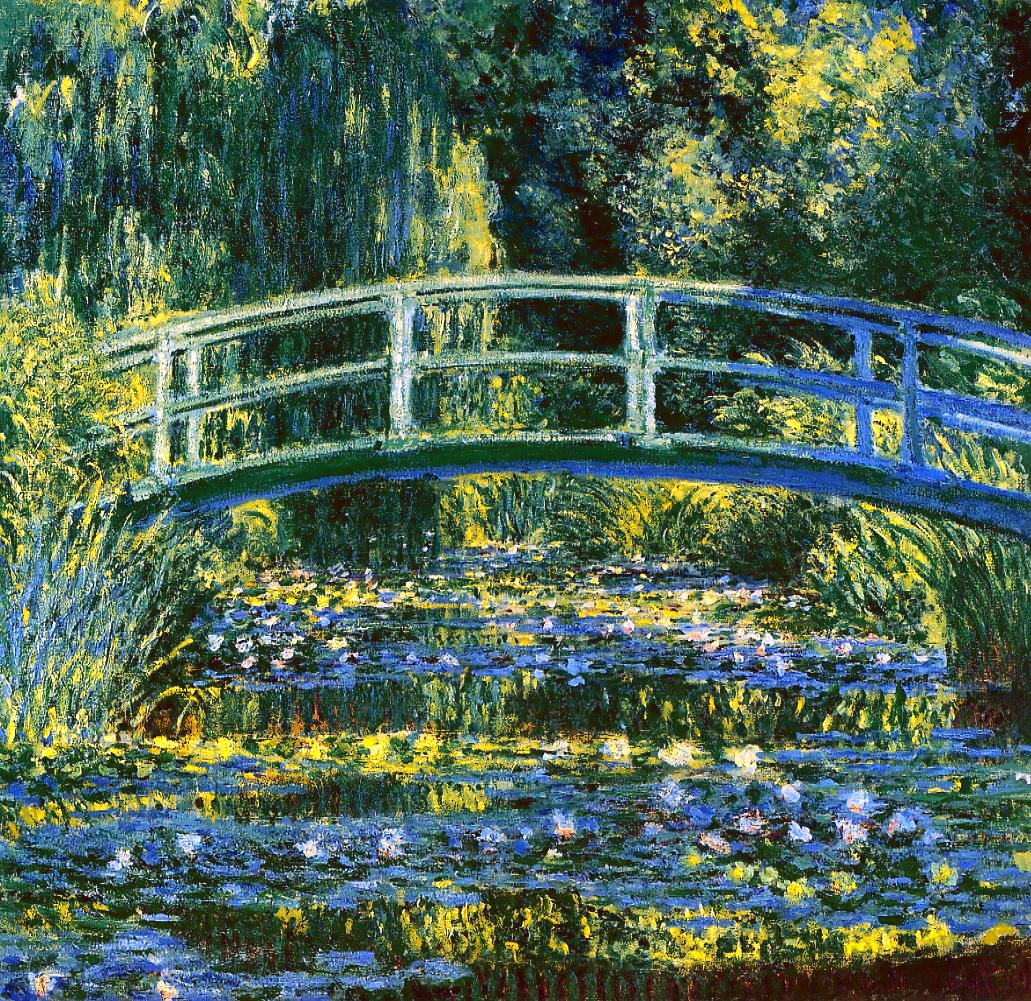}}
\subfloat{\includegraphics[scale=0.12]{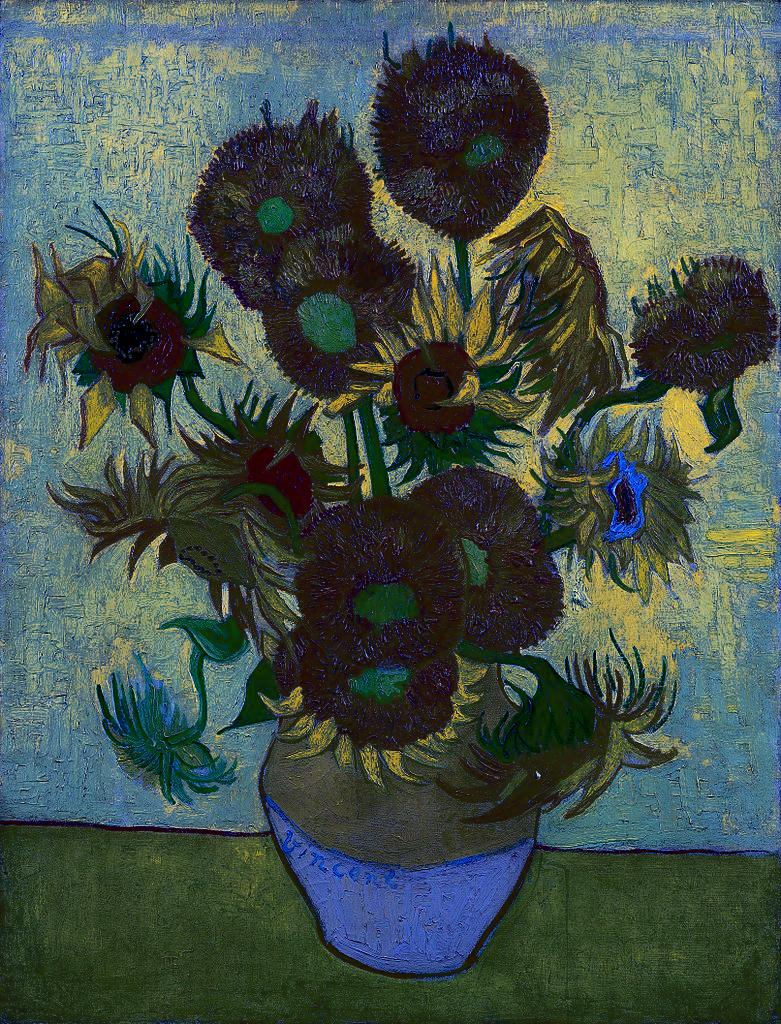}}
\vspace{-3ex}
\centering
\subfloat{\includegraphics[scale=0.12]{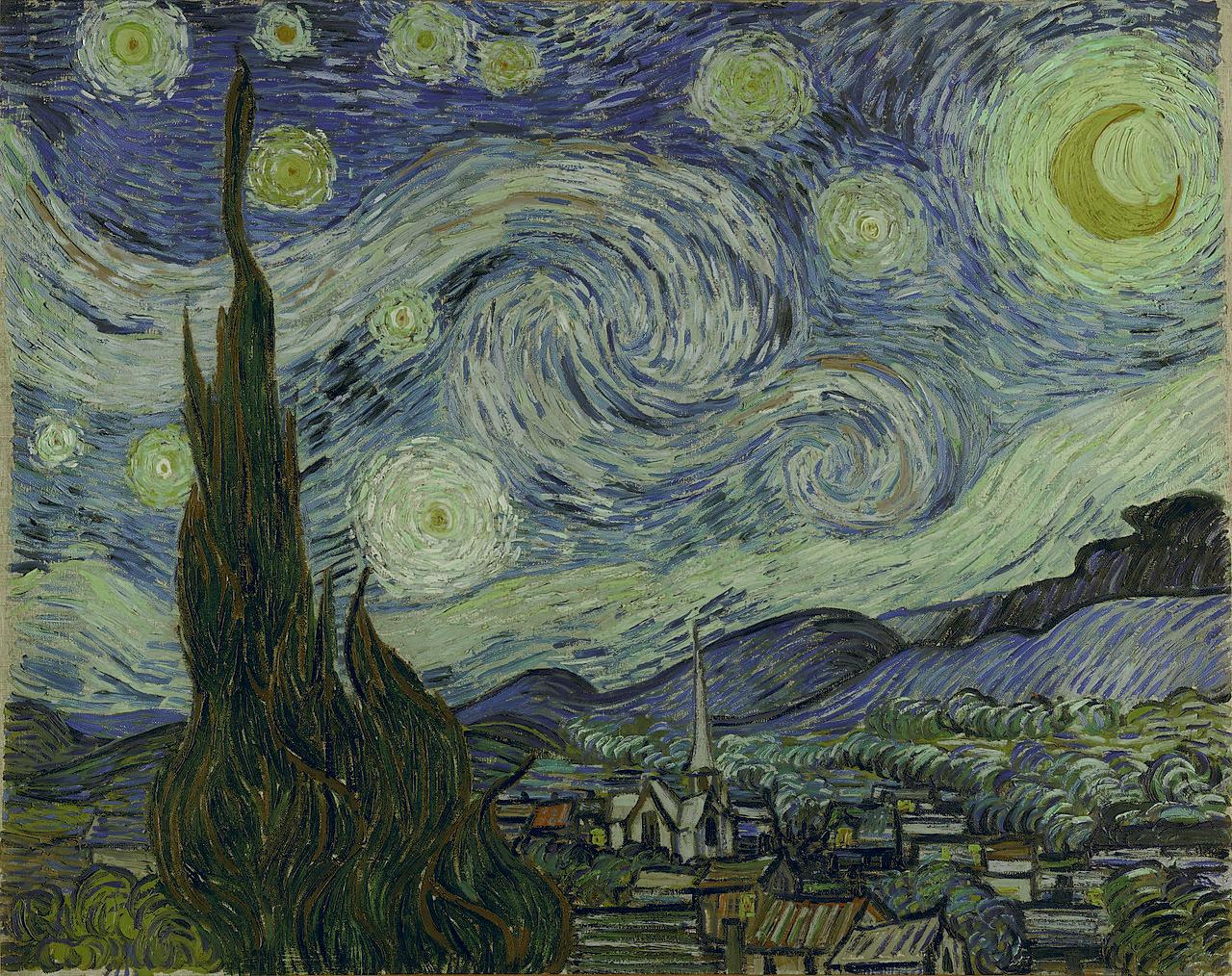}}
\subfloat{\includegraphics[scale=0.122]{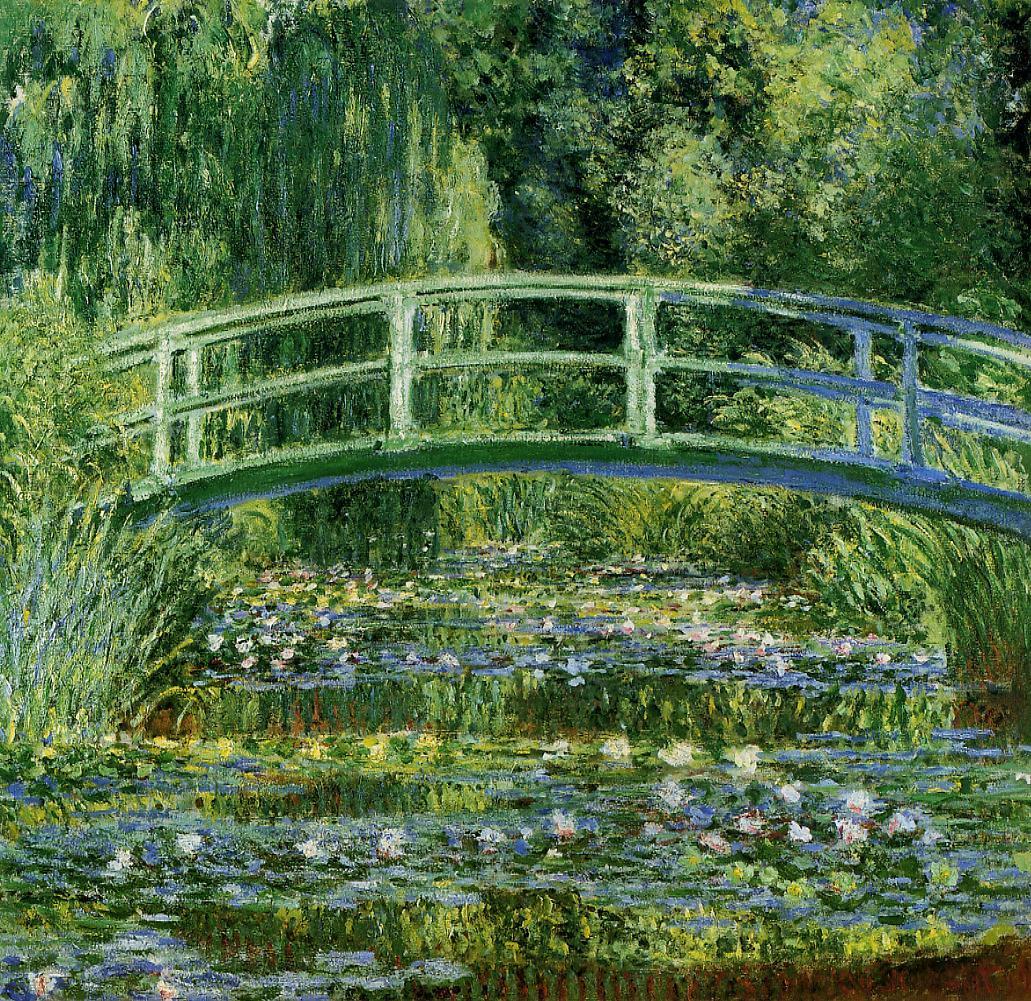}}
\subfloat{\includegraphics[scale=0.12]{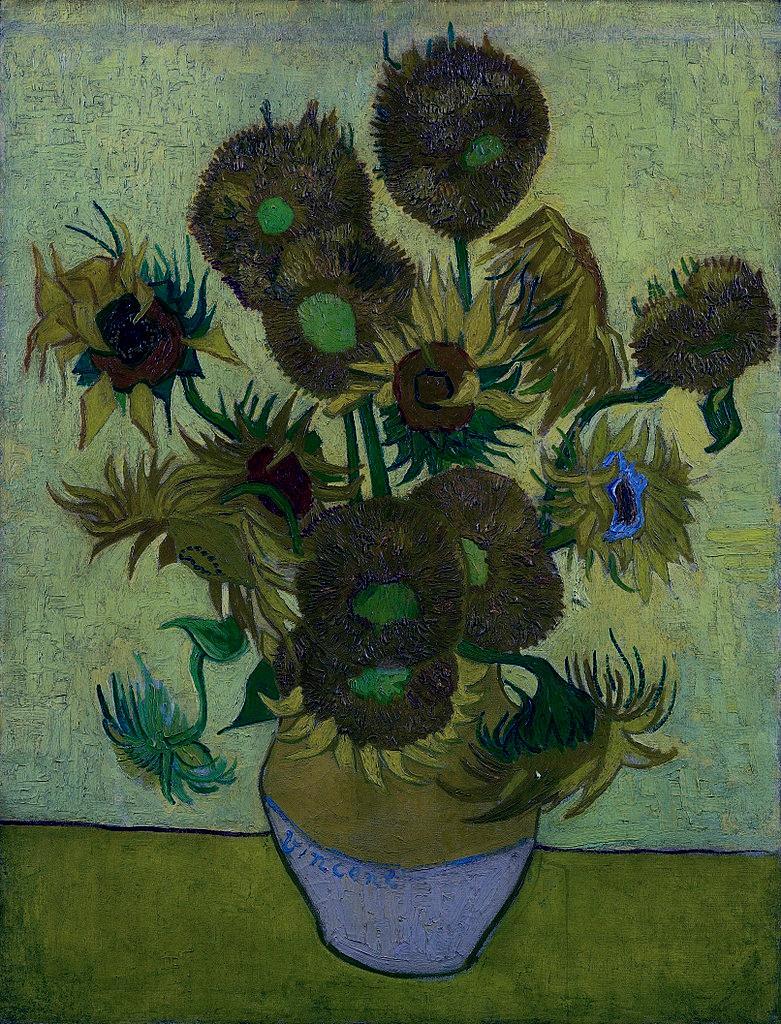}}
\vspace{-3ex}
\centering
\subfloat{\includegraphics[scale=0.12]{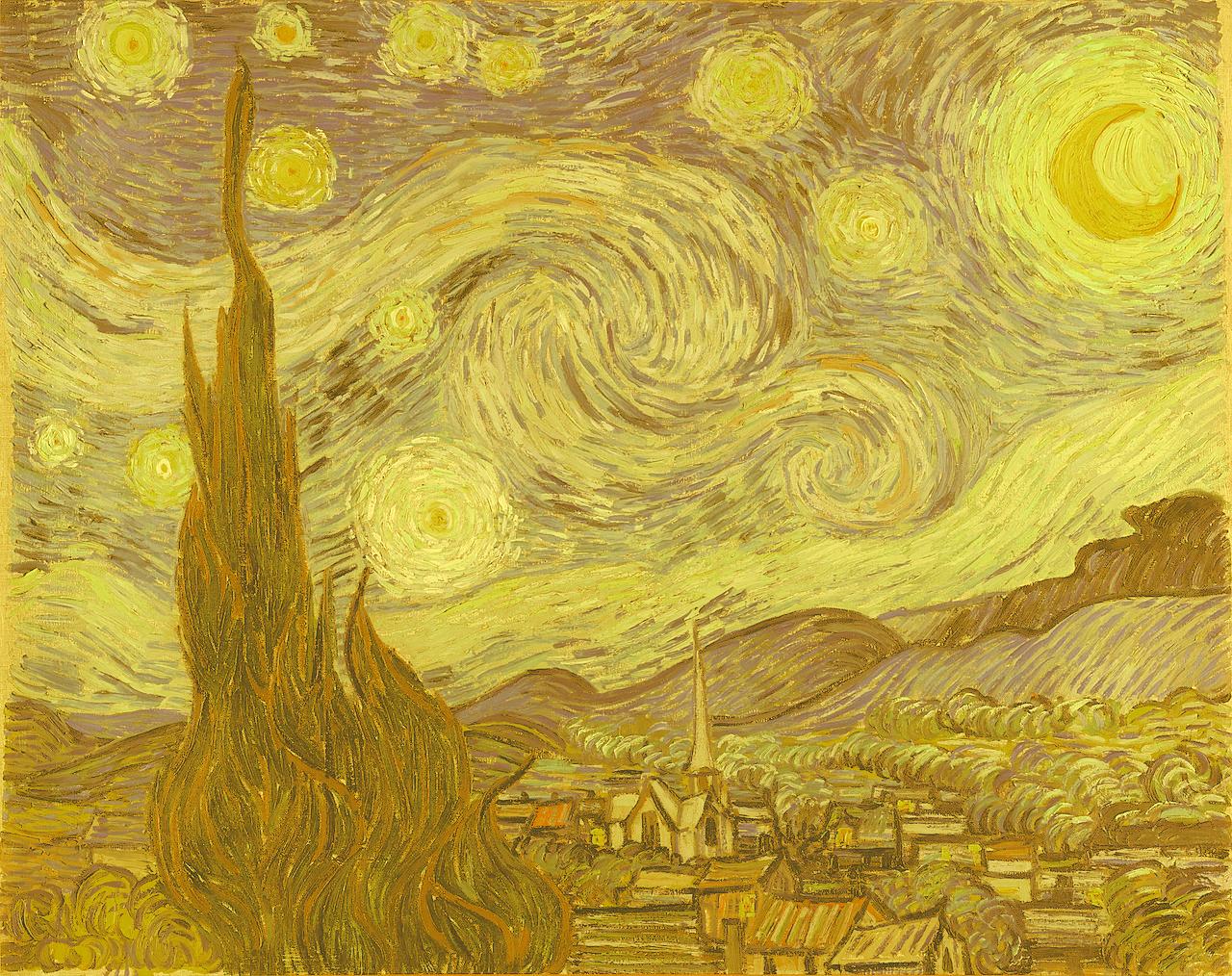}}
\subfloat{\includegraphics[scale=0.122]{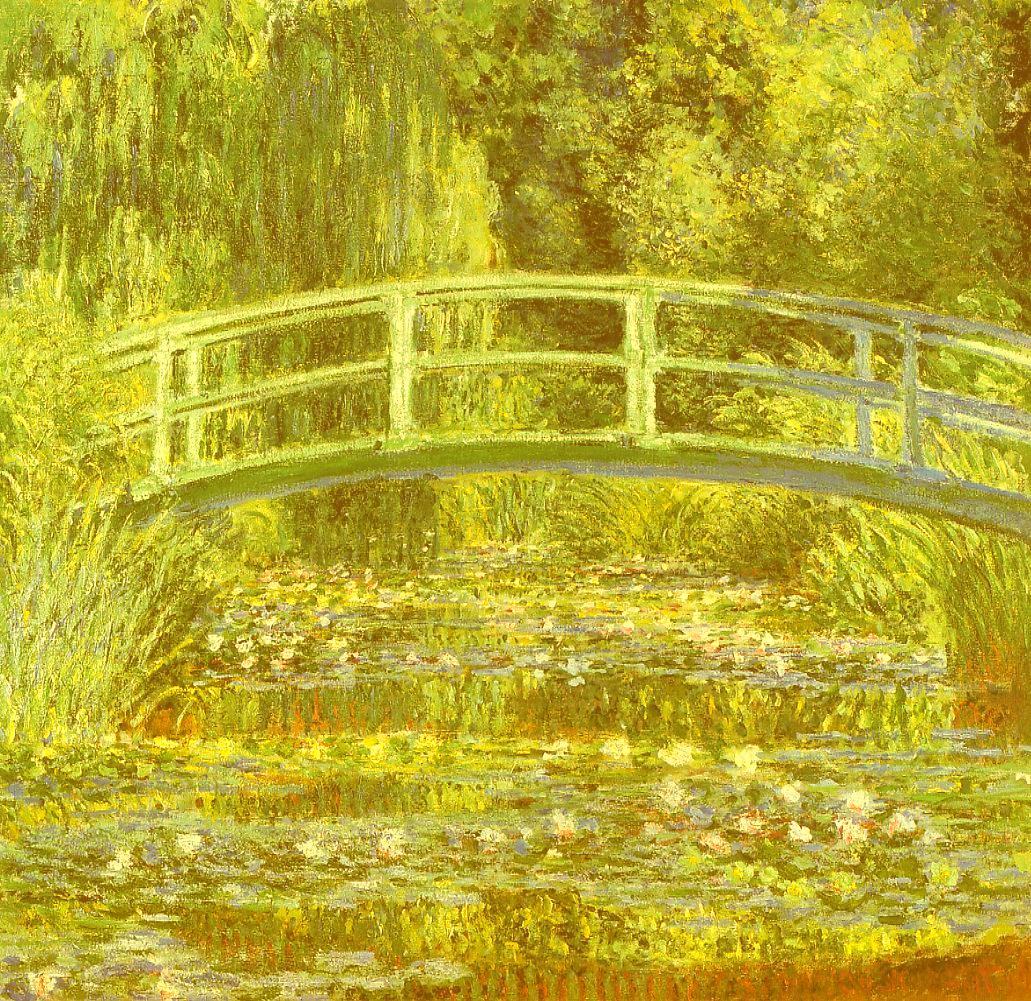}}
\subfloat{\includegraphics[scale=0.12]{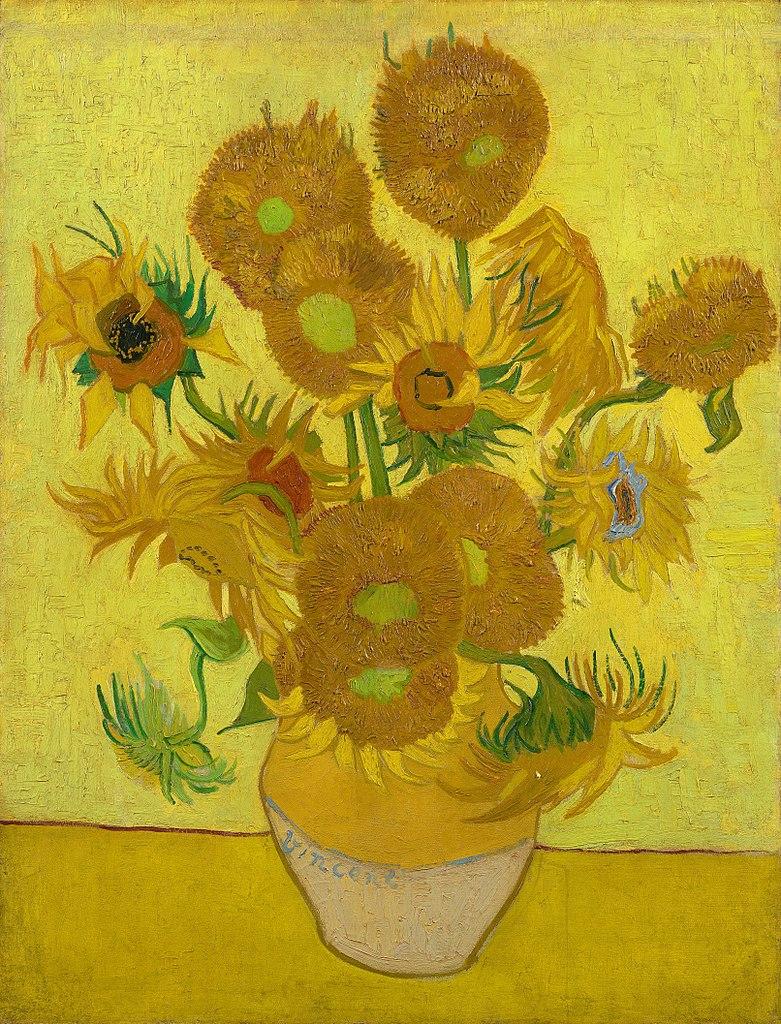}}
\caption{Color transferred images. The $k$-th column and $j$-th row is obtained by $S_j \circ T_k \# \rho_k$, so it is image $k$ with the color of image $j$.}
\label{fig: color transfer}
\end{figure}

\begin{remark}
A task very similar to color transfer is \textit{color normalization} \cite{li2015complete}, that seeks to eliminate the differences in several images' color distributions with minimum distortion. For instance, in medical imaging, we may want to remove irrelevant variations, such as different lighting conditions or staining techniques. BaryNet can easily solve this task by mapping the color distributions $\rho_k$ to their common barycenter, which by definition minimizes total distortion.
\end{remark}

\section{Conclusions}
\label{sec: conclusion}

Conditional density estimation and latent variable discovery are two intimately related problems in machine learning: one learns the dependence of data $x$ on a given variable $z$, while the other infers a latent variable $z$ from data $x$. This paper proposes to solve both problems in the framework of optimal transport barycenters. Our method is based on an intuitive principle of minimum uncertainty, that is, the goal of learning is to reduce some measure of uncertainty or variability. Specifically, for latent variable discovery, we begin with some data $\rho(x)$ and our learning ends with a joint distribution $\rho(x,z)$, which assigns the latent variables $z$ to each data point $x$ through $\rho(z|x)$. Our principle leads to maximizing the reduction in variability from $\rho(x)$ to $\rho(x,z)$.

How should we characterize the variability of a joint distribution $\rho(x,z)$? A simple approach is to take the mean variability of the conditional distributions $\rho(x|z)$. Yet, this approach does not always lead to sensible results: the $k$-means algorithm, for example, minimizes the sum of squared errors, or equivalently the weighted average of each cluster's variance, and it is known that it often fails to recognize clusters with different sizes and shapes \cite{yang2019clustering}.

Instead, we seek a distribution $\mu$ that can act as a representative of all $\rho(x|z)$, and measure the variability of $\rho(x, z)$ by that of $\mu$. This idea naturally leads to the optimal transport barycenter, which minimizes a user-specified distance between $\mu$ and each $\rho(x|z)$. A characterization of variability arises from the barycenter's optimal transport cost, indicating that variability and transport cost are two sides of the same coin. Under simplifying assumptions \cite{tabak2018explanation,yang2019clustering}, this definition of $\rho(x,z)$'s variability includes the aforementioned simple approach as special case.

It follows that latent variable discovery should seek assignments $\rho(z|x)$ that minimize the variability of the barycenter of the $\rho(x|z)$. At the same time, conditional density estimation also benefits from the barycenter representation. The difficult task of learning the possibly infinitely many $\rho(x|z)$ is reduced to learning just the barycenter $\mu$, from which we can recover each $\rho(x|z)$.

The contributions of this paper are
\begin{enumerate}
\item The introduction of the BaryNet algorithms, which use neural nets. The unsupervised BaryNet performs latent variable assignment $\rho(z|x)$, while the supervised BaryNet computes the barycenter $\mu$ and estimates each conditional distribution $\rho(x|z)$. Their effectiveness is confirmed by tests on artificial and real-world data, with Euclidean and non-Euclidean costs.

\item Enrichment of the theory of optimal transport barycenters. In particular, the existence of Kantorovich and Monge solutions for barycenters with infinitely many $\rho(x|z)$  are studied (Theorems \ref{thm: well-posedness and existence} and \ref{thm: barycenter transport map}), and geometric properties of the Wasserstein space are discovered that resemble those of Euclidean space (Theorems \ref{thm: variance decomposition} and \ref{thm: Gaussian covariance}).

\item An intimate connection between autoencoders and BaryNet is identified. In particular, with squared Euclidean distance cost and the simplifying assumption that $\rho(x|z)$ are equivalent up to translation, BaryNet includes the following algorithms as special cases: $k$-means, PCA, principle curves and surfaces, and undercomplete autoencoders.

\end{enumerate}

The theoretical framework developed in this article opens up several new directions of research:

\begin{enumerate}

\item Parallelism to autoencoders. We proposed the Barycentric autoencoder (BAE) algorithm in Section \ref{sec: autoencoder}, based on the parallelism between autoencoders and unsupervised BaryNet. It would be interesting to compare the performance of BAE with that of VAE, WAE or AAE. One can also apply the regularizations of denoising autoencoders and sparse autoencoders \cite{goodfellow2016deep} to BaryNet.

\item Cooperation for density estimation. Given labeled data $\rho(x,z)$ and any density estimation algorithm or generative modeling algorithm, such as WGAN \cite{arjovsky2017wasserstein}, one can check whether the algorithm's performance can be improved by first estimating the density of the barycenter $\mu$, and then transporting to each $\rho(x|z)$ through BaryNet, instead of learning the joint distribution $\rho(x, z)$ directly.

\item Transfer learning and domain adaptation. The semisupervised BaryNet introduced in Section \ref{sec: semi-supervised factor discovery} can be applied to solve transfer learning problems. For instance, given some unlabeled data $\{x_j'\}$ and a few labeled data $\{x_i,z_i\}$ (such that $x_i$ and $x_j'$ may be drawn from different distributions), we can perform classification on $\{x_j'\}$ based on the information of $\{x_i, z_i\}$. Then, the semisupervised BaryNet (\ref{semisupervised factor discovery objective}) produces a labeling in accordance with the principle of minimum uncertainty.

\item Metric learning: When a clustering plan or label assignment $\rho(x,z)$ is provided, we can try to infer what metric or cost function is responsible for that particular assignment of $z$. As an example, let $X$ be the space of images, and $\rho(x,z)$ be some image dataset whose labels are text descriptions. Since human vision is better tuned to discerning faces than inanimate objects such as buildings, there would be more labels related to faces than to buildings. Specifically, $X$'s subspace of facial images would have a greater density of labels $z$ than the subspace of buildings' images, or equivalently, if label $z_1$ concerns facial feature (e.g. ``smiling face") and $z_2$ concerns building's feature (e.g. ``old building"), then the Euclidean variance of $\rho(x|z_1)$ is most likely smaller than that of $\rho(x|z_2)$. Assume that $g$ is a Riemannian metric on $X$, such that $\rho(x,z)$ becomes an optimal solution to the factor discovery problem (\ref{factor discovery objective}) with squared geodesic distance cost $c=d^2(x,y)$ induced by $g$, then $(X,g)$ should not be Euclidean, and instead $g$ should assign greater distances to the subspace of faces, so that factor discovery adapts to $g$ by making $\rho(x|z_1)$ ``smaller" than $\rho(x|z_2)$.
Note that, (\ref{factor discovery objective}) evaluates any assignment $\rho(x,z)$ when given a cost $c(x,y)$; alternatively, it might be modified so as to evaluate any candidate cost or metric when given an assignment.

\end{enumerate}

\section*{Acknowledgments} The work of E. G. Tabak was partially supported by NSF grant DMS-1715753 and ONR grant N00014-15-1-2355.

\bibliographystyle{acm}
\bibliography{main}

\centerline{\bf \Large Appendices}

\appendix
\section{Assumptions}
\label{appendix: assumptions}

\begin{assumption}
\label{assumption: coercive and Heine Borel}
The cost function $c$ on $X\times Y$ is \textit{locally uniformly coercive}, that is, for some (and thus every) $y_0 \in Y$ and for every compact $K\subseteq X$,
\begin{equation*}
\lim_{d(y,y_0)\to\infty} \inf_{x\in K} c(x,y) = \infty
\end{equation*}
where the limit is taken over all sequences in $Y$. Also, the space $Y$ satisfies the \textit{Heine-Borel property} that every closed bounded subset is compact.
\end{assumption}

Most cost functions $c$ used in practice are locally uniformly coercive: for instance, given any metric space $(X,d)$, the $l^p$ distance cost $d(x_1,x_2)^p$ with $p\in(0,\infty)$ trivially satisfies the condition.
Also, a broad class of spaces $Y$ satisfy the Heine-Borel property, including Euclidean spaces, complete Riemannian manifolds, and all their closed subsets.

\begin{assumption}
\label{Monge assumption}
Assume that for all $\mu \in P(Y)$ and $v$-almost all $z$, the optimal coupling between $\rho(x|z)$ and $\mu(y)$ is unique and is a Monge solution, i.e. $\pi(x,y|z) = (Id,T_z) \# \rho(x|z)$ for some transport map $T_z$.
\end{assumption}
This assumption holds in most real-world applications. For instance, by Theorem 2.44 of \cite{villani2003topics}, if $X=Y= \R^d$ and the cost $c$ is strictly convex and superlinear, then it holds whenever almost all $\rho(x|z)$ are absolutely continuous. In practice, we are only given samples drawn from unknown probabilities, so we can typically assume that they come from absolutely continuous $\rho(x|z)$.

\section{Proof of Theorem \ref{thm: well-posedness and existence}}
\label{appendix: proof of wellposedness and existence}
Since we constructed the conditional distributions $\rho(x|z)$ using disintegration, the map $Z\to P(X)\times P(Y)$, $z\mapsto (\rho(x|z),\mu)$ is automatically measurable (in the topology of weak convergence) for any given $\mu$. Then, Corollary 5.22 of \cite{villani2008optimal} implies that there is a measurable assignment: $z\mapsto \pi(x,y|z)$ such that each $\pi(x,y|z)$ is an optimal transport plan between $\rho(x|z)$ and $\mu(y)$.

Given that $c$ is bounded below, let $c_n := \min(c,n)$ be a sequence of bounded continuous functions that increases pointwise to $c$. Then, by the monotone convergence theorem, the total transport cost from $\rho(x,z)$ to $\mu(y)$ becomes
\begin{align*}
\int_{Z} I_c(\rho(\cdot|z),\mu) d\nu(z) &= \iint c(x,z)d\pi(x,y|z)d\nu(z) = \lim_{n\to\infty} \iint c_n d\pi(x,y|z) d\nu(z)
\end{align*}
The last term is well-defined since $z\mapsto \int c_n d\pi(\cdot|z)$ is measurable, so the barycenter problem is well-defined.

Factor the measure $\pi(x,y,z)$ into $\pi(x,y|z)\nu(z)$. Then, integrating $\pi$ yields $1$, showing that $\pi$ is a probability measure, and integrating $\pi$ times test functions $\psi\in C_b(X\times Z)$ and $\phi\in C_b(Y\times Z)$ shows that $\pi$ has the marginals: $\pi_{XZ} = \rho(x,z)$ and $\pi_{YZ} = \mu(y)\otimes \nu(z)$. This proves the $\geq$ side of (\ref{joint measure formulation of total transport cost}).
Since the $\leq$ side of (\ref{joint measure formulation of total transport cost}) is evident, the first assertion of Theorem \ref{thm: well-posedness and existence} is proved.\\

To prove the second assertion, we need the following lemmas:

\begin{lemma}
\label{lemma: duality and semicontinuity}
Given any $\mu \in P(Y)$, the total transport cost (\ref{total transport cost}) satisfies the following duality formula
\begin{align}
\label{transport cost primal and dual}
\begin{split}
(\ref{total transport cost}) =&\min_{\substack{\pi \in P(X\times Y\times Z)\\ \pi_{XZ} = \rho\\ \pi_{YZ} = \mu \otimes v}}
\int_{X\times Y\times Z} c(x,y) d\pi(x,y,z)\\
=&
\sup_{\substack{\phi \in C_b(X\times Z)\\ \psi \in C_b(Y\times Z)\\ \phi+\psi \leq c}} \int_{X\times Z} \phi(x,z) d\rho(x,z) + \int_{Y\times Z} \psi(y,z) d\mu(y) d\nu(z),
\end{split}
\end{align}
so that (\ref{total transport cost}) is lower semi-continuous in $\mu$ in the topology of weak convergence of $P(Y)$.
\end{lemma}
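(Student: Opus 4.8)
The plan is to recognize the marginal-constrained minimization on the right of (\ref{transport cost primal and dual}) as an \emph{ordinary} Kantorovich problem in disguise, invoke classical Kantorovich duality once, and then read lower semicontinuity off the dual representation. The primal equality itself is free: the identity between the total transport cost $\int_Z I_c(\rho(\cdot|z),\mu)\,d\nu(z)$ and $\min_\pi\int c\,d\pi$ over $\pi\in P(X\times Y\times Z)$ with $\pi_{XZ}=\rho$ and $\pi_{YZ}=\mu\otimes\nu$ is precisely assertion~1 of Theorem~\ref{thm: well-posedness and existence}, which also supplies a minimizer. Weak duality is equally immediate: for any admissible $\pi$ and any $\phi\in C_b(X\times Z)$, $\psi\in C_b(Y\times Z)$ with $\phi+\psi\le c$, integrating the pointwise inequality against $\pi$ and using the two prescribed marginals gives $\int c\,d\pi\ge\int\phi\,d\rho+\int\psi\,d(\mu\otimes\nu)$. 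The real content is the reverse inequality --- the absence of a duality gap.

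To get it, I would view the constrained problem as optimal transport between $\rho\in P(X\times Z)$ and $\mu\otimes\nu\in P(Y\times Z)$ --- both Borel probabilities on Polish spaces --- for the ``glued'' cost $\bar c\big((x,z),(y,z')\big):=c(x,y)$ if $z=z'$ and $\bar c:=+\infty$ otherwise. Since $c$ is continuous and $\{z=z'\}$ is closed, $\bar c$ is lower semicontinuous, and it is bounded below by $\inf c>-\infty$. A coupling in $\Pi(\rho,\mu\otimes\nu)$ has finite $\bar c$-cost iff it is concentrated on $\{z=z'\}$, and such couplings correspond canonically to the $\pi\in P(X\times Y\times Z)$ with $\pi_{XZ}=\rho$, $\pi_{YZ}=\mu\otimes\nu$, preserving the value $\int c\,d\pi$; hence the two primal problems coincide. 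Kantorovich duality in the form valid for lower semicontinuous costs bounded below (Theorem~5.10 of \cite{villani2008optimal}) then gives
$$\min_{\pi}\int \bar c\,d\pi=\sup_{\substack{\Phi\in C_b(X\times Z),\ \Psi\in C_b(Y\times Z)\\ \Phi\oplus\Psi\,\le\,\bar c}}\int\Phi\,d\rho+\int\Psi\,d(\mu\otimes\nu),$$
and the constraint $\Phi\oplus\Psi\le\bar c$ is vacuous off $\{z=z'\}$ and reads $\Phi(x,z)+\Psi(y,z)\le c(x,y)$ on it --- precisely the constraint $\phi+\psi\le c$ appearing in (\ref{transport cost primal and dual}). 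This is the dual equality.

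Lower semicontinuity in $\mu$ follows quickly from the dual formula. For a fixed admissible pair $(\phi,\psi)$, the function $g_\psi(y):=\int_Z\psi(y,z)\,d\nu(z)$ is bounded by $\|\psi\|_\infty$ and, by dominated convergence, continuous; thus $g_\psi\in C_b(Y)$ and $\int\psi\,d(\mu\otimes\nu)=\int_Y g_\psi\,d\mu$, so $\mu\mapsto\int\phi\,d\rho+\int_Y g_\psi\,d\mu$ is finite and weakly continuous on $P(Y)$. The total transport cost is the supremum over all admissible $(\phi,\psi)$ of such continuous functions, hence lower semicontinuous. (Alternatively, bypassing the dual: each $\mu\mapsto I_c(\rho(\cdot|z),\mu)$ is weakly lower semicontinuous, the integrands are uniformly bounded below by $\inf c$, and Fatou's lemma gives the claim directly.)

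The step I expect to be the main obstacle is the no-gap assertion: one must verify that the degenerate, $+\infty$-valued glued cost $\bar c$ still falls within the hypotheses of a Kantorovich duality theorem that yields \emph{bounded continuous} potentials and \emph{no duality gap}. A more hands-on, fiberwise route --- apply duality to each $I_c(\rho(\cdot|z),\mu)$ and integrate in $z$ --- instead runs into a measurable-selection problem for near-optimal potentials followed by the need to approximate Carath\'eodory functions by jointly continuous ones, which is exactly why routing everything through one cited theorem is cleaner.
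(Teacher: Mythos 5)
Your proposal takes essentially the same route as the paper: you introduce the same glued cost $\bar c$ (the paper calls it $\tilde c(x,z_1,y,z_2)=c(x,y)+\infty\cdot\delta_{z_1\neq z_2}$), note it is lower semicontinuous and bounded below, invoke the same Kantorovich duality theorem (Theorem 5.10 of Villani 2008) for $\rho$ and $\mu\otimes\nu$, identify couplings supported on the diagonal $\{z=z'\}$ with couplings in $P(X\times Y\times Z)$ having the two prescribed marginals, and extract lower semicontinuity from the dual expression as a supremum of weakly continuous affine functionals of $\mu$. The obstacle you flag at the end is not actually an obstacle — Villani's Theorem 5.10 is stated precisely for $[0,\infty]$-valued lsc costs (after shifting by $\inf c$) and delivers $C_b$ potentials with no gap, which is exactly how the paper uses it.
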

\begin{proof}
Define a cost function from $X\times Z$ to $Y\times Z$,
$$\tilde{c}(x,z_1,y,z_2) = c(x,y) + \infty \cdot \delta_{z_1\neq z_2} = \begin{cases}
c(x,y) \text{ if } z_1 = z_2\\
\infty \ \text{ otherwise,}
\end{cases}$$
which is lower semi-continuous on $X\times Z\times Y\times Z$. Then, we can apply Kantorovich duality (Theorem 5.10 of \cite{villani2008optimal}) to $\rho(x,y)$ and $\mu\otimes v$ to obtain the duality formula
$$\min_{\substack{\tilde{\pi}\in P(X\times Z\times Y\times Z)\\
\pi_{XZ_1} = \rho\\
\pi_{YZ_2} = \mu \otimes v}}
\int \tilde{c} ~d\tilde{\pi}
= \sup_{\substack{\phi \in C_b(X\times Z)\\ \psi \in C_b(Y\times Z)\\ \phi+\psi \leq c}} \int_{X\times Z} \phi(x,z) d\rho(x,z) + \int_{Y\times Z} \psi(y,z) d\mu(y) d\nu(z).$$
This is part of the theorem that the minimum on the left side is achieved.

It remains to show that
\begin{equation}
\label{equivalence of primal problems}
\min_{\substack{\tilde{\pi}\in P(X\times Z\times Y\times Z)\\
\pi_{XZ_1} = \rho\\
\pi_{YZ_2} = \mu \otimes v}}
\int \tilde{c} ~d\tilde{\pi}
= \min_{\substack{\pi \in P(X\times Y\times Z)\\ \pi_{XZ} = \rho\\ \pi_{YZ} = \mu \otimes v}}
\int_{X\times Y\times Z} c ~d\pi.
\end{equation}
Define the map $P(x,y,z) = (x,z,y,z)$.
For the $\leq$ part of (\ref{equivalence of primal problems}): given any coupling $\pi(x,y,z)$ that solves the right side, the pushforward
$\tilde{\pi} := P\# \pi$ is applicable to the left side. For the $\geq$ part: if the left side is infinite, then we are done. Else, given an optimal coupling $\tilde{\pi}(x,z_1,y,z_2)$,
\begin{align*}
\int \tilde{c} ~d\tilde{\pi} &= \int c(x,y) + \infty \cdot \delta_{z_1\neq z_2} ~d\tilde{\pi}(x,y|z_1,z_2) d\tilde{\pi}_{Z_1Z_2}(z_1,z_2) < \infty.
\end{align*}
It follows that $\tilde{\pi}_{Z_1Z_2}\big(\{z_1\neq z_2\}\big) = 0$, so the measure $\tilde{\pi}$ must be concentrated on the diagonal $\{z_1=z_2\}$. Then, we can define the pullback $\pi = P^{-1}\#\tilde{\pi}$, which has the correct marginals $\pi_{XZ},\pi_{YZ}$ and is applicable to the right side of (\ref{equivalence of primal problems}):
$$\int c ~d\pi = \int c(x,y) ~d\tilde{\pi}(x,z,y,z) = \int \tilde{c} ~d\tilde{\pi}.$$
Hence, we have proved formula (\ref{transport cost primal and dual}).

Theorem 2.8 of \cite{billingsley1999convergence} implies that for any $\psi \in C_b(Y\times Z)$, the map
$$\mu \mapsto \mu \otimes v \mapsto \iint \psi ~dv d\mu$$
is a continuous linear functional in the topology of weak convergence of $\mu \in P(Y)$. So the right side of (\ref{transport cost primal and dual}), as a supremum over continuous functions, is lower semi-continuous in $\mu$.
\end{proof}

\begin{lemma}
\label{lemma: barycenter problem lower bound}
Given any $\mu(y) \in P(Y)$, the total transport cost (\ref{total transport cost}) has the lower bound:
\begin{equation*}
I_c(\rho(x,z),\mu) \geq I_c(\rho(x),\mu)
\end{equation*}
where we ``forget" the labeling $z$ on the right side.
\end{lemma}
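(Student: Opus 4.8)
The plan is to use the joint-measure formulation of the total transport cost from assertion 1 of Theorem \ref{thm: well-posedness and existence}. By that assertion there is a Kantorovich solution $\pi \in P(X\times Y\times Z)$ with $\pi_{XZ} = \rho$, $\pi_{YZ} = \mu\otimes\nu$, and
$$\int_Z I_c(\rho(\cdot|z),\mu)\,d\nu(z) = \int_{X\times Y\times Z} c(x,y)\,d\pi(x,y,z).$$
First I would pass to the $(X,Y)$-marginal $\pi_{XY}$ of this $\pi$. Since $\pi_{XZ} = \rho$ we have $\pi_X = \rho_X = \rho(x)$, and since $\pi_{YZ} = \mu\otimes\nu$ we have $\pi_Y = \mu$; hence $\pi_{XY} \in \Pi(\rho(x),\mu)$ is an admissible coupling for the ordinary optimal transport problem between the $x$-marginal $\rho(x)$ and $\mu$.

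The second step is the trivial observation that, because the integrand $c(x,y)$ does not depend on $z$, marginalizing over $z$ leaves its integral unchanged:
$$\int_{X\times Y\times Z} c(x,y)\,d\pi(x,y,z) = \int_{X\times Y} c(x,y)\,d\pi_{XY}(x,y) \geq \inf_{\sigma \in \Pi(\rho(x),\mu)} \int_{X\times Y} c\,d\sigma = I_c(\rho(x),\mu),$$
where every integral is well-defined in $(-\infty,+\infty]$ because $c$ is bounded below and all the measures involved are probability measures. Combining this with the displayed identity gives $I_c(\rho(x,z),\mu) = \int_Z I_c(\rho(\cdot|z),\mu)\,d\nu(z) \geq I_c(\rho(x),\mu)$, which is the assertion of the lemma.

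There is no genuine obstacle here: the only care needed is the routine bookkeeping that $\int c\,d\pi$ is meaningful and that pushing a Borel probability measure forward under a coordinate projection yields a Borel probability measure with the expected marginal, both of which are standard for Polish spaces. An equivalent route would be via the duality formula (\ref{transport cost primal and dual}): any pair $(\phi(x),\psi(y))$ with $\phi+\psi\le c$ admissible for the dual of $I_c(\rho(x),\mu)$ is, viewed as functions constant in $z$, also admissible in (\ref{transport cost primal and dual}), so the supremum defining the total transport cost dominates the one defining $I_c(\rho(x),\mu)$; this gives the same inequality, but the primal marginalization argument is shorter and I would present that one.
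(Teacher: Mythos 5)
Your proof is correct. The paper disposes of this lemma by citing Theorem 4.8 of Villani's \emph{Optimal Transport: Old and New}, which asserts (joint) convexity of the optimal transport cost in its marginals; applied to the family $\{\rho(\cdot|z)\}_z$ averaged against $\nu$, with the second argument held fixed at $\mu$, that convexity is exactly the Jensen-type inequality
$$I_c\Bigl(\int_Z \rho(\cdot|z)\,d\nu(z),\,\mu\Bigr) \;\le\; \int_Z I_c\bigl(\rho(\cdot|z),\mu\bigr)\,d\nu(z),$$
which is the lemma. Your argument instead marginalizes a Kantorovich solution $\pi\in P(X\times Y\times Z)$ of the joint formulation to a coupling $\pi_{XY}\in\Pi(\rho(x),\mu)$ and uses admissibility. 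This is not really a different theorem so much as an inlined proof of the relevant case of the one the paper cites: the standard proof of convexity of $I_c$ is precisely ``average/marginalize the optimal plans of the pieces to get an admissible plan for the mixture.'' The two routes therefore buy you essentially the same thing; yours is more self-contained (no appeal to Villani), while the paper's is one line. One small point of hygiene you already handled: if the total transport cost is $+\infty$ the inequality is trivial, and otherwise the marginalization of $\pi$ is the standard Polish-space pushforward under a coordinate projection, so all integrals are well-defined in $(-\infty,+\infty]$ since $c$ is bounded below. Your alternative dual argument sketched at the end is also valid and amounts to the same observation seen through Kantorovich duality.
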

\begin{proof}
This is a direct corollary of Theorem 4.8 of \cite{villani2008optimal}.
\end{proof}

Denote the total transport cost (\ref{transport cost primal and dual}) by $F(\mu)$. Let $\{ \mu^n \} \subseteq P(Y)$ be a minimizing sequence such that $F(\mu^n)$ converges to the optimal transport cost $\inf F$. If $\inf F = \infty$, then any $\mu \in P(Y)$ can serve as a barycenter. Else, assume that $\inf F < \infty$ and choose a constant $C \geq 0$ large enough so that $\sup F(\mu^n) < C$.

We show that Assumption \ref{assumption: coercive and Heine Borel} implies that $\{\mu^n\}$ is tight. Since $Y$ is assumed to satisfy the Heine-Borel property, it suffices to show that, for any $\epsilon>0$, there is some radius $R$ such that
\begin{equation}
\label{tightness from coercivity}
\sup_n \mu^n\big(B_R^C(y_0)\big) \leq \epsilon,
\end{equation}
where $y_0\in Y$ is some arbitrary point and $B_R^C(y_0)=\{y \in Y, d(y,y_0) \geq R\}$.

Let $K \subseteq X$ be a compact set whose complement has small measure $\rho(K^C) < \epsilon/2$. Since $c$ is assumed to be locally uniformly coercive and bounded below, we can choose a radius $R$ such that
$$\inf_{d(y,y_0) > R} \inf_{x \in K} c(x,y) > 2(C - \inf c)/\epsilon.$$

Assume for contradiction that there is some $\mu^n$ such that $\mu^n\big(B_R^C(y_0)\big) > \epsilon$. Then, for any coupling $\pi$ between $\rho(x)$ and $\mu^n$,
$$\pi\big(K\times B_R^C(y_0)\big) \geq \mu^n\big(B_R^C(y_0)\big) - \rho(K^C) > \epsilon/2$$
that is, any transport plan must move more than $\epsilon/2$ of mass inside $K\subseteq X$ to $B_R^C(y_0) \subseteq Y$, which gives a lower bound on the transport cost:
$$\int c\ d\pi \geq 2(C - \inf c)/\epsilon \cdot \pi\big(K\times B_R^C(y_0)\big) + \inf c \cdot \big[1-\pi\big(K\times B_R^C(y_0)\big)\big] > C$$
Therefore, the optimal transport cost is bounded by $I_c(\rho(x),\mu^n) \geq C$.

Meanwhile, Lemma \ref{lemma: barycenter problem lower bound} implies that $F(\mu^n) \geq I_c(\rho(x),\mu^n)$. Hence,
$$C > F(\mu^n) > C$$
a contradiction. It follows that condition (\ref{tightness from coercivity}) holds, so that $\{\mu^n\}$ is uniformly tight.

By Prokhorov's theorem, $\{\mu^n\}$ is precompact, so some subsequence $\{\mu^{n_k}\}$ converges weakly to some $\mu \in P(Y)$. By Lemma \ref{lemma: duality and semicontinuity}, $F$ is lower semi-continuous, so
$$\inf F = \liminf F(\mu^{n_k}) \geq F(\mu) \geq \inf F.$$
It follows that $\mu$ minimizes the total transport cost and is a barycenter of $\rho(x,z)$.

Finally, notice that the two minimization problems are equivalent:
$$\min_{\mu\in P(Y)}
\min_{\substack{\pi \in P(X\times Y\times Z)\\ \pi_{XZ} = \rho\\ \pi_{YZ} = \mu \otimes v}} \int c ~d\pi
=
\min_{\substack{\pi \in P(X\times Y\times Z)\\ \pi_{XZ} = \rho\\ \pi_{YZ} = \pi_Y \otimes \pi_Z}} \int c ~d\pi$$
Then formula (\ref{joint measure formulation of barycenter problem}) is proved.

\section{Proof of Theorem \ref{thm: barycenter transport map}}
\label{appendix: barycenter transport map}
We need the following results:

\begin{lemma}
\label{lemma: independence check}
Let $Y,Z$ be metric spaces and $\pi$ a Borel probability measure on $Y\times Z$. The two marginals $\pi_Y,\pi_Z$ are independent ($\pi = \pi_{Y} \otimes \pi_{Z}$) if and only if for all $f \in C_b(Y), g \in C_b(Z)$,
\begin{equation}
\label{integral independence}
\int_{Y \times Z} f(y) g(z) d\pi(y,z) = \int_Y f(y) d\pi_Y(y) \int_Z g(z) d\pi_{Z}(z).
\end{equation}
\end{lemma}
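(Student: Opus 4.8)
The forward implication is immediate: if $\pi=\pi_Y\otimes\pi_Z$, Fubini's theorem gives $\int f(y)g(z)\,d\pi=\bigl(\int f\,d\pi_Y\bigr)\bigl(\int g\,d\pi_Z\bigr)$ for every $f\in C_b(Y)$, $g\in C_b(Z)$. All the content is in the converse, where the plan is the standard route for proving two finite Borel measures equal: upgrade identity (\ref{integral independence}) from bounded continuous functions to indicators of closed rectangles, observe that those rectangles form a generating $\pi$-system, and conclude with Dynkin's $\pi$--$\lambda$ theorem.

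For the upgrade, given a closed $F\subseteq Y$ I would take $f_n(y):=\max\bigl(0,\,1-n\,d(y,F)\bigr)\in C_b(Y)$, which has values in $[0,1]$ and decreases pointwise to $\mathbbm{1}_F$; defining $g_n$ analogously for a closed $G\subseteq Z$, the products $f_n(y)g_n(z)$ decrease to $\mathbbm{1}_{F\times G}$. Evaluating (\ref{integral independence}) at each pair $(f_n,g_n)$ and letting $n\to\infty$, dominated convergence (integrands bounded by $1$, measures finite) yields $\pi(F\times G)=\pi_Y(F)\,\pi_Z(G)=(\pi_Y\otimes\pi_Z)(F\times G)$. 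Thus $\pi$ and $\pi_Y\otimes\pi_Z$ agree on the family $\mathcal{C}$ of closed rectangles $F\times G$. This $\mathcal{C}$ is a $\pi$-system, since $(F_1\times G_1)\cap(F_2\times G_2)=(F_1\cap F_2)\times(G_1\cap G_2)$, and it contains $Y\times Z$. Using that $Y$ and $Z$ are separable (so $Y\times Z$ is second countable and every open set is an $F_\sigma$), $\mathcal{C}$ generates $\mathcal{B}(Y\times Z)=\mathcal{B}(Y)\otimes\mathcal{B}(Z)$ — which is also what makes $\pi_Y\otimes\pi_Z$ a bona fide Borel measure on $Y\times Z$ in the first place. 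Since $\pi$ and $\pi_Y\otimes\pi_Z$ are probability measures coinciding on the generating $\pi$-system $\mathcal{C}$, the $\pi$--$\lambda$ theorem forces $\pi=\pi_Y\otimes\pi_Z$, i.e. the marginals are independent.

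The main obstacle is not any single estimate but keeping the $\sigma$-algebra bookkeeping honest: one must ensure that the product measure, the $\sigma$-algebra generated by $\mathcal{C}$, and the equality being proved all live on $\mathcal{B}(Y\times Z)$ rather than merely on the tensor product $\mathcal{B}(Y)\otimes\mathcal{B}(Z)$, which is precisely where separability of $Y$ and $Z$ (available since they are Polish wherever this lemma is invoked) enters. The mollification $f_n\downarrow\mathbbm{1}_F$, the limit passage, and the $\pi$--$\lambda$ argument are otherwise routine.
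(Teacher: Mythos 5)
Your proof is correct, and in the converse direction you organize the Dynkin argument differently from the paper. The mollification step is identical: both you and the paper take $f_n(y)=\max(0,1-n\,d(y,F))\downarrow\mathbbm{1}_F$ (and likewise $g_n\downarrow\mathbbm{1}_G$), pass to the limit by dominated convergence, and obtain $\pi(F\times G)=\pi_Y(F)\pi_Z(G)$ for closed rectangles. From there the paper runs Dynkin's $\pi$--$\lambda$ theorem twice in a nested fashion on the individual factors: first it fixes $W$ closed in $Z$ and shows the family $S_Y$ of Borel $U\subseteq Y$ satisfying the independence identity is a $\lambda$-system containing the $\pi$-system of closed sets, hence all of $\mathcal{B}(Y)$; it then repeats the argument to free $W$ up to all of $\mathcal{B}(Z)$. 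You instead make a single application of the uniqueness-of-measures corollary directly on the product $Y\times Z$, using the family $\mathcal{C}$ of closed rectangles as the $\pi$-system; this needs the extra observation that $\sigma(\mathcal{C})=\mathcal{B}(Y\times Z)$, for which you correctly invoke separability (every open set is an $F_\sigma$, so open rectangles, and hence the whole Borel $\sigma$-algebra, lie in $\sigma(\mathcal{C})$). Your flagging of the $\sigma$-algebra issue is in fact more careful than the paper: the paper's two-stage argument only establishes agreement on $\mathcal{B}(Y)\otimes\mathcal{B}(Z)$, and passing to $\mathcal{B}(Y\times Z)$ (and indeed making sense of $\pi_Y\otimes\pi_Z$ as a Borel measure on $Y\times Z$) already requires second countability, which the paper does not mention even though the lemma is stated for bare metric spaces. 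Both proofs are sound in the Polish setting where the lemma is invoked; your one-shot product-space argument is a bit cleaner, while the paper's nested version sidesteps the explicit discussion of $\sigma(\mathcal{C})$ on the product at the cost of a second round of Dynkin.
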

\begin{proof}
The ``only if" follows from straightforward integration. For the ``if" part, let $U\subseteq Y, W\subseteq Z$ be arbitrary closed subsets, and define the following $C_b$ functions
$$f_n(y) = \max\big( 1 - n \cdot d_Y(y,U),0 \big), ~g_n(z) = \max\big( 1 - n \cdot d_Z(z,U),0 \big)$$
that descend to the indicator functions of $U$ and $W$. Apply (\ref{integral independence}) to $f_n \cdot g_n$ and take the limit $n\to \infty$; the Dominated Convergence Theorem implies that
\begin{equation}
\label{product independence}
\pi(U \times W) = \pi_Y(U) \pi_Z(W).
\end{equation}
Let $P_Y, P_Z$ be the collections of all closed subsets of $Y,Z$. Let $S_Y$ be the collection of all Borel measurable subsets $U$ of $Y$ such that for any closed $W\in P_Z$, the independence formula (\ref{product independence}) holds. We seek to apply Dynkin's $\pi$-$\lambda$ theorem. $P_Y \subseteq S_Y$ is closed under intersection and thus is a $\pi$-system. Meanwhile, it is straightforward to show that $S_Y$ is closed under set difference and countable union of increasing sequence, so $S_Y$ is a $\lambda$-system. It follows from Dynkin's theorem that $S_Y$ contains the $\sigma$-algebra of $Y$.

Similarly, we define $S_Z$ to be the collection of all Borel measurable subsets $W$ of $Z$ such that for any measurable $U\in S_Y$ (not just $P_Y$), the independence formula (\ref{product independence}) holds. Repeating the above argument for $S_Z,S_Y$ shows that $S_Z$ contains the $\sigma$-algebra of $Z$. Hence, (\ref{product independence}) holds for all measurable rectangles in $Y\times Z$ and $\pi = \pi_Y \otimes \pi_Z$.
\end{proof}

\begin{corollary}
\label{cor: verify independence}
Given the same condition as in Lemma \ref{lemma: independence check}, the independence $\pi = \pi_{Y} \otimes \pi_{Z}$ holds if and only if for all $f \in C_b(Y), g \in C_b(Z)$,
\begin{equation}
\label{integral independence 2}
\int g(z) d\pi_Z(z) = 0 \to \int_{Y \times Z} f(y) g(z) d\pi(y,z) = 0.
\end{equation}
\end{corollary}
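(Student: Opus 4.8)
The plan is to deduce this directly from Lemma \ref{lemma: independence check}, which already characterizes independence through the bilinear identity (\ref{integral independence}). The only work is to bridge between that identity and the ``vanishing-integral'' formulation (\ref{integral independence 2}), and the bridge is a one-line recentering of the test function $g$.

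For the ``only if'' direction, I would assume $\pi = \pi_Y \otimes \pi_Z$ and invoke Lemma \ref{lemma: independence check}: for any $f \in C_b(Y)$ and $g \in C_b(Z)$ with $\int g\, d\pi_Z = 0$, identity (\ref{integral independence}) gives $\int_{Y\times Z} f(y)g(z)\, d\pi(y,z) = \int_Y f\, d\pi_Y \cdot \int_Z g\, d\pi_Z = 0$, which is exactly the conclusion in (\ref{integral independence 2}).

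For the ``if'' direction, I would verify the hypothesis of Lemma \ref{lemma: independence check}, namely establish (\ref{integral independence}) for arbitrary $f \in C_b(Y)$, $g \in C_b(Z)$. Fix such $f,g$ and set $c := \int_Z g\, d\pi_Z$, which is finite since $g$ is bounded. Then $\tilde{g} := g - c$ lies in $C_b(Z)$ and satisfies $\int_Z \tilde{g}\, d\pi_Z = 0$, so the assumed implication (\ref{integral independence 2}) applied to $f$ and $\tilde{g}$ yields $\int_{Y\times Z} f(y)\tilde{g}(z)\, d\pi(y,z) = 0$. Expanding $\tilde{g} = g - c$, using $\pi(Y\times Z) = 1$ to pull out the constant, and using $\int_{Y\times Z} f(y)\, d\pi(y,z) = \int_Y f\, d\pi_Y$ by the definition of the marginal, this rearranges to $\int_{Y\times Z} f(y)g(z)\, d\pi(y,z) = \int_Y f\, d\pi_Y \int_Z g\, d\pi_Z$, which is precisely (\ref{integral independence}). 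Lemma \ref{lemma: independence check} then gives $\pi = \pi_Y \otimes \pi_Z$.

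There is no substantive obstacle here; the proof is a formal manipulation. The only points needing (trivial) care are that the recentered function $\tilde{g} = g - c$ is still bounded and continuous, hence a legitimate test function for (\ref{integral independence 2}), and that the scalar $c$ may be extracted from the integral against the probability measure $\pi$.
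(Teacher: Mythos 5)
Your proof is correct and takes the same route as the paper: both directions reduce to recentering the test function $g$ by its mean $\int g\, d\pi_Z$, so that identity (\ref{integral independence}) is seen to be equivalent to the vanishing of $\int f(y)\bigl[g(z)-\int g\,d\pi_Z\bigr]d\pi$. The paper states this as a one-line rearrangement; you have simply expanded the same algebra into the two directions.
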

\begin{proof}
The condition (\ref{integral independence}) can be rearranged into
$$\int_{Y \times Z} f(y) \big[ g(z) - \int g\ d\pi_Z \big] d\pi(y,z) = 0.$$
\end{proof}

\begin{lemma}
\label{lemma: Dirac weakly closed}
Let $A$ be any closed subset of a metric space $Y$. The set of Dirac masses on $A$:
\begin{equation*}
\Delta_A = \{\delta_y, ~y\in A\}
\end{equation*}
is closed in the weak topology of $P(Y)$.
\end{lemma}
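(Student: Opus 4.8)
The plan is to reduce the statement to the special case $A = Y$ --- that the set $\Delta_Y$ of all Dirac masses is weakly closed in $P(Y)$ --- and then recover the general case essentially for free. For the reduction: since $A$ is closed, the function $g(y) := d(y,A)\wedge 1$ lies in $C_b(Y)$ and vanishes exactly on $A$, so every $\delta_y \in \Delta_A$ satisfies $\int g\,d\delta_y = 0$; thus $\Delta_A \subseteq \{\mu \in P(Y) : \int g\,d\mu = 0\} = \{\mu : \mu(A) = 1\}$, and this last set is weakly closed because $\mu \mapsto \int g\,d\mu$ is weakly continuous and $g \ge 0$. Intersecting with the (to-be-shown) closed set $\Delta_Y$, one gets $\overline{\Delta_A} \subseteq \Delta_Y \cap \{\mu : \mu(A) = 1\}$; but a Dirac $\delta_{y^*}$ with $\mu(A) = 1$ must have $y^* \in A$, so this intersection is exactly $\Delta_A$, giving $\overline{\Delta_A} = \Delta_A$.

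So the real work is to show $\Delta_Y$ is weakly closed. I would take a net $(\delta_{y_\alpha})$ in $\Delta_Y$ converging weakly to some $\mu \in P(Y)$ and show $\mu$ is a point mass. The key algebraic observation is that, for any $f,g \in C_b(Y)$, the scalars $f(y_\alpha)g(y_\alpha)$ converge simultaneously to $\int fg\,d\mu$ (testing $\delta_{y_\alpha} \to \mu$ against $fg \in C_b(Y)$) and to $\bigl(\int f\,d\mu\bigr)\bigl(\int g\,d\mu\bigr)$ (as the product of two convergent nets of reals), whence $\int fg\,d\mu = \bigl(\int f\,d\mu\bigr)\bigl(\int g\,d\mu\bigr)$. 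Taking $f = g$ this reads $\int\bigl(f - \int f\,d\mu\bigr)^2 d\mu = 0$, i.e. every bounded continuous function is $\mu$-almost surely equal to the constant $\int f\,d\mu$.

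It then remains to argue that a probability measure $\mu$ all of whose bounded continuous functions are a.s.\ constant must be a Dirac mass. Here I would use that $Y$ is separable (it is Polish throughout the paper), so that $\mu$ is concentrated on its nonempty support $S = \mathrm{supp}\,\mu$. If $S$ contained two distinct points $a \ne b$, then for $\varepsilon$ small enough the function $f(y) = d(y,a)\wedge 1 \in C_b(Y)$ is $< \varepsilon$ on the ball $B(a,\varepsilon)$ and $> \varepsilon$ on $B(b,\varepsilon)$; both balls have positive $\mu$-measure since $a,b \in S$, contradicting the a.s.\ constancy of $f$. Hence $S = \{y^*\}$ and $\mu = \delta_{y^*}$, so $\Delta_Y$ is weakly closed; feeding this back into the first paragraph finishes the proof, since any limit of Diracs supported in $A$ is some $\delta_{y^*}$ with $\mu(A) = 1$, forcing $y^* \in A$.

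The only non-formal ingredient is the last step --- deducing a point mass from $\mathrm{Var}_\mu(f) = 0$ for all $f \in C_b(Y)$ --- which is where the metric (and separability) structure of $Y$ genuinely enters; everything else is routine bookkeeping with weak convergence. If one wished to avoid invoking the support altogether, one could instead use inner regularity of finite Borel measures on a metric space to locate two disjoint positive-measure closed sets and separate them by a $C_b$ function via Urysohn's lemma, but the support argument is the shortest route given that $Y$ is Polish here.
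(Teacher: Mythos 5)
Your proof is correct, but it proceeds quite differently from the paper's. The paper argues sequentially: taking $\delta_{y_n}\to\mu$ weakly, it forms the decreasing closed sets $A_n=\overline{\{y_m:m\geq n\}}$, applies the Portmanteau theorem to get $\mu(A_n)=1$ for every $n$, hence $\mu(A_\infty)=1$ where $A_\infty=\bigcap_n A_n$ is the set of cluster points of $(y_n)$; then $A_\infty$ is a singleton, because two distinct cluster points $a\neq b$ would give two subsequences of $(\delta_{y_n})$ converging weakly to $\delta_a$ and to $\delta_b$ respectively, contradicting $\delta_a=\mu=\delta_b$. That argument is short and geometric but implicitly uses that $P(Y)$ is metrizable in the weak topology (so that sequential closedness equals closedness), which holds for Polish $Y$. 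You instead first carve out $\Delta_A=\Delta_Y\cap\{\mu:\mu(A)=1\}$ and then attack $\Delta_Y$ with the algebraic identity $\int fg\,d\mu=(\int f\,d\mu)(\int g\,d\mu)$ for all $f,g\in C_b(Y)$ — a multiplicativity property that forces every $f\in C_b(Y)$ to be $\mu$-a.s.\ constant, and from there you invoke concentration on the support to identify $\mu$ as a Dirac mass. Your route works with nets, so it does not lean on metrizability of $P(Y)$; what it trades in exchange is the separability of $Y$, used to guarantee $\mu(\mathrm{supp}\,\mu)=1$. Since the paper's ambient $Y$ is Polish, both hypotheses are available, so both proofs are valid; yours is somewhat more modular (the multiplicativity lemma is a reusable characterization of Dirac limits) while the paper's is more elementary and self-contained.
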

\begin{proof}
Suppose a sequence $\{\delta_{y_n}\}$ in $\Delta_A$ converges weakly to some $\mu\in P(Y)$. For any $n \geq 1$, let $A_n$ be the closure of the subsequence $\{y_m\}_{m \geq n}$. Then, by weak convergence, $\mu(A_n) = 1$ for all $n$ and thus $\mu(A_{\infty})=1$ where $A_{\infty} = \cap_{n} A_n$ is the set of limits of $\{y_n\}$. It follows that the set $A_{\infty}$ is non-empty. Meanwhile, $A_{\infty}$ cannot contain more than one point, otherwise it would contradict the weak convergence of $\{\delta_{y_n}\}$. Hence, $\mu$ is the Dirac mass $\delta_y$ where $y \in A$ is the only point in $A_{\infty}$.
\end{proof}

\begin{corollary}
\label{cor: Dirac identification}
Given a metric space $Y$, there exists a measureable function $F_{\Delta}: P(Y) \to Y$ such that $F_{\Delta}(\delta_y) = y$ for every Dirac mass $\delta_y$.
\end{corollary}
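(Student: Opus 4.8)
The plan is to identify $Y$ with a closed subset of $P(Y)$ via the map $y \mapsto \delta_y$, exhibit a continuous inverse on that subset, and then extend it measurably to all of $P(Y)$ by an arbitrary constant value.

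First I would check that the canonical map $\iota : Y \to P(Y)$, $\iota(y) = \delta_y$, is a continuous injection. Continuity is immediate from the definition of the weak topology, since $\mu \mapsto \int f\,d\mu$ is continuous for every $f \in C_b(Y)$ and $y \mapsto \int f\,d\delta_y = f(y)$ is continuous; injectivity holds because singletons are Borel in a metric space, so $\delta_{y_1} = \delta_{y_2}$ forces $y_1 = y_2$. The crucial upgrade is that $\iota$ is moreover a \emph{closed} map onto its image $\Delta_Y = \iota(Y)$: for any closed $A \subseteq Y$, Lemma \ref{lemma: Dirac weakly closed} states that $\iota(A) = \Delta_A$ is closed in $P(Y)$, hence closed in the subspace $\Delta_Y$. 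A continuous closed bijection is a homeomorphism, so $\iota : Y \to \Delta_Y$ is a homeomorphism and its inverse $\iota^{-1} : \Delta_Y \to Y$ is continuous, in particular Borel measurable.

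Next I would assemble the extension. Applying Lemma \ref{lemma: Dirac weakly closed} with $A = Y$ shows $\Delta_Y$ is closed, hence a Borel subset of $P(Y)$. If $Y = \emptyset$ there is nothing to prove; otherwise fix any $y_0 \in Y$ and set
\[
F_\Delta(\mu) = \begin{cases} \iota^{-1}(\mu), & \mu \in \Delta_Y, \\ y_0, & \mu \in P(Y) \setminus \Delta_Y. \end{cases}
\]
Then $F_\Delta(\delta_y) = y$ by construction. For measurability, given a Borel set $B \subseteq Y$, the preimage $F_\Delta^{-1}(B)$ is the union of $(\iota^{-1})^{-1}(B)$ --- which is Borel in $\Delta_Y$ because $\iota^{-1}$ is continuous, and therefore Borel in $P(Y)$ since $\Delta_Y$ is itself Borel in $P(Y)$ --- together with a set that is either empty or all of $P(Y)\setminus\Delta_Y$ according to whether $y_0 \in B$; both pieces are Borel, so $F_\Delta$ is Borel measurable.

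I do not expect a real obstacle here: the entire content is Lemma \ref{lemma: Dirac weakly closed}, which turns the tautological continuous injection $\iota$ into a topological embedding with closed image, after which the argument is elementary bookkeeping with Borel subsets of a subspace. The one point meriting a little care is to run the homeomorphism argument purely through open and closed sets rather than through sequences, since the weak topology on $P(Y)$ need not be metrizable when $Y$ is not separable; this is consistent with how the statement of Lemma \ref{lemma: Dirac weakly closed} is being invoked above.
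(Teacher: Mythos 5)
Your proof is correct and takes essentially the same approach as the paper: both define the identical map $F_\Delta$ (constant $y_0$ off $\Delta_Y$, the inverse of $y \mapsto \delta_y$ on it) and reduce measurability to Lemma \ref{lemma: Dirac weakly closed}. The only difference is packaging --- you first promote $\iota : y \mapsto \delta_y$ to a homeomorphism onto a closed image and then extend, whereas the paper directly computes $F_\Delta^{-1}(A) = \Delta_A$ or $\Delta_A \cup (P(Y)\setminus\Delta_Y)$ for closed $A$ and cites the same lemma.
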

\begin{proof}
We can simply define (for some arbitrary fixed $y_0 \in Y$),
\begin{equation*}
F_{\Delta}(\mu) = \begin{cases}
y \text{ if } \mu \in \Delta_Y \text{ and } \mu = \delta_y\\
y_0 \text{ else} 
\end{cases}
\end{equation*}
Then, for any closed subset $A \subseteq Y$,
\begin{equation*}
F_{\Delta}^{-1}(A) = \begin{cases}
\Delta_A \text{ if } y_0 \notin A\\
\Delta_A \cup \big(P(Y)\backslash \Delta_Y \big) \text{ else}
\end{cases}
\end{equation*}
It follows from Lemma \ref{lemma: Dirac weakly closed} that $F_{\Delta}^{-1}(A)$ is a measureable subset. Hence, $F_{\Delta}$ is measureable.
\end{proof}

\medskip
As argued in the beginning of Section \ref{sec: conditional transport map}, we can define the transport maps $T(x,z)$ by formula (\ref{def: monge transport map}), which is a Monge formulation of the constraints on the Kantorovich solution from (\ref{joint measure formulation of barycenter problem}): given any candidate transport map $T: X\times Z \to Y$, the corresponding transport plan is
\begin{equation}
\label{Kantorovich solution concentrated ona graph}
\pi := (Id,T)\#\rho(x,z) \in P(X\times Z\times Y),
\end{equation}
The marginal constraint $\pi_{XY} = \rho(x,z)$ is satisfied automatically, while the independence constraint $\pi_{YZ}=\pi_Y\otimes\pi_Z$ can be checked via Corollary \ref{cor: verify independence}.

Specifically, define the indicator function:
\begin{equation*}
I(T) = \begin{cases}
0 \text{ if there exists $\mu \in P(Y)$ such that } (T,Proj_Z)\#\rho(x,z) = \mu \otimes v\\
\infty \text{ otherwise.}
\end{cases}
\end{equation*}
Then, Corollary \ref{cor: verify independence} implies that
\begin{align*}
I(T) &= \sup_{\psi_Y \in C_b(Y)}
\sup_{\substack{\psi_Z \in C_b(Z)\\ \int \psi_Z dv = 0}}
\int_{Y\times Z} \psi_Y(y) \psi_Z(z) ~d\tilde{T}\#\rho\\
&= \sup_{\psi_Y \in C_b(Y)}
\sup_{\substack{\psi_Z \in C_b(Z)\\ \int \psi_Z dv = 0}}
\int \psi_Y (T(x,z)) \psi_Z(z) ~d\rho(x,z).
\end{align*}
It follows that we have a Monge formulation of the barycenter problem (\ref{joint measure formulation of barycenter problem}):
\begin{align}
\label{Monge formulation of barycenter problem}
\begin{split}
&\inf_{\substack{\text{Borel measurable }\\ T:X\times Z \to Y}}
\int c(x,T(x,z)) ~d\rho(x,z) + I(T)\\
=& \inf_{\substack{\text{Borel measurable}\\ T:X\times Z \to Y}}~
\sup_{\substack{\psi_Y \in C_b(Y)\\ \psi_Z \in C_b(Z)\\ \int \psi_Z dv = 0}}
\int c(x,T(x,z)) - \psi_Y (T(x,z)) \psi_Z(z) ~d\rho(x,z).
\end{split}
\end{align}

Essentially, (\ref{Monge formulation of barycenter problem}) is a minimization over couplings of the form (\ref{Kantorovich solution concentrated ona graph}), which are Kantorovich solutions concentrated on graphs. Therefore, (\ref{Monge formulation of barycenter problem}) is bounded below by  (\ref{joint measure formulation of barycenter problem}), which minimizes over general Kantorovich solutions. To finish the proof, it suffices to show that (\ref{joint measure formulation of barycenter problem}) can be achieved by some transport map $T$.

Let $\pi$ be a Kantorovich solution of (\ref{joint measure formulation of barycenter problem}), which exists by Theorem \ref{thm: well-posedness and existence}, and let $\mu=\pi_Y$ be the corresponding barycenter.
By Assumption \ref{Monge assumption}, for $v$-almost all $z \in Z$, the unique optimal coupling between $\rho(x|z)$ and $\mu$ is concentrated on the graph of some transport map $T_z$. Define the map $T(x,z) := T_z(x)$. It follows that for $v$-almost all $z$,
\begin{equation*}
\pi(x,y|z) = (Id,T_z)\#\rho(x|z)
\end{equation*}
or equivalently, for $\rho$-almost all $(x,z)$,
\begin{equation*}
\pi(y|x,z) = \delta_{T(x,z)}
\end{equation*}
Let $F_{\Delta}$ be the map given by Corollary \ref{cor: Dirac identification}.
Given that $\pi(y|x,z)$ can be chosen as a measureable function from $X\times Z$ to $P(Y)$, we can conclude that
\begin{equation*}
T(x,z) = F_{\Delta}\big( \pi(y|x,z) \big)
\end{equation*}
is a measureable function from $X\times Z$ to $Y$.

Hence,
\begin{equation*}
(\ref{joint measure formulation of barycenter problem}) = \int c ~d\pi = \int c ~d\pi(y|x,z) d\rho(x,z)
=
\int c\big(x,T(x,z)\big) d\rho(x,z) \geq (\ref{Monge formulation of barycenter problem})
\end{equation*}

\section{Proof of Theorem \ref{thm: variance decomposition}}
\label{appendix: variance decomposition}

If the marginal $\rho(x)$ does not have finite second moment, then both sides of (\ref{variance decomposition continuous}) are infinite: either $Var(\mu)$ or $W_2^2(\rho(x),\mu)$ must be infinite and Lemma \ref{lemma: barycenter problem lower bound} bounds $\int W_2^2(\rho(x|z),\mu)d\nu(z)$ below by $W_2^2(\rho(x),\mu)$. Hence, in the following proof we can assume that
\begin{equation}
\label{finite second moment}
\infty > \E_{\rho(x)}\big[\|x\|^2\big] = \int_{\R^d\times Z} \|x\|^2 d\rho(x,z) = \int_Z \E_{\rho(x|z)}\big[ \|x\|^2 \big] d\nu(z).
\end{equation}

We first prove the special case where $\nu(z)$ is finitely-supported and that the conditionals $\rho(x|z)$ are absolutely continuous.

Denote the subset of $P(\R^d)$ that consists of probabilities measures with finite second moments by $P_2(\R^d)$. Denote the support of $\nu$ by $\{z_k\}_{k=1}^K \subseteq Z$. Denote the positive numbers $\nu(\{z_k\})$ by $P_k$ and the conditionals $\rho(x|z_k)$ by $\rho_k(x)$. Then, the marginal $\rho(x)$ is the weighted sum $\sum_{k=1}^K P_k\rho_k$. Condition (\ref{finite second moment}) implies that each $\rho_k \in P_2(\R^d)$.

\begin{lemma}
\label{lemma: variance decomposition discrete}
Given absolutely continuous measures $\rho_k \in P_2(\R^d)$ and weights $P_k > 0$ for $1\leq k \leq K$, there exists a unique barycenter $\mu$ and it satisfies the discrete version of (\ref{variance decomposition continuous}):
\begin{equation}
\label{variance decomposition discrete}
Var(\rho(x)) = Var(\mu) + \sum_{k=1}^K P_k W_2^2(\rho_k,\mu)
\end{equation}
\end{lemma}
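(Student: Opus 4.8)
The plan is to reduce the statement to the finitely-supported Wasserstein barycenter theory of \cite{agueh2011barycenters} and then carry out a short second-moment expansion. Since $Z=\{z_1,\dots,z_K\}$ is finite with $\nu(\{z_k\})=P_k$ (so $\sum_k P_k=1$) and $c=\|x-y\|^2$, the total transport cost between $\rho(x,z)$ and any $\mu$ is exactly $\sum_{k=1}^K P_k W_2^2(\rho_k,\mu)$, so the barycenter of $\rho(x,z)$ as defined above coincides with the classical $W_2$-barycenter of the weighted family $(\rho_k,P_k)$. Because each $\rho_k$ is absolutely continuous and lies in $P_2(\R^d)$, \cite{agueh2011barycenters} guarantees that this barycenter $\mu$ exists, is unique, and is itself absolutely continuous with finite second moment. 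Brenier's theorem then supplies, for each $k$, the unique optimal transport map $T_k$ with $T_k\#\mu=\rho_k$ and $W_2^2(\rho_k,\mu)=\int_{\R^d}\|T_k(y)-y\|^2\,d\mu(y)$, and Proposition 3.8 together with Remark 3.9 of \cite{agueh2011barycenters} yields the barycentric identity $\sum_{k=1}^K P_k T_k(y)=y$ for $\mu$-a.e.\ $y$.

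Next I would record that the means agree: integrating the barycentric identity against $\mu$ and using $T_k\#\mu=\rho_k$ gives $\sum_k P_k\,\E_{\rho_k}[x]=\E_\mu[y]$, i.e.\ $\E_{\rho(x)}[x]=\E_\mu[y]=:m$. Both sides of (\ref{variance decomposition discrete}) are invariant under the simultaneous translation $x\mapsto x-m$, $y\mapsto y-m$, which preserves each $W_2^2(\rho_k,\mu)$ and each variance and sends the maps $T_k$ to translated maps still satisfying the barycentric identity; hence we may assume $m=0$. Then $Var(\mu)=\int\|y\|^2\,d\mu$, while $Var(\rho)=\sum_k P_k\int\|x\|^2\,d\rho_k=\sum_k P_k\int\|T_k(y)\|^2\,d\mu$ by the change of variables $x=T_k(y)$.

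The computation is then immediate: expanding the square,
\begin{align*}
Var(\mu)+\sum_{k=1}^K P_k W_2^2(\rho_k,\mu)
&= \int\|y\|^2\,d\mu+\sum_{k=1}^K P_k\int\big(\|T_k(y)\|^2-2\lb T_k(y),y\rb+\|y\|^2\big)\,d\mu\\
&= \int\|y\|^2\,d\mu+\sum_{k=1}^K P_k\int\|T_k(y)\|^2\,d\mu-2\int\left\langle\sum_{k=1}^K P_k T_k(y),\,y\right\rangle d\mu+\int\|y\|^2\,d\mu\\
&= \sum_{k=1}^K P_k\int\|T_k(y)\|^2\,d\mu = Var(\rho),
\end{align*}
where we used $\sum_k P_k=1$ and $\sum_k P_k T_k=\mathrm{Id}$ $\mu$-a.e.\ to cancel the three copies of $\int\|y\|^2\,d\mu$. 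All terms are finite: $\int\|T_k\|^2\,d\mu=\int\|x\|^2\,d\rho_k<\infty$ since $\rho_k\in P_2(\R^d)$, $\int\|y\|^2\,d\mu<\infty$ since $\mu$ has finite second moment, and the cross terms are bounded by Cauchy--Schwarz.

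I expect the only real obstacle to be bookkeeping rather than mathematics: one must check that the hypotheses here (all $\rho_k$ absolutely continuous, all in $P_2(\R^d)$) are precisely those under which \cite{agueh2011barycenters} delivers (i) existence and uniqueness of $\mu$, (ii) absolute continuity of $\mu$, so that Brenier's theorem applies to produce maps \emph{out of} $\mu$ (the direction used above), and (iii) the identity $\sum_k P_k T_k=\mathrm{Id}$. The passage from this lemma to the full Theorem \ref{thm: variance decomposition}---removing absolute continuity of the $\rho_k$ and finiteness of $\mathrm{supp}\,\nu$---will be handled separately by the approximation argument sketched after the theorem, which transfers the Euclidean geometry of $\R^d$ to $(P_2(\R^d),W_2)$.
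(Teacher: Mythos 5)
Your proof is correct and takes essentially the same route as the paper's: invoke absolute continuity to get existence, uniqueness, and absolute continuity of $\mu$ (the paper cites \cite{kim2017wasserstein} rather than \cite{agueh2011barycenters} for this, but both work), use Brenier's theorem and the barycentric identity $\sum_k P_k T_k = \mathrm{Id}$ $\mu$-a.e.\ from \cite{agueh2011barycenters}, then expand a square so the cross term cancels. The paper expands $Var(\rho)$ directly around $T_k(x)$ and $\overline{x}$ without translating, whereas you normalize the mean to zero and expand $Var(\mu)+\sum_k P_k W_2^2(\rho_k,\mu)$, but these are cosmetic differences; your derivation of the mean agreement by integrating the barycentric identity is in fact cleaner than the paper's argument based on the decomposition $\E\sum_k P_k\|Y-X_k\|^2 = \E\|Y-\overline X\|^2 + \sum_k P_k\E\|\overline X - X_k\|^2$.
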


\begin{proof}
By Theorems 3.1 and 5.1 of \cite{kim2017wasserstein}, since $\rho_k$ are absolutely continuous, there exists a unique barycenter $\mu(x)$ and it is also absolutely continuous. Then, Brenier's theorem (Theorem 2.12 \cite{villani2003topics}) implies that there is a unique optimal transport map $T_k$ from each $\rho_k$ to $\mu$. The transport maps have the form $T_k = \nabla \psi_k$ for some convex functions $\psi_k$, and they are invertible almost everywhere: let $\psi_k^*$ be the Legendre transform of $\psi_k$, then
$$\nabla \psi_k^* \circ \nabla \psi_k (x) = x, ~\nabla \psi_k \circ \nabla \psi_k^* (y) = y$$
for $\rho_k$-almost all $x$ and $\mu$-almost all $y$. Furthermore, $\nabla \psi_k^*$ serves as the optimal transport map from $\mu$ back to $\rho_k$.

Denote the mean of $\rho(x)$ by $\overline{x}$. Note that $\overline{x}$ is also the mean of the barycenter $\mu$: let $\pi$ be the (unique) Kantorovich solution given by Theorem \ref{thm: well-posedness and existence}, let $X_k$ be the random variables of $\rho_k=\pi_{XZ}(x|z_k)$, and let $Y$ be the random variable of $\mu=\pi_Y$. Define the mean $\overline{X}=\sum_{k=1}^K P_k X_k$.
Then the barycenter problem's objective (\ref{joint measure formulation of barycenter problem}) becomes
\begin{equation}
\label{barycenter mean argument}
\E \sum_{k=1}^K P_k ||Y-X_k||^2 = \E ||Y-\overline{X}||^2 + \sum_{k=1}^K P_k \E ||\overline{X}-X_k||^2.
\end{equation}
Since $Y$ minimizes the objective, we must have $Y=\overline{X}$, so that $\E[Y]=\overline{x}$.
Then
\begin{align*}
Var(\rho) &= \int_{\R^d} \|x-\overline{x}\|^2 d\rho(x) = \sum_{k=1}^K P_k \int \|(x-T_k(x))+(T_k(x)-\overline{x})\|^2 d\rho_k(x)\\
&= \sum_{k=1}^K P_k \int \|x-T_k(x)\|^2 + \|T_k(x)-\overline{x}\|^2 + 2 \lb x-T_k(x),T_k(x)-\overline{x} \rb d\rho_k(x).
\end{align*}
The first term is exactly the total transport cost, while the second term is the barycenter's variance:
$$\sum_{k=1}^K P_k \int \|T_k(x)-\overline{x}\|^2 d\rho_k(x) = \sum_{k=1}^K P_k \int \|y-\overline{x}\|^2 d\mu(y) = Var(\mu).$$
Regarding the third term, we use the fact that $\nabla \psi_k^*\#\mu = \rho_k$ to obtain
\begin{align*}\sum P_k \int \lb x-T_k(x),T_k(x)-\overline{x} \rb d\rho_k(x) &= \sum P_k \int \lb \nabla \psi_k^*(y)-y,y-\overline{x} \rb d\mu(y)\\
&= \int \big\lb \sum_{k=1}^K P_k \nabla \psi_k^*(y)-y,y-\overline{x} \big\rb d\mu(y).
\end{align*}
Remark 3.9 from \cite{agueh2011barycenters} shows that $\sum P_k \nabla \psi_k^*$ is exactly the identity, so the third term vanishes. It follows that formula (\ref{variance decomposition discrete}) holds.
\end{proof}

Now we tackle the general case when $\rho(x,z)$ is an arbitrary probability measure over $\R^d\times Z$.
Condition (\ref{finite second moment}) implies that $\rho(x|z) \in P_2(\R^d)$ for $v$-almost every $z$, so without loss of generality, $\rho(x|z)$ can be seen as a random variable from $Z$ to $P_2(\R^d)$. We denote its distribution by $\Omega(\eta)$, which belongs to $P(P_2(\R^d))$, the space of probability measures over $P_2(\R^d)$.

Abusing notation, we denote by $P_2(P_2(\R^d))$ the space of probabilities $\Omega'(\eta)$ on $(P_2(\R^d),W_2)$ with finite second moment, that is, for some (and thus any) $\rho_0 \in P_2(\R^d)$,
$$\int_{P_2(\R^d)} W^2_2(\rho_0,\eta)d\Omega'(\eta) < \infty.$$
Then $P_2(P_2(\R^d))$ can be equipped with the $2$-Wasserstein metric. Condition (\ref{finite second moment}) implies that our distribution $\Omega$ belongs to $P_2(P_2(\R^d))$: for any $\rho_0 \in P_2(\R^d)$,
\begin{align*}
\int_{P_2(\R^d)} W^2_2(\rho_0,\eta) ~d\Omega(\eta) &=
\int \inf_{\substack{\pi\in P(\R^d\times\R^d)\\
\pi_1=\rho_0,\pi_2=\eta}} \int \|x-x'\|^2 d\pi(x,z') ~d\Omega(\eta)\\
&\leq \iint \|x-x'\|^2 ~d\rho_0\otimes\eta(x,x') ~d\Omega(\eta) \text{ by the trivial coupling}\\
&\leq \iint 2(\|x\|^2+\|x'\|^2) d\rho_0(x)d\eta(x') d\Omega(\eta)\\
&\leq 2\E_{\rho_0(x)}[X^2] + 2\int_{P_2(\R^d)} \E_{\rho(x)}[X^2] d\Omega(\rho)\\
&\leq 2\E_{\rho_0(x)}[X^2] + 2\int_Z \E_{\rho(x|z)}[X^2] d\nu(z)\\
&< \infty \text{ by (\ref{finite second moment})}.
\end{align*}

By Theorem 6.18 of \cite{villani2008optimal}, both $(P_2(\R^d),W_2)$ and $(P_2(P_2(\R^d)),W_2)$ are Polish spaces, each of whose elements can be approximated by finitely-supported probability measures. Let $\{\Omega^n\}_{n=1}^{\infty} \subseteq P_2(P_2(\R^d))$ be a sequence of finitely-supported measures that converge to $\Omega$ in $W_2$. Then, each $\Omega^n$ can be expressed as
$$\Omega^n = \sum_{k=1}^{K^n} P_k^n \delta_{\rho_k^n},$$
where $K^n$ is the size of the support of $\Omega^n$, the positive numbers $P_k^n$ are the weights, and $\delta_{\rho_k^n}$ is the Dirac measure at $\rho_k^n \in P_2(\R^d)$. Define the marginal $\rho^n$ of $\Omega^n$ by
\begin{equation}
\label{x margin}
\rho^n = \sum_{k=1}^{K^n} P_k^n \rho_k^n.
\end{equation}
It follows that $\rho^n \in P_2(\R^d)$.

In order to apply Lemma \ref{lemma: variance decomposition discrete}, we show that these $\rho_k^n$ can be assumed to be absolutely continuous. A nice property of $(P_2(\R^d),W_2)$ is that absolutely continuous measures are dense in it: any measure in $(P_2(\R^d),W_2)$ can be approximated by finitely-supported measures, which can then be approximated by absolutely continuous measures using kernel smoothing. Thus, for each $\Omega^n$, we can construct
$$\tilde{\Omega}^n = \sum_{k=1}^{K^n} P_k^n \delta_{\tilde{\rho}_k^n}, \text{ such that }W_2^2(\rho_k^n,\tilde{\rho}_k^n)<\frac{1}{nK^n}, \text{ so that } W_2^2(\Omega^n,\tilde{\Omega}^n)<\frac{1}{n},$$
so that $\tilde{\Omega}^n$ also converge to $\Omega$ in $(P_2(P_2(\R^d)),W_2)$. It follows that $\rho^n$ are also absolutely continuous.

Now given that each $\Omega^n$ consists of absolutely continuous $\rho_k^n$, Lemma \ref{lemma: variance decomposition discrete} implies that each $\Omega^n$ has a unique barycenter $\mu^n$ and it satisfies
\begin{equation}
\label{finitely-supported decomposition}
Var(\rho^n) = Var(\mu^n) + \int_{P_2(\R^d)} W^2_2(\mu^n,\eta) d\Omega^n(\eta).
\end{equation}

The following two lemmas show that $\rho^n$ and $\mu^n$ enjoy good convergence properties.

\begin{lemma}
\label{lemma: x margin convergence}
The marginal $\rho^n$ converges to $\rho(x)$ in $(P_2(\R^d),W_2)$.
\end{lemma}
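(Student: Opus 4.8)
The plan is to identify both $\rho^n$ and $\rho(x)$ as \emph{intensity (mean) measures} of the random measures governed by $\Omega^n$ and $\Omega$, and then to show that passing to the intensity measure is a $1$-Lipschitz operation between the relevant Wasserstein spaces. Concretely, for $\Omega' \in P_2(P_2(\R^d))$ I would set
$$\mathcal{E}(\Omega')(A) := \int_{P_2(\R^d)} \eta(A)\, d\Omega'(\eta), \qquad A \subseteq \R^d \text{ Borel},$$
which is a Borel probability measure on $\R^d$ (countable additivity via monotone convergence) and in fact lies in $P_2(\R^d)$, because $\int_{\R^d}\|x\|^2\,d\eta(x) = W_2^2(\eta,\delta_0)$ and the hypothesis $\Omega' \in P_2(P_2(\R^d))$ bounds $\int W_2^2(\eta,\delta_0)\,d\Omega'(\eta)$. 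By the definition of the marginal in (\ref{x margin}) one has $\rho^n = \mathcal{E}(\Omega^n)$, and since $\Omega$ is by construction the law of $z \mapsto \rho(x|z)$ under $\nu$, a change of variables gives $\rho(x) = \int_Z \rho(x|z)\,d\nu(z) = \mathcal{E}(\Omega)$. Thus the lemma reduces to the inequality $W_2(\mathcal{E}(\Omega_1),\mathcal{E}(\Omega_2)) \leq W_2(\Omega_1,\Omega_2)$ for all $\Omega_1,\Omega_2 \in P_2(P_2(\R^d))$, applied with $\Omega_1 = \Omega^n$ and $\Omega_2 = \Omega$, together with the already-established convergence $\Omega^n \to \Omega$ in $(P_2(P_2(\R^d)),W_2)$.

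To prove this inequality I would use a gluing/averaging construction. Pick an optimal coupling $\Gamma$ realizing $W_2(\Omega_1,\Omega_2)$ on $P_2(\R^d)\times P_2(\R^d)$, and for each pair $(\eta_1,\eta_2)$ choose an optimal coupling $\gamma_{\eta_1,\eta_2}$ of $\eta_1$ and $\eta_2$ on $\R^d\times\R^d$, so that $\int\|x-y\|^2\,d\gamma_{\eta_1,\eta_2} = W_2^2(\eta_1,\eta_2)$; then form $\pi := \int \gamma_{\eta_1,\eta_2}\,d\Gamma(\eta_1,\eta_2) \in P(\R^d\times\R^d)$. Evaluating on Borel rectangles shows that $\pi$ has first marginal $\int \eta_1\,d\Gamma = \mathcal{E}(\Omega_1)$ and second marginal $\mathcal{E}(\Omega_2)$, so $\pi$ is admissible and, by Fubini,
$$W_2^2\big(\mathcal{E}(\Omega_1),\mathcal{E}(\Omega_2)\big) \leq \int \|x-y\|^2\,d\pi = \int W_2^2(\eta_1,\eta_2)\,d\Gamma(\eta_1,\eta_2) = W_2^2(\Omega_1,\Omega_2).$$
Hence $W_2(\rho^n,\rho(x)) = W_2(\mathcal{E}(\Omega^n),\mathcal{E}(\Omega)) \leq W_2(\Omega^n,\Omega) \to 0$, which is the claim.

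The step I expect to require the most care — the main obstacle — is ensuring that the selection $(\eta_1,\eta_2)\mapsto \gamma_{\eta_1,\eta_2}$ is Borel measurable, since otherwise $\pi$ is not a well-defined measure and the Fubini interchange is illegitimate. This is handled exactly as in Appendix \ref{appendix: proof of wellposedness and existence}: $(P_2(\R^d),W_2)$ is Polish (Theorem 6.18 of \cite{villani2008optimal}), the identity map $(\eta_1,\eta_2)\mapsto(\eta_1,\eta_2)$ is trivially Borel, and Corollary 5.22 of \cite{villani2008optimal} provides a measurable choice of optimal couplings depending on the marginals. Everything else — the identity $\int\|x\|^2\,d\eta = W_2^2(\eta,\delta_0)$, the marginal computation for $\pi$, and the Fubini/Tonelli interchanges — is routine bookkeeping.
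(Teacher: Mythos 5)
Your proof is correct, but it follows a genuinely different route from the paper's. The paper argues directly via the test-function characterization of $W_2$-convergence (condition (iv) of Theorem 7.12 in Villani): it checks $\int \psi\, d\rho^n \to \int \psi\, d\rho$ for every continuous $\psi$ with at most quadratic growth, by lifting $\psi$ to the continuous, quadratically-growing functional $F_\psi(\eta) = \int \psi\, d\eta$ on $(P_2(\R^d), W_2)$ and applying the same characterization one level up to the convergence $\Omega^n \to \Omega$. You instead prove that the intensity map $\mathcal{E}: P_2(P_2(\R^d)) \to P_2(\R^d)$ is $1$-Lipschitz for the respective $W_2$ metrics, via a gluing of an optimal coupling $\Gamma$ of $\Omega_1, \Omega_2$ with a measurably-selected family of optimal couplings $\gamma_{\eta_1,\eta_2}$, and then identify $\rho^n = \mathcal{E}(\Omega^n)$ and $\rho = \mathcal{E}(\Omega)$. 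Your argument is more structural and yields the strictly stronger quantitative bound $W_2(\rho^n,\rho) \leq W_2(\Omega^n,\Omega)$, at the price of needing the measurable-selection result (Corollary 5.22 of Villani), which you correctly flag and handle; the paper's argument avoids any selection issue and uses only the topological characterization of $W_2$-convergence. Both are complete proofs, and your Lipschitz estimate would in fact simplify a few of the later $\epsilon$-chasing steps in Lemma~\ref{lemma: barycenter convergence} as well.
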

\begin{proof}
We apply condition (iv) of Theorem 7.12 from \cite{villani2003topics}, which shows that for any Polish space $X$ with metric $d$, a sequence $\eta^n$ converges to $\eta$ in $(P_2(X),W_2)$ if and only if
\begin{equation}
\label{test of W2 convergence}
\lim_{n\to\infty} \int_X \psi d\eta^n = \int_X \psi d\eta
\end{equation}
for any continuous function $\psi$ that grows at most quadratically: $|\psi(x)| \leq C(1+d(x_0,x)^2)$ for some $x_0\in X$ and $C>0$.
Therefore, it suffices to show that
$$\lim_{n\to\infty} \int_{\R^d} \psi d\rho^n = \int \psi d\rho$$
for any $\psi$ with a quadratic bound: $|\psi(x)| \leq C(1+\|x\|^2)$ for some $C>0$.

By (\ref{x margin}), it is equivalent to
\begin{equation}
\label{level-up limit}
\lim_{n\to\infty}\int_{P_2(\R^d)} F_{\psi}(\eta) d\Omega^n(\eta) = \int F_{\psi}(\eta) d\Omega(\eta),
\end{equation}
where $F_{\psi}(\eta) := \int \psi d\eta$. The function $F_{\psi}$ is continuous on $(P_2(\R^d),W_2)$ by condition (\ref{test of W2 convergence}). The quadratic bound on $\psi$ translates to a quadratic bound on $F_{\psi}$:
$$F_{\psi}(\eta) \leq C \big(W_2^2(\eta,\delta_0) +1 \big),$$
where $\delta_0$ is the Dirac measure at $0$.

Then we can apply Theorem 7.12 \cite{villani2003topics} on $(P_2(P_2(\R^d)),W_2)$, and (\ref{level-up limit}) follows from the $W_2$ convergence of $\Omega^n$ to $\Omega$.
\end{proof}

\begin{lemma}
\label{lemma: barycenter convergence}
A subsequence of $\{\mu^n\}$ converges in $(P_2(\R^d),W_2)$ to a barycenter $\mu$ of $\Omega$.
\end{lemma}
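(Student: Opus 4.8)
The plan is to split the statement into two parts: first extract a subsequence of $\{\mu^n\}$ that converges in $(P_2(\R^d),W_2)$, and then show that any such limit is a barycenter of $\Omega$ by a stability argument carried out one level up, in the Wasserstein space built over $(P_2(\R^d),W_2)$. Throughout I write $\mathcal{W}_2$ for the $2$-Wasserstein distance on the Polish space $(P_2(\R^d),W_2)$, and I use the elementary identity
$$\int_{P_2(\R^d)} W_2^2(\nu,\eta)\,d\Omega'(\eta) \;=\; \mathcal{W}_2\big(\delta_\nu,\Omega'\big)^2 ,$$
valid because $\delta_\nu$ admits a unique coupling with $\Omega'$. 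Under this identity, $\mu^n$ being a barycenter of $\Omega^n$ means exactly that $\mu^n$ minimizes $\nu\mapsto\mathcal{W}_2(\delta_\nu,\Omega^n)$ over $P_2(\R^d)$, and the decomposition (\ref{finitely-supported decomposition}) reads $Var(\rho^n)=Var(\mu^n)+\mathcal{W}_2(\delta_{\mu^n},\Omega^n)^2$.

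\emph{Precompactness.} Since the transport term in (\ref{finitely-supported decomposition}) is nonnegative and the barycenter $\mu^n$ has the same mean $m^n$ as $\rho^n$ (the identity (\ref{barycenter mean argument})), we get $\E_{\mu^n}[\|x\|^2]=Var(\mu^n)+\|m^n\|^2\le Var(\rho^n)+\|m^n\|^2=\E_{\rho^n}[\|x\|^2]$, which is bounded uniformly in $n$ because $\rho^n\to\rho(x)$ in $W_2$ (Lemma \ref{lemma: x margin convergence}); hence $\{\mu^n\}$ is tight. To upgrade this to relative compactness in $(P_2(\R^d),W_2)$ I would show that the second moments of $\{\mu^n\}$ are uniformly integrable, using the barycentric representation of $\mu^n$ as the pushforward of the optimal multi-marginal coupling $\gamma^n$ of $(\rho^n_1,\dots,\rho^n_{K^n})$ under the weighted-average map $(x_1,\dots,x_{K^n})\mapsto\sum_k P^n_k x_k$: by convexity of $\|\cdot\|^2$, a tail of $\mu^n$ is controlled by the weighted superposition of the tails of the atoms $\rho^n_k$, which is precisely the tail of $\rho^n=\sum_k P^n_k\rho^n_k$ and so is uniformly small by Lemma \ref{lemma: x margin convergence}. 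Then Prokhorov's theorem together with the characterization of $W_2$-precompactness yields a subsequence $\mu^{n_j}\to\mu$ in $(P_2(\R^d),W_2)$ with $\mu\in P_2(\R^d)$.

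\emph{The limit is a barycenter.} Fix any $\nu\in P_2(\R^d)$. The map $\eta\mapsto W_2^2(\nu,\eta)$ is continuous on $(P_2(\R^d),W_2)$ with at most quadratic growth, so by condition (iv) of Theorem 7.12 of \cite{villani2003topics} (used already in the proof of Lemma \ref{lemma: x margin convergence}) and the convergence $\Omega^n\to\Omega$ in $(P_2(P_2(\R^d)),W_2)$,
$$\mathcal{W}_2(\delta_\nu,\Omega^n)^2=\int W_2^2(\nu,\eta)\,d\Omega^n(\eta)\;\to\;\int W_2^2(\nu,\eta)\,d\Omega(\eta)=\mathcal{W}_2(\delta_\nu,\Omega)^2 .$$
By minimality of $\mu^{n_j}$ we have $\mathcal{W}_2(\delta_{\mu^{n_j}},\Omega^{n_j})\le\mathcal{W}_2(\delta_\nu,\Omega^{n_j})$; letting $j\to\infty$ and then taking the infimum over $\nu$ gives $\limsup_j\mathcal{W}_2(\delta_{\mu^{n_j}},\Omega^{n_j})\le\inf_{\nu}\mathcal{W}_2(\delta_\nu,\Omega)$. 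On the other hand, the triangle inequality for $\mathcal{W}_2$ and the fact that $\mathcal{W}_2(\delta_a,\delta_b)=W_2(a,b)$ give
\begin{align*}
\mathcal{W}_2(\delta_\mu,\Omega)
&\le \mathcal{W}_2(\delta_\mu,\delta_{\mu^{n_j}})+\mathcal{W}_2(\delta_{\mu^{n_j}},\Omega^{n_j})+\mathcal{W}_2(\Omega^{n_j},\Omega)\\
&= W_2(\mu,\mu^{n_j})+\mathcal{W}_2(\delta_{\mu^{n_j}},\Omega^{n_j})+\mathcal{W}_2(\Omega^{n_j},\Omega),
\end{align*}
and since $W_2(\mu,\mu^{n_j})\to0$ and $\mathcal{W}_2(\Omega^{n_j},\Omega)\to0$, taking $\liminf_j$ yields $\mathcal{W}_2(\delta_\mu,\Omega)\le\liminf_j\mathcal{W}_2(\delta_{\mu^{n_j}},\Omega^{n_j})$. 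Combining the two bounds, $\mathcal{W}_2(\delta_\mu,\Omega)\le\inf_{\nu}\mathcal{W}_2(\delta_\nu,\Omega)$, so $\mu$ attains the infimum and is a barycenter of $\Omega$; in particular $Var(\mu^{n_j})\to Var(\mu)$ and $\mathcal{W}_2(\delta_{\mu^{n_j}},\Omega^{n_j})\to\mathcal{W}_2(\delta_\mu,\Omega)$, which is what the subsequent limit in (\ref{variance decomposition continuous}) requires.

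\emph{Main obstacle.} The second part is essentially bookkeeping with the triangle inequality; the delicate point is the uniform integrability of the second moments in the precompactness step. A crude union bound over the (possibly very numerous) atoms $\rho^n_k$ produces tail terms that are not weighted by $P^n_k$ and need not be small, so the proof must genuinely exploit the barycentric structure (or the near-comonotone form of the optimal multi-marginal coupling $\gamma^n$) to recover those weights. Alternatively, this step can be bypassed by invoking the existing consistency theory for Wasserstein barycenters, which already yields convergence of barycenters under Wasserstein convergence of the generating measure $\Omega$.
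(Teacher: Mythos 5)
Your second half (showing the $W_2$--subsequential limit is a barycenter) matches the paper's argument essentially verbatim: both use minimality of $\mu^{n}$, the identity $\mathcal W_2(\delta_a,\delta_b)=W_2(a,b)$, the triangle inequality in $P_2(P_2(\R^d))$, and convergence of the competitor cost $\mathcal W_2(\delta_\nu,\Omega^n)^2\to\mathcal W_2(\delta_\nu,\Omega)^2$ (which the paper just calls ``by triangle inequality'' but amounts to the same thing). Also, your crispest reformulation---that $\mu^n$ minimizes $\nu\mapsto\mathcal W_2(\delta_\nu,\Omega^n)$---is exactly how the paper phrases and uses it.

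The gap is in the precompactness step, and you half-see it: the phrase ``which is precisely the tail of $\rho^n$'' is not correct, and the ``Main obstacle'' paragraph does not quite diagnose why. After applying Remark 3.9 of Agueh--Carlier and convexity, what you actually obtain is
$$\int_{\R^d\setminus B_R}\|x\|^2\,d\mu^n \;\le\; \sum_k P^n_k\int_{A^n_k}\|x\|^2\,d\rho^n_k,\qquad A^n_k:=\nabla(\psi^n_k)^*\big(\R^d\setminus B_R\big),$$
and $A^n_k$ is \emph{not} a ball complement---it is the preimage under the optimal map, and can lie anywhere. So the right-hand side is not the tail of $\rho^n$; it is an integral over a set whose only usable feature is its small $\rho^n_k$-mass, namely $\rho^n_k(A^n_k)=\mu^n(\R^d\setminus B_R)\le C/R^2$ (the weak quadratic-decay bound one must first prove separately, as the paper does). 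The paper then closes the argument by maximizing $\int\|x\|^2\,d\tilde\rho$ over \emph{all} submeasures $\tilde\rho\le\rho^n_k$ of mass at most $C/R^2$, aggregating over $k$ into submeasures of $\rho^n$, transporting these forward to $\rho$ via the optimal map $T^n$ (costing at most $W_2(\rho^n,\rho)\to0$), and finally using finiteness of $\rho$'s second moment so that the worst-case $O(C/R^2)$-mass submeasure of the \emph{fixed} limit $\rho$ has second moment going to $0$ as $R\to\infty$. That chain of reductions---small-mass restriction $\rightarrow$ worst-case submeasure $\rightarrow$ transport to $\rho$ $\rightarrow$ tail of $\rho$---is the genuine content of the lemma, and your sketch skips it. Your alternative escape hatch (citing an existing consistency theorem for Wasserstein barycenters, in the spirit of Le~Gouic--Loubes) would be a legitimate shortcut if a version applicable to this infinite-support setting with $W_2$-convergent $\Omega^n$ is on hand, but as written it is not a proof.

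A secondary remark: your tightness argument via $\E_{\mu^n}\|x\|^2\le\E_{\rho^n}\|x\|^2$ is clean and slightly more elementary than the paper's (which goes through the lower bound $I_c(\rho,\mu^n)\le c^n+W_2^2(\Omega^n,\Omega)$ and Chebyshev); however the paper's route is not wasted, because that same bound is what furnishes the quantitative quadratic decay $\mu^n(\R^d\setminus B_R)\le C/R^2$ needed for the uniform-integrability step, whereas your bound only gives plain tightness.
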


\begin{proof}
First, the total transport cost from $\Omega^n$ to its barycenter $\mu^n$ can be computed through
$$c^n := \int_{P_2(\R^d)} W_2^2(\eta,\mu^n) d\Omega^n(\eta) = W_2^2(\delta_{\mu^n},\Omega^n),$$
where the second $W_2$ belongs to $P_2(P_2(\R^d))$ and $\delta_{\mu^n}$ is the Dirac measure on $\mu^n$. Then, for any $n,m$,
\begin{align*}
c^n &\leq W_2^2(\delta_{\mu^m},\Omega^n) \text{ since $\mu^n$ minimizes total transport cost}\\
&\leq c^m + W_2^2(\Omega^n,\Omega^m) \text{ by triangle-inequality}\\
\to & |c^n-c^m| \leq W_2^2(\Omega^n,\Omega^m).
\end{align*}
Since $W_2^2(\Omega^n,\Omega)\to 0$, the difference $W_2^2(\Omega^n,\Omega^m)\to 0$ as $n,m\to\infty$, so $|c^n-c^m|\to 0$. It follows that $\{c^n\}$ is a Cauchy sequence and thus converges.

Next, we establish some uniform bound on the decay of $\{\mu^n\}$ at infinity. We begin with a weak bound: by Lemma \ref{lemma: barycenter problem lower bound} and triangle inequality,
\begin{align*}
W_2^2(\rho(x),\mu^n) &\leq W_2^2(\Omega,\delta_{\mu^n})\\
&\leq c^n + W_2^2(\Omega^n,\Omega).
\end{align*}
Denote by $\delta_0 \in P(\R^d)$ the Dirac measure at $0$ and by $B_R \subseteq \R^d$ the open ball centered at $0$ with radius $R$. By the triangle inequality, for any $R$,
\begin{align*}
W_2(\rho(x),\mu^n) &\geq |W_2(\mu^n,\delta_0)-W_2(\rho(x),\delta_0)|\\
& \geq \sqrt{\int_{\R^d-B_R} R^2 d\mu^n} - \sqrt{\int_{\R^d} \|x\|^2 d\rho(x)}\\
& \geq R \sqrt{\mu^n(\R^d-B_R)} - \sqrt{\E_{\rho(x)}[X^2]}.
\end{align*}
Combining the two inequalities, we obtain for any $n$ and $R$,
\begin{equation*}
\mu^n(\R^d-B_R) \leq \Big( \frac{\sqrt{\E_{\rho(x)}[X^2]} + \sqrt{c^n + W_2^2(\Omega^n,\Omega)}}{R} \Big)^2.
\end{equation*}
Since the second moment $\E_{\rho(x)}[X^2]$ is finite by (\ref{finite second moment}) and $c^n, W_2^2(\Omega^n,\Omega)$ are convergent sequences, there exists some constant $C$ large enough so that
\begin{equation}
\label{quadratic decay}
\sup_n \mu^n(\R^d-B_R) \leq \frac{C}{R^2}.
\end{equation}

An immediate consequence is that $\mu^n$ is uniformly tight. Then, Prokhorov's theorem implies that $\{\mu^n\}$ has a subsequence $\{\mu^{n_i}\}$ that converges weakly to some limit $\mu \in P(\R^d)$. By Kantorovich duality \cite{villani2008optimal}, the optimal transport cost $W_2^2(\cdot,\cdot)$ on $P(\R^d)$ can be expressed as a supremum over bounded continuous functions, and thus is lower semi-continuous in the topology of weak convergence of $P(\R^d)$. Thus,
$$\forall \eta \in P(\R^d), ~W_2^2(\mu,\eta) \leq \liminf_{n_i\to\infty} W_2^2(\mu^{n_i},\eta).$$
In particular, by setting $\eta=\delta_0$, we have shown that $\mu$ has finite second moment: $\mu \in P_2(\R^d)$.\\

Now we prove that this subsequence $\mu^{n_i}$ converges to $\mu$ in the stronger topology of $(P_2(\R^d),W_2)$, and we use bootstrapping to improve the tail bound (\ref{quadratic decay}). Condition (ii) of Theorem 7.12 of \cite{villani2003topics} indicates that it is necessary and sufficient to prove that
\begin{equation}
\label{tightness condition}
\lim_{R\to\infty}\limsup_{n_i\to\infty} \int_{\R^d-B_R} \|x\|^2 d\mu^{n_i}(x) = 0.
\end{equation}
We show that this condition holds for the whole sequence $\mu^n$.

Recall the arguments of Lemma \ref{lemma: variance decomposition discrete}: Since $\rho_k^{n},\mu^{n}$ are absolutely continuous for all $n$ and $1\leq k \leq K^{n}$, Brennier's theorem implies that the optimal transport maps $\nabla\psi_k^n,\nabla(\psi_k^n)^*$ between them are invertible almost everywhere and, Remark 3.9 of \cite{agueh2011barycenters} indicates that $\sum P_k \nabla (\psi_k^n)^* = Id$. Then, $\mu^{n}$-almost all $x$ can be expressed as the convex combination $x = \sum P_k^n \nabla (\psi_k^n)^*(x)$. It follows from convexity that
$$\|x\|^2 \leq \sum_{k=1}^{K^n} P_k^n \|\nabla (\psi_k^n)^*(x)\|^2.$$
Then, $\nabla (\psi_k^n)^*\#\mu^n = \rho_k^n$ implies that,
\begin{align}
\int_{\R^d-B_R} \|x\|^2 d\mu^{n}(x) &\leq \sum_{k=1}^{K^n} P_k^n \int_{\R^d-B_R} \|\nabla (\psi_k^n)^*(x)\|^2 d\mu^{n}(x) \nonumber \\
\label{separate to rho nk}
&= \sum_{k=1}^{K^n} P_k^n \int_{\nabla (\psi_k^n)^*(\R^d-B_R)} \|x\|^2 d\rho^{n}_k(x).
\end{align}
In the last line above, we are integrating the measure $\rho^{n}_k$ restricted to the domain $\nabla(\psi_k^n)^*(\R^d-B_R)$. Equivalently, we are integrating over some measure $\tilde{\rho}_k^n$ (not necessarily a probability measure) such that $0\leq \tilde{\rho}_k^n \leq \rho_k^n$ (setwise) and
$$\tilde{\rho}_k^n(\R^d) = \rho_k^n\big( \nabla(\psi_k^n)^*(\R^d-B_R) \big) = \mu^n(\R^d-B_R).$$
For convenience, for any $R, n$ and $1\leq k \leq K^n$, define the following collections of measures:
$$M_k^n(R) := \{\tilde{\rho}_k^n \in M^+(\R^d), ~\tilde{\rho}_k^n \leq \rho_k^n \text{ and } \tilde{\rho}_k^n(\R^d) \leq C/R^2\}$$
$$M^n(R) := \{\tilde{\rho}^n \in M^+(\R^d), ~\tilde{\rho}^n \leq \rho^n \text{ and } \tilde{\rho}^n(\R^d) \leq C/R^2\}$$
$$M(R) := \{\tilde{\rho} \in M^+(\R^d), ~\tilde{\rho} \leq \rho \text{ and } \tilde{\rho}(\R^d) \leq C/R^2\},$$
where $M^+(\R^d)$ is the set of nonnegative Borel measures, and the uniform upper bound $C/R^2$ comes from (\ref{quadratic decay}).

It follows that the restriction of $\rho^{n}_k$ to $\nabla(\psi_k^n)^*(\R^d-B_R)$ belongs to $M_k^n(R)$ and
\begin{equation}
\label{transfer to rho nk}
\int_{\nabla (\psi_k^n)^*(\R^d-B_R)} \|x\|^2 d\rho^{n}_k(x) \leq \sup_{\tilde{\rho}_k^n \in M_k^n(R)} \int_{\R^d} \|x\|^2 d\tilde{\rho}_k^n(x).
\end{equation}

Given any choice of $\{\tilde{\rho}_k^n\}_{k=1}^{K^n}$, it is straightforward to show that the weighted sum
$$\tilde{\rho}^n := \sum_{k=1}^{K^n} P_k^n \tilde{\rho}_k^n$$
belongs to $M^n(R)$ and
\begin{equation}
\label{merge to rho n}
\sum_{k=1}^{K^n} P_k^n
\sup_{\tilde{\rho}_k^n \in M_k^n(R)} \int \|x\|^2 d\tilde{\rho}_k^n(x)
\leq \sup_{\tilde{\rho}^n \in M^n(R)} \int \|x\|^2 d\tilde{\rho}^n.
\end{equation}

For any $n$, since $\rho^n$ is absolutely continuous, Brennier's theorem implies that there is an optimal transport map $T^n$ such that $T^n\#\rho^n = \rho$. Given any $\tilde{\rho}^n \in M^n(R)$, it is straightforward to show that $\tilde{\rho} := T_n\# \tilde{\rho} \in M(R)$ and that the optimal transport cost
$$W_2(\tilde{\rho}^n,\tilde{\rho}) \leq W_2(\rho^n,\rho).$$
Denote by $\tilde{\delta}_0$ the Dirac measure at zero with the same mass as $\tilde{\rho}^n$. By the triangle inequality,
\begin{align}
\label{move to rho}
\begin{split}
\sqrt{\int \|x\|^2 d\tilde{\rho}^n} &= W_2(\tilde{\rho}^n,\tilde{\delta}_0)
\leq W_2(\tilde{\rho},\tilde{\delta}_0) + W_2(\tilde{\rho},\tilde{\rho}^n)\\
&\leq \sqrt{\int \|x\|^2 d\tilde{\rho}} + W_2(\rho,\rho^n)
\end{split}
\end{align}

Combining the inequalities (\ref{separate to rho nk}), (\ref{transfer to rho nk}), (\ref{merge to rho n}), (\ref{move to rho}), we obtain
\begin{equation}
\label{barycenter vanishing upper bound}
\int_{\R^d-B_R} \|x\|^2 d\mu^{n}(x) 
\leq 
\Big[ \sup_{\tilde{\rho} \in M(R)}
\sqrt{\int_{\R^d} \|x\|^2 d\tilde{\rho}} + W_2(\rho,\rho^n) \Big]^2
\end{equation}
for all $R$ and $n$.

Now, we show that the $\sup_{M(R)}$ term in (\ref{barycenter vanishing upper bound}) vanishes as $R\to\infty$. Recall the definition of $M(R)$: any $\tilde{\rho} \in M(R)$ can be seen as a measure obtained from $\rho$ by removing $(1-C/R^2)$-amount of mass. Since the integrand $\|x\|^2$ is strictly increasing in the radial direction, in order to maximize $\int \|x\|^2 d\tilde{\rho}$, we should first remove the mass of $\rho$ that is closest to $0$. To formalize this intuition, we define
\begin{equation*}
r(R) := \inf \{r\geq 0 ~|~ \rho(\R^d-B_r) \leq C/R^2\}
\end{equation*}
where $B_r$ is the open ball. It follows that
\begin{equation}
\label{upper bound by removing mass}
\sup_{\tilde{\rho} \in M(R)} \int_{\R^d} \|x\|^2 d\tilde{\rho} \leq \int_{\R^d-B_{r(R)}} \|x\|^2 d\rho
\end{equation}
where the upper bound is always finite, since (\ref{finite second moment}) implies that $\rho(x)$ has finite second moment. Then, there are two possibilities: First, suppose that $\rho(x)$ is compactly-supported, say, inside some ball $B_{R_0}$. Then, we obtain the trivial bound
\begin{equation*}
\sup_{\tilde{\rho} \in M(R)} \int_{\R^d} \|x\|^2 d\tilde{\rho} \leq R_0^2 \cdot \frac{C}{R^2}
\end{equation*}
which goes to $0$ as $R \to \infty$.
Second, $\rho(x)$ has unbounded support, and thus $\rho(\R^d-B_r) > 0$ for all $r$. So the function $r(R)$ must go to infinity as $R\to \infty$. It follows that the upper bound (\ref{upper bound by removing mass}) goes to zero.

Hence, we always have
\begin{equation*}
\lim_{R\to\infty}
\sup_{\tilde{\rho} \in M(R)} \sqrt{ \int_{\R^d} \|x\|^2 d\tilde{\rho} }
= 0
\end{equation*}
Meanwhile, Lemma \ref{lemma: x margin convergence} indicates that $W_2(\rho,\rho^n)\to 0$ as $n\to\infty$. Then, condition (\ref{tightness condition}) follows from (\ref{barycenter vanishing upper bound}):
\begin{equation*}
\lim_{R\to\infty}\limsup_{n\to\infty}
\int_{\R^d-B_R} \|x\|^2 d\mu^{n}(x) 
\leq 
\Big[ \lim_{R\to\infty} \sup_{\tilde{\rho} \in M(R)}
\sqrt{\int_{\R^d} \|x\|^2 d\tilde{\rho}} + \limsup_{n\to\infty} W_2(\rho,\rho^n) \Big]^2 = 0,
\end{equation*}
and we conclude that the subsequence $\mu^{n_i}$ converges to $\mu$ in $W_2$.\\

Finally, we show that $\mu$ is a barycenter of $\Omega$. For any $\tilde{\mu} \in P_2(\R^d)$ and any $n_i$,
\begin{equation}
\label{barycenter inequality}
c^{n_i} = W_2^2(\Omega^{n_i},\delta_{\mu^{n_i}}) \leq W_2^2(\Omega^{n_i},\delta_{\tilde{\mu}}).
\end{equation}
Since $W_2(\Omega,\Omega^{n})\to 0$ and $W_2(\delta_{\mu},\delta_{\mu^{n_i}}) = W_2(\mu,\mu^{n_i}) \to 0$, the cost $W_2^2(\Omega^{n_i},\delta_{\mu^{n_i}})$ converges to $W_2^2(\Omega,\delta_{\mu})$ by triangle inequality. Then, (\ref{barycenter inequality}) implies that
$$W_2^2(\Omega,\delta_{\mu}) \leq \lim_{n_i\to\infty} W_2^2(\Omega^{n_i},\delta_{\tilde{\mu}}) = W_2^2(\Omega,\delta_{\tilde{\mu}}).$$
Since the inequality holds for all $\tilde{\mu}\in P_2(\R^d)$, the limit $\mu$ is a barycenter of $\Omega$.
\end{proof}

Now, we can take the limit in $n$ in equation (\ref{finitely-supported decomposition}). Taking a subsequence if necessary, Lemma \ref{lemma: x margin convergence} and Lemma \ref{lemma: barycenter convergence} imply that $\rho^n \to \rho$ and $\mu^n \to $ a barycenter $\mu$ in $W_2$, while the functions $Var$ and $W_2^2$ are continuous over $W_2$, so
\begin{equation*}
Var(\rho) = Var(\mu) + W_2^2(\Omega,\delta_{\mu})
\end{equation*}
which finishes the proof of formula (\ref{variance decomposition continuous}).

\section{Proof of Theorem \ref{thm: Gaussian covariance}}
\label{appendix: Gaussian covariance}

We apply the arguments of Appendix \ref{appendix: variance decomposition}. Since $\rho(x)$ is assumed to have finite second moment, $\rho(x,z)$ can be converted to a distribution $\Omega \in P_2(P_2(\R^d))$. We seek to construct a sequence of finitely-supported measures $\Omega^n$ that converge to $\Omega$ in $W_2$, such that each $\Omega^n = \sum_{k=1}^{K^n} P_k \delta_{\rho_k^n}$ and each $\rho_k^n$ is a non-degenerate Gaussian (i.e. the covariance $S(z)$ is positive-definite). Fix some $\rho_0 \in \text{supp } \Omega$. For each $n$, let $C^n \subseteq P_2(\R^d)$ be a compact subset such that
$$\int_{P_2(\R^d)-C^n} W_2^2(\rho_0,\eta) d\Omega(\eta) < \frac{1}{n}.$$
Let $\{B(\rho_k^n,1/2n)\}_{k=1}^{K^n}$ be a finite cover of $C^n$ by open balls, where $\rho_k^n \in \text{supp } \Omega$ and thus are Gaussians. If $\rho_k^n$ is degenerate, then replace it with some non-degenerate Gaussian $\tilde{\rho}_k^n \in B(\rho_k^n,1/2n)$ and use the ball $B(\tilde{\rho}_k^n,1/n)$. Define the disjoint cover $\{U_k^n\}_{k=1}^{K^n}$ by
$$U_k^n = B(\rho_k^n,1/n) - \bigcup_{h=1}^{k-1} B(\rho_h^n,1/n).$$
Define a map $F^n$ on $P_2(\R^d)$ that sends each $U_k^n$ to $\rho_k^n$ and everything else to $\rho_0$. Define
$$\Omega^n = F^n\#\Omega = \sum_{k=1}^{K^n}\Omega(U_k^n) \delta_{\rho_k^n} + \Big( 1 - \sum_{k=1}^{K^n}\Omega(U_k^n) \Big) \delta_{\rho_0}.$$
It follows that
$$W_2^2(\Omega,\Omega^n) \leq \int W_2^2(\eta,F^n(\eta)) d\Omega(\eta) < \frac{2}{n}$$
So $\Omega^n \to \Omega$ in $W_2$.

Denote $\rho_k^n$ by $\mathcal{N}(\overline{x}_k^n, S_k^n)$, and denote the $X$-marginal of $\Omega^n$ by $\rho^n = \sum P_k \rho_k^n$. Now, Theorem 6.1 of \cite{agueh2011barycenters} implies that $\Omega^n$ has a unique barycenter $\mu^n$, which is a Gaussian whose covariance $S^n$ satisfies
\begin{equation}
\label{finite Gaussian formula}
S^n = \sum_{k=1}^{K^n} P_k \sqrt{\sqrt{S^n} S_k^n \sqrt{S^n}} = \int \sqrt{\sqrt{S^n} S(\eta) \sqrt{S^n}} d\Omega^n(\eta),
\end{equation}
where $S(\rho)$ denotes the covariance of $\rho$. Also, the argument (\ref{barycenter mean argument}) implies that the mean $\overline{x}^n$ of $\mu^n$ satisfies
$$\overline{x}^n = \sum P_k \overline{x}_k^n = \E_{\rho^n(x)}[x].$$

Lemma \ref{lemma: x margin convergence} implies that $\rho^n$ converges to the marginal $\rho(x)$ in $W_2$, and Taking a subsequence if necessary, Lemma \ref{lemma: barycenter convergence} implies that $\mu^n$ converges to a barycenter $\mu$ of $\Omega$ in $W_2$. Since the set of Gaussian distributions is closed in $W_2$, this barycenter $\mu$ must be some Gaussian $\mathcal{N}(\overline{x},S)$. By Theorem 7.12 of \cite{villani2003topics}, the covariance function $S(\eta)$ is continuous over $\eta \in (P_2(\R^d),W_2)$, so $S^n$ converges to $S$. Then, we can take the limit on both sides of (\ref{finite Gaussian formula}):
\begin{align*}
S &= \lim_{n\to\infty} S^n = \lim_{n\to\infty} \int \sqrt{\sqrt{S} S(\eta) \sqrt{S}} d\Omega^n(\eta) + O\big( \sqrt{\|S-S^n\|_{op}} \big) \cdot \int \sqrt{S(\eta)} d\Omega^n(\eta)\\
& = \int \sqrt{\sqrt{S} S(\eta) \sqrt{S}} d\Omega(\eta).
\end{align*}
Similarly,
\begin{align*}
\overline{x} &= \lim_{n\to\infty} \int x d\rho^n(x) = \int x d\rho(x).
\end{align*}

Finally, suppose that the set of $z\in Z$ such that $\rho(x|z)$ is a non-degenerate Gaussian (and thus, absolutely continuous) has positive measure. Lemma 3.2.1 of \cite{pass2013optimal} implies that for each such $\rho(x|z)$, the optimal transport cost $\mu\mapsto W_2^2(\mu,\rho(x|z))$ is strictly convex. Then, the total transport cost $\mu \mapsto \int W_2^2(\mu,\rho(x|z))dv$ is strictly convex and the barycenter is unique.

\section{Saddle point algorithms}
\label{appendix: saddle point algorithms}

Let $\inf_{\tau} \sup_{\xi} L(\tau,\xi)$ be a min-max problem. For convenience, denote
\begin{equation*}
J = 
\begin{pmatrix}
I_{dim \tau} & \\
 & -I_{dim \xi}
\end{pmatrix},
~ w = \begin{bmatrix} \tau\\ \xi \end{bmatrix}
\end{equation*}

Then, the OMD algorithm \cite{mertikopoulos2019optimistic} (using Euclidean squared distance) becomes,

\medskip

\begin{algorithm}[H]
\textbf{Parameters: }{Learning rates $\eta^n$.}\\
\For{$n \gets 1,2,\dots$}{
    Compute the waiting state $\tilde{w} \gets w^n - \eta^n J \nabla L (w^n)$\\
    Actual update $w^{n+1} \gets w^n - \eta^n J \nabla L (\tilde{w})$
}
\Return $w^{\infty}$
\caption{Optimistic mirror descent}
\label{OMD algorithm}
\end{algorithm}

\medskip

\noindent
We present the QITD algorithm \cite{essid2019implicit} in its data-based setting, such that $L$ is estimated from samples, just as in (\ref{data-based barycenter objective}),

\medskip
\begin{algorithm}[H]
\textbf{Parameters: }{Iteration number $T$. Batch size $M$. Initial learning rate $\eta^0$. Decay rate $\gamma \in (0,1)$. Stopping threshold $\epsilon \ll 1$. Increase factor $\beta >0$. Maximum learning rate $\eta_{\max}$.}\\
\textbf{Data: }{Sample $X = \{x_i\}_{i=1}^N$.}\\
Initialize quasi Newton matrix $B^1 \gets J$.\\
\For{$n \gets 1$ \KwTo $T$}{
Randomly sample a minibatch $X^n$ of size $M$.\\
Compute gradient $g^n \gets \nabla L(w^n ~| X^n)$.\\
Initialize learning rate $\eta^n \gets \eta^{n-1}$.\\
Compute update $w^{n+1} \gets w^n - \eta^n B^n g^n$.\\
\While{$\eta^n > \epsilon \eta^{n-1}$ \text{\upshape and the anticipatory constraint}
\begin{equation}
\label{anticipatory constraint}
L(\tau^{n+1}, \xi^n ~| X^n) \leq L(\tau^{n+1}, \xi^{n+1} ~| X^n) \leq L(\tau^n, \xi^{n+1} ~| X^n)
\end{equation}
\text{\upshape is not satisfied}}{
Line search $\eta^n \gets \gamma \eta^n$.\\
Update $w^{n+1} \gets w^n - \eta^n B^n g^n$.
} 
\If{\text{\upshape constraint (\ref{anticipatory constraint}) has been satisfied}}{
Increase learning rate $\eta^n \gets \max( (1+\beta) \eta^n, \eta_{\max} )$
}
Compute new gradient $g^{n+1} \gets \nabla L(w^{n+1} ~| X^n)$.\\
Rank-one update of $B^n$:
\begin{align*}
s &\gets J g^{n+1} - B^n g^n\\
\alpha &\gets \frac{\|s\|^2}{\lb g^n, s \rb}\\
\alpha &\gets sign(\alpha) \min\big(|\alpha|,1\big)\\
B^{n+1} &\gets B^n + \alpha \frac{s \cdot s^T}{\|s\|^2}
\end{align*}
}
\Return $w^T$
\caption{Stochastic quasi implicit twisted descent}
\label{stochastic quasi implicit}
\end{algorithm}

The algorithm has time complexity $O(TD^2)$, where $D = dim \tau + \dim \xi$. With fixed decay rate $\gamma$ and threshold $\epsilon$, the anticipatory constraint (\ref{anticipatory constraint}) contributes a multiplicative factor to running time. The quasi Newton matrix $B^n$ can be replaced by a list of its rank-one updates: $\{s^n,\alpha^n\}_{n=1}^T$, changing the time complexity to $O(T^2D)$.

\section{Implementation details}
\label{appendix: implementation}

All BaryNet models implemented in Section 4 are based on neural networks, and the network architectures are summarized below:

\begin{table}[h!]
\small
\centering
\begin{tabular}{||c | c c c c ||} 
 \hline
 Experiment & $z(x)$ & Transport residual & $\psi^Y$ & $\psi^Z$\\
 \hline\hline
 \S \ref{sec: artificial test} & - & $3 \to 7 \to 7 \to 2$ & $2\to 6\to 6\to 1$ & $1\to 5\to 1$\\
 \S \ref{sec: seismic test} climate & $56\to 5\to 5\to 1$ & $57\to 2\to 56$ & $56\to 14\to 1$  & $1\to 5\to 1$ \\
 \S \ref{sec: seismic test} seismic & $2\to 6\to 6\to 1$ & $3\to 7\to 7\to 7\to 2$ & $2\to 5\to 5\to 5\to 1$  & $1\to 5\to 5\to 1$ \\
 \S \ref{sec: climate test} & - & $5\to 9\to 9\to 1$ & $1\to 5\to 5\to 1$ & $4\to 8\to 1$ \\
 \S \ref{sec: color transfer} & - & $3 \to 25\to 3 \to 25 \to 3$ & $3 \to 25\to 3 \to 25 \to 1$  & $\R^3$ \\
 \hline
\end{tabular}
\caption{BaryNet architectures for experiments in Section \ref{sec: test results}. ``Transport residual" refers to the residual part of the transport map (See Section \ref{sec: transport net}), and the inverse transport maps have the same architecture. Unless specified in the comments below, all activation functions are ReLU.}
\label{table: BaryNet architecture}
\end{table}

The training schemes are based on either OMD or QITD, and their parameters are summarized below:

\begin{table}[h!]
\small
\centering
\begin{tabular}{||c | c c c c c c c c ||} 
 \hline
 Experiment & Algorithm & $T$ & $M$ & $\mu^0$ & $\gamma$ & $\epsilon$ & $\beta$ & $\eta_{\max}$ \\
 \hline\hline
 \S \ref{sec: artificial test} & QITD & $10^4$ & full & $4\times 10^{-3}$ & $0.75$ & $10^{-3}$ & $0.1$ & $2\times 10^{-2}$ \\
 \S \ref{sec: seismic test} climate & OMD & $5\times 10^4$ & $10^3$ & $10^{-5}$ & - & - & - & - \\
 \S \ref{sec: seismic test} seismic & OMD & $2\times 10^4$ & full & $10^{-3}$ & - & - & - & - \\
 \S \ref{sec: climate test} & QITD & $10^4$ & $2400$ & $2\times 10^{-3}$ & $0.75$ & $10^{-3}$ & $0.1$ & $10^{-2}$ \\
 \S \ref{sec: color transfer} & OMD & $10^6$ & $10^3$ & $10^{-3}$ & - & - & - & - \\
 \hline
\end{tabular}
\caption{Training parameters. The notations are based on Appendix \ref{appendix: saddle point algorithms}: For QITD, we have iteration number $T$, batch size $M$, initial learning rate $\eta^0$, decay rate $\gamma$, stopping threshold $\epsilon$, increase factor $\beta$, and maximum learning rate $\eta_{\max}$. For OMD, the learning rate is fixed $\mu \equiv \mu^0$. If $M=$full, then gradient descent is based on the population loss over the entire sample, instead of the minibatch loss.}
\label{table: training parameters}
\end{table}

The training of the inverse transport map (\ref{inverse transport by regression}) are based on either SGD or Adam:

\begin{table}[h!]
\small
\centering
\begin{tabular}{||c | c c c c ||} 
 \hline
 Experiment & Algorithm & $T$ & $M$ & $\mu$ \\
 \hline\hline
 \S \ref{sec: artificial test} & SGD & $2\times 10^4$ & full & $5\times 10^{-2}$ \\
 \S \ref{sec: climate test} & SGD & $10^4$ & $24 \times 365$ & $5\times 10^{-3}$ \\
 \S \ref{sec: color transfer} & Adam & $10^4$ & $3000$ & $10^{-3}$ \\
 \hline
\end{tabular}
\caption{Training parameters: iteration number $T$, batch size $M$, and learning rate $\eta$. We did not use weight decay or momentum.}
\label{table: training the inverse}
\end{table}

For table \ref{table: BaryNet architecture}, if the latent variable $z$ is continuous, then the transport map has the form $T(x,z):\R^{d+k}\to\R^d$. If $z$ is discrete, then we model each $T_z(x): \R^d \to \R^d$ by networks with the same architecture. Regarding $\psi^Z$ and $z(x)$, the affine map at the last layer is always bias-free, in order to reduce unnecessary parameters. The reason is that any additive constant in $\psi^Z$ would be removed by (\ref{integral constraint for psi Z}), while the latent variables $z$ are invariant under permutation so a constant has no effect on the assignment $z(x)$ (Section \ref{sec: label net})\\

Here are the details of each experiment:
\begin{enumerate}
\item [\S \ref{sec: seismic test} climate] The daily temperature data is taken from NOAA \cite{NOAA2019daily}. We chose the 56 stations with the fewest missing values, and any $x_i$ with missing entry is discarded.

Training in high dimensions becomes more unstable, so we follow the technique introduced by the DCGAN paper \cite{radford2015unsupervised}: we applied batch normalization to all hidden layers of all networks and also applied leaky ReLU (with negative slope $0.1$) to the test functions $\psi^Y,\psi^Z$ and labeling function $z(x)$.

We have enforced Lipschitz continuity in $z(x)$, by clamping its parameters to be within $[-0.1,0.1]$ after each update step.

\item [\S \ref{sec: seismic test} seismic] The earthquake data is taken from \cite{USGS2008earthquake} and scaled by $\pi/180$ to yield spherical coordinates. We applied batch normalization to all hidden layers of $T(x,z),\psi^Y,\psi^Z,z(x)$ and also applied leaky ReLU (with negative slope $0.1$) to $\psi^Y,\psi^Z$ and $z(x)$.

\item [\S \ref{sec: climate test}] The hourly temperature data is taken from \cite{NOAA2019hourly}. We chose Ithaca, NY and the time range Jan 1 2007 to Dec 31 2016 because there are few missing values, which are filled by linear interpolation.

\item [\S \ref{sec: color transfer}] Each of the RGB color channels has range $[0,1]$. Theoretically, the transported distributions $S_j\circ T_k \# \rho_k$ are always supported in $[0,1]^3$. Nevertheless, the computed result is only an approximation to the true distribution, and sometimes a few points are mapped outside of $[0,1]^3$, so we project them back.

\end{enumerate}


\end{document}